\documentclass[10pt,twoside]{amsart}
\usepackage{latexsym,amssymb,graphicx}
\usepackage{mathrsfs}
\usepackage[all]{xy}
\newtheorem{theorem}{Theorem}[section]
\newtheorem{lemma}[theorem]{Lemma}
\newtheorem{corollary}[theorem]{Corollary}
\newtheorem{proposition}[theorem]{Proposition}

\theoremstyle{definition}
\newtheorem{definition}[theorem]{Definition}

\theoremstyle{remark}
\newtheorem{remark}[theorem]{Remark}
\numberwithin{equation}{section}

\newcommand{\Z}{\mathbb{Z}}
\newcommand{\A}{\mathbb{A}}

\newcommand{\Hom}{\mathrm{Hom}}


\newcommand{\scS}{\mathscr{S}}
\newcommand{\C}{\mathscr{C}}
\renewcommand{\H}{\mathscr{H}}
\newcommand{\ffi}{\varphi}
\newcommand{\colim}{\operatorname{colim}}

\newcommand{\Spec}{\operatorname{Spec}}
\newcommand{\GrO}{{GrO}}
\newcommand{\OGr}{{GrO}}

\newcommand{\Gr}{{Gr}}
\newcommand{\Sch}{\operatorname{Sch}}
\newcommand{\Sm}{\operatorname{Sm}}
\newcommand{\Sets}{\operatorname{Sets}}
\newcommand{\E}{\mathscr{E}}
\newcommand{\F}{\mathscr{F}}
\newcommand{\G}{\mathscr{G}}
\newcommand{\N}{\mathbb{N}}
\newcommand{\scP}{\mathscr{P}}
\renewcommand{\P}{\Bbb{P}}
\newcommand{\Iso}{\operatorname{Iso}}
\newcommand{\St}{\operatorname{St}}
\newcommand{\im}{\operatorname{Im}}
\newcommand{\Cat}{\operatorname{Cat}}
\newcommand{\hocolim}{\operatorname{hocolim}}
\newcommand{\catGrO}{\mathscr{G}rO}

\newcommand{\Qed}{\hfil\penalty-5\hspace*{\fill}$\square$}
\newcommand{\Sing}{\operatorname{Sing_{\bullet}^{\A^1}}}
\newcommand{\eps}{\varepsilon}
\newcommand{\PSh}{\operatorname{PSh}}
\newcommand{\GW}{\mathscr{G}W}

\newcommand{\I}{\mathcal{I}}
\newcommand{\IPS}{\mathcal{S}}
\newcommand{\rk}{\operatorname{rk}}
\newcommand{\catS}{\mathscr{S}}
\newcommand{\scrH}{\mathscr{H}}
\newcommand{\calE}{\mathcal{E}}
\newcommand{\diag}{\operatorname{diag}}
\newcommand{\calH}{\mathcal{H}}

\begin{document}
\bibliographystyle{alpha}

\title[Geometric models for Grothendieck-Witt groups]
{Geometric models for higher Grothendieck-Witt groups in $\A^1$-homotopy theory}

\author{M. Schlichting \and G. S. Tripathi}

\begin{abstract}
We show that the higher Grothendieck-Witt groups, a.k.a. algebraic hermitian $K$-groups, are represented by an infinite orthogonal Grassmannian in the $\mathbb{A}^1$-homotopy category of smooth schemes over a regular base for which $2$ is a unit in the ring of regular functions.
We also give geometric models for various $\P^1$- and $S^1$-loop spaces of hermitian $K$-theory.
\end{abstract}

\date{\today}

\maketitle

\tableofcontents

\section{Introduction}

For a regular noetherian separated scheme $S$ of finite Krull dimension,
denote by ${\H_{\bullet}(S)}$ the pointed unstable $\A^1$-homotopy category of smooth $S$-schemes, and by $[\phantom{12},\phantom{12}]$ or $[\phantom{12},\phantom{12}]_{\H_{\bullet}(S)}$ maps in that category \cite{MorelVoevodsky}.
A theorem of Morel and Voevodsky says that Quillen's algebraic $K$-theory is represented in $\H_{\bullet}(S)$ by $\Z \times BGL \sim \Z \times \Gr_{\bullet}$ 
where for a vector bundle $V$ on $S$, the scheme $Gr_d(V)$ denotes the Grassmannian scheme of $d$-planes in $V$, and $Gr_{\bullet} = \colim_nGr_n(O_S^n\oplus O_S^n)$ is the infinite Grassmannian over $S$.
More precisely \cite[Theorem 3.13, p. 140]{MorelVoevodsky}, for any smooth $S$-scheme $X$ there are natural isomorphisms for all $i\geq 0$ 
\begin{equation}
\label{eqn:MVresult}
K_i(X)\cong [X_+\wedge S^i,\Z\times\Gr_{\bullet}] \cong [X_+\wedge S^i,\Z \times  BGL].
\end{equation}
This is analogous to the fact that complex $K$-theory is represented in topology by $\Z\times BU$ and the infinite complex Grassmannian.

The purpose of this article is to prove a result analogous to (\ref{eqn:MVresult}) for the theory of
higher Grothendieck-Witt groups, a.k.a. algebraic hermitian $K$-groups \cite{karoubi:battelle}, extended to schemes in \cite{myMV}.
Our result has already been used in the work of \cite{AsokFaselCohClass} and \cite{zibrowius} and opens the door to a classification of unstable operations in Grothendieck-Witt theory as done in \cite{riou} for $K$-theory.
To state our main theorem, let $V=(V,\ffi)$ be an inner product space over $S$, that is, a vector bundle $V$ over $S$ equipped with a non-degenerate symmetric bilinear form $\ffi:V\otimes_SV \to O_S$, and let $\OGr_d(V)\subset \Gr_d(V)$ be the open subscheme, of the usual Grassmannian $\Gr_d(V)$ of $d$-planes in $V$, of
those subbundles of $V$ for which the from $\ffi$ restricts to a non-degenerate form.
Let $H_S$ be the hyperbolic plane over $S$, 
that is, the rank $2$ vector bundle $O_S^2$ equipped with the inner product $(x,y)\cdot (x',y') = xx'-yy'$.
We define the {\it infinite orthogonal Grassmannian} (over $S$) as the colimit of schemes
$$\OGr_{\bullet}= \colim_n\OGr_{2n}(H^n\perp H^n)$$
where the colimit is taken over the maps 
$$\OGr_{2n}(H^n\perp H^n) \to \OGr_{2n+2}(H^{n+1}\perp H^{n+1}): E \mapsto H\perp E.$$
Moreover, let $O = \colim_n O(H^n)$ be the infinite orthogonal group over $S$ where $O(V)$ denotes the group of isometries of an inner product space $V$.
Let $B_{et}O = \colim_n B_{et}O(H^n)$ be the etale classifying space of $O$ \cite[p. 130]{MorelVoevodsky}.
Finally, for a scheme $X$ with $\frac{1}{2}\in \Gamma(X,O_X)$
let $GW_i(X) = \pi_iGW(X)$ be the $i$-th higher Grothendieck-Witt group of $X$ (\cite[Definition 4.6]{myEx} with $\mathcal{L} = O_X$ and $\eps =1$).
For an affine scheme $X=\Spec A$ (with $\frac{1}{2}\in A$), these groups are Karoubi's hermitian $K$-groups of $A$ \cite[Remark 4.13]{myEx}.
Here is our main result.

\begin{theorem}
\label{thm:GWrep}
Let $S$ be a regular noetherian scheme of finite Krull dimension with $\frac{1}{2}\in \Gamma(S,O_S)$, and let $X$ be a smooth $S$-scheme.
Then there are natural isomorphisms 
$$GW_i(X)\cong [X_+\wedge S^i,\Z\times \OGr_{\bullet}]_{\H_{\bullet}(S)} \cong [X_+\wedge S^i,\Z \times B_{et}O]_{\H_{\bullet}(S)}.$$
\end{theorem}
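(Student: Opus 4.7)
The plan is to mimic the Morel--Voevodsky argument that gave (\ref{eqn:MVresult}), using as the hermitian analogues of ``vector bundle $\sim$ point of $BGL_n$'' the identification ``orthogonal summand $\sim$ point of $\OGr_d(V)$''. Concretely, I would first build a natural transformation
$$\Theta_X\colon [X_+\wedge S^i,\,\Z\times \OGr_\bullet]_{\H_\bullet(S)} \longrightarrow GW_i(X)$$
for $X\in\Sm/S$ by sending a map $X\to \OGr_{2n}(H^n\perp H^n)$ to the class in $GW_0(X)$ of the tautological non-degenerate subbundle, renormalised by $-n\cdot [H]$ to kill the shift; the $\Z$-factor then records the virtual rank. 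Because $GW$-theory satisfies Nisnevich descent and $\A^1$-invariance on regular schemes by Schlichting's localisation and homotopy theorems, both sides of $\Theta$ are $\A^1$-invariant Nisnevich sheaves of spectra on $\Sm/S$, so it suffices to show $\Theta_X$ is an isomorphism when $X=\Spec A$ is affine, and only in degree $i=0$, the higher degrees following from the loop space structure on both sides.

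For the affine case in degree zero, I would argue as follows. Every non-degenerate symmetric bilinear form $(E,\psi)$ over $A$ embeds as an orthogonal summand of some $H_A^n$ (choose any isometric embedding, which exists stably because $H$ absorbs all forms under $-\otimes H$ when $\tfrac12\in A$). This gives a well-defined point of $\OGr_\bullet(X)$ whose class under $\Theta$ is exactly $[E,\psi]-n[H]$, showing surjectivity of $\Theta_X$ after group completion. Injectivity reduces, via the standard Quillen-style group completion argument, to showing that two orthogonal summands of $H^n$ that become isometric after stabilisation are actually connected through an $\A^1$-path in $\OGr_\bullet$; this uses Whitehead's lemma for the orthogonal group (valid with $\tfrac12$ inverted) together with the $\A^1$-contractibility of the elementary orthogonal transformations, which is where the hypothesis $\tfrac12\in\Gamma(S,O_S)$ really enters.

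For the second isomorphism, I would identify the $\A^1$-homotopy type of $\OGr_\bullet$ with $B_{et}O$ by constructing, for each $n$, a zig-zag of $\A^1$-weak equivalences between the degree $2n$ component $\OGr_{2n}(H^n\perp H^n)$ and $B_{et}O(H^n)$. The map in one direction comes from the $O(H^n)$-torsor on $\OGr_{2n}(H^n\perp H^n)$ of isometries from $H^n$ to the tautological subbundle (well-defined after restricting to the connected component where the subbundle is isometric to $H^n$); up to $\A^1$-homotopy this exhausts all components by the stabilisation above. One then passes to the colimit and compares with $\colim_n B_{et}O(H^n)=B_{et}O$.

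The hardest step should be the injectivity half of the degree zero calculation: proving that two subbundles of $H^N$ whose images in $GW_0(A)$ coincide are joined by an $\A^1$-chain of orthogonal embeddings into some $H^M$. This is the hermitian analogue of the classical observation that stably isomorphic subspaces of $k^N$ become equal after passing to $k^M$, and its proof will require an explicit $\A^1$-homotopy of orthogonal transformations built out of the elementary matrices $\exp(t\cdot e)$ for a nilpotent skew-symmetric $e$ and $t\in\A^1$. Once this is established, the spectrum-level comparison for $i>0$ is formal from the fact that both $\Z\times\OGr_\bullet$ and the zeroth space of the Grothendieck--Witt spectrum deloop via the same $S^1$- and $\P^1$-loop structure supplied by the hermitian analogue of Quillen's $+=Q$ construction.
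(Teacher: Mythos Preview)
Your reduction to degree $i=0$ is where the argument breaks. You assert that ``the higher degrees follow from the loop space structure on both sides'' and later that ``both $\Z\times\OGr_\bullet$ and the zeroth space of the Grothendieck--Witt spectrum deloop via the same $S^1$- and $\P^1$-loop structure.'' But $\Z\times\OGr_\bullet$ is, a priori, just a presheaf of sets (an ind-scheme), not an infinite loop object, and nothing in your proposal equips it with a delooping. The $+ = Q$ style machinery applies to $GW$, not to $\OGr_\bullet$; you cannot borrow the loop structure from the target before you have proved the spaces are equivalent. Consequently, even a correct identification of $\pi_0$ on affines would not propagate to $i>0$, and the claim that ``both sides of $\Theta$ are $\A^1$-invariant Nisnevich sheaves of \emph{spectra}'' is unjustified on the $\OGr_\bullet$ side.

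This is exactly the point the paper has to work for. The paper does not compare $\pi_0$'s and bootstrap; instead it builds actual maps of simplicial presheaves
$\GrO_\bullet \to \IPS_\bullet \simeq B_{et}O$ and $\catS_\bullet \to \widetilde{\GW}$
and proves they are weak equivalences of simplicial \emph{sets} when evaluated at $\Delta R$ (Theorem~\ref{thm:simplHtpyEq}). The two substantive inputs are: (i) contractibility of the Stiefel spaces $\St(V,V\perp H^\infty_{\Delta R})$ (Proposition~\ref{prop:StContract}), which gives $\GrO_V(V\perp H^\infty_{\Delta R})\simeq BO(V_{\Delta R})$; and (ii) an explicit action of a ``linear isometries'' $E_\infty$-operad on $\GrO_\bullet(\Delta R)$ and $\IPS_\bullet(\Delta R)$ (Proposition~\ref{prop:GrOEinf}) so that the comparison maps are maps of group-complete $E_\infty$-spaces, after which the Group Completion Theorem finishes the job. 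Your injectivity sketch via $\A^1$-chains of elementary orthogonal matrices is morally the $\pi_0$ shadow of (i)--(ii), but it cannot replace them: you need the $E_\infty$-structure (or some equivalent delooping input) to pass from a homology isomorphism to a weak equivalence and to say anything about $i>0$.
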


The proof of the $K$-theory analog of Theorem \ref{thm:GWrep} has two steps.
The first consists in showing that the $K$-theory presheaf $K$ is homotopy invariant and satisfies the Nisnevich Brown-Gersten property.
Both statements follow from Quillen's work \cite{quillen:higherI} and they imply $K_i(X)\cong [X_+\wedge S^i,K]$.
In the second step, one constructs explicit $\A^1$-weak equivalences $\Z\times Gr_{\bullet}\sim_{\A^1}\Z\times BGL \sim_{\A^1} K$.
This was done in \cite{MorelVoevodsky}; see also Remark \ref{CounterexToMorelVoev}.

For higher Grothendieck-Witt theory, the first step was proved by Hornbostel for affine schemes in \cite{hornbostel:A1reps}. 
The extension to non-affine schemes follows from \cite{myMV} 
and is also proved in \cite[Theorems 9.6, 9.8]{myGWDG}.
Thus, $[X_+\wedge S^i,GW]\cong GW_i(X)$. 
Also, it is known from \cite{PaninWalterBO} that $B_{et}O\cong GrO_{\bullet}$ in $\H_{\bullet}(S)$; we give an alternative proof of a more precise version in Proposition \ref{prop:BetGrOdot}.

Denote by $\underline{\Z}$ the constant sheaf $\Z$.
For a ring $R$, denote by $\Delta R$ the standard simplicial ring $n \mapsto \Delta^{n} R=R[T_0,...,T_n]/\langle T_0+\cdots +T_n -1\rangle$.
Theorem \ref{thm:GWrep} is a consequence of the following which is proved in
Theorem \ref{ZBetGsiKG} and Proposition \ref{prop:BetGrOdot} below.

\begin{theorem}
\label{thm:GrO=KOuptoA1}
Let $S$ be a regular noetherian separated scheme of finite Krull dimension with $\frac{1}{2}\in \Gamma(S,O_S)$.
Then there are maps of simplicial presheaves on smooth $S$-schemes
$$\underline{\Z} \times \GrO_{\bullet} \to \underline{\Z} \times B_{et}O \to GW$$
which are weak equivalences of simplicial sets when evaluated at $\Delta R$ 
for any smooth affine $S$-scheme $\Spec R$.
In particular, these maps are isomorphisms in $\scrH_{\bullet}(S)$.
\end{theorem}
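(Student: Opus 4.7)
The plan is to establish the two arrows separately, corresponding to the factorization suggested by the statement, and then to verify that each is a sectionwise weak equivalence on $\Delta R$. For the first map $\underline{\Z} \times \GrO_\bullet \to \underline{\Z} \times B_{et}O$ (Proposition \ref{prop:BetGrOdot}), I would use the universal rank-$2n$ subbundle on $\GrO_{2n}(H^n \perp H^n)$, which inherits a non-degenerate form from the ambient $H^n\perp H^n$ and hence is classified by a canonical map $\GrO_{2n}(H^n\perp H^n) \to B_{et}O(2n)$. These maps are compatible with the stabilization $E\mapsto H\perp E$ on the left and with $O(2n)\hookrightarrow O(2n+2)$ on the right, so they assemble into $\GrO_\bullet \to B_{et}O$. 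The factor $\underline{\Z}$ is added by a rank shift, interpreting both sides as classifying virtual inner product bundles of arbitrary rank.

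To show this map is a weak equivalence on $\Delta R$-points, I would exploit the homogeneous-space description $\GrO_{2n}(H^N)\cong O(H^N)/(O(2n)\times O(H^{N-n}))$ at the presheaf level. The key input is that for a smooth affine scheme $\Spec R$ with $1/2\in R$, every non-degenerate inner product bundle of fixed rank on $\Delta^k R$ becomes, after orthogonal sum with sufficiently many hyperbolics, an isometric direct summand of some $H^N$; this uses Witt cancellation together with the fact that stably every form is hyperbolic when $1/2$ is available. That reduces the weak equivalence statement to level-wise contractibility of the fibers of $O(H^N)/O(H^{N-n}) \to B_{et}O(2n)$, which is classical.

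For the second map $\underline{\Z}\times B_{et}O\to GW$ (Theorem \ref{ZBetGsiKG}), I would invoke the group completion theorem applied to the symmetric monoidal groupoid of inner product bundles. Its classifying space is $\coprod_n B_{et}O(H^n)$, whose group completion is $\Z\times B_{et}O^+$ via the plus construction on the infinite $B_{et}O$. A natural map to $GW$ can be produced by sending an inner product bundle to its class as a $0$-simplex in the hermitian $S_\bullet$-construction model of \cite{myEx,myGWDG}. On $\Delta R$-points the identification then specialises to the classical comparison $\Omega GW(\Delta^k R)\simeq \Z\times BO(\Delta^k R)^+$, combined with the observation that on such affine sections $B_{et}O$ and the ordinary $BO^+$ coincide, since etale $O(n)$-torsors that underlie inner product bundles on a local ring become Zariski-trivial after stabilisation when $1/2$ is a unit.

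The principal obstacle will be upgrading the second map from a statement about $\Omega B$ of a symmetric monoidal category into an honest map of simplicial presheaves inducing an equivalence on all $\pi_i\bigl(-(\Delta^\bullet R)\bigr)$, rather than merely an isomorphism in $\H_\bullet(S)$. The ingredients already in hand, namely homotopy invariance and Nisnevich descent for $GW$ on regular affines with $1/2$ invertible (\cite{hornbostel:A1reps,myMV,myGWDG}), must be combined with an additivity-type cofinality argument ensuring that on $\Delta R$-sections the inclusion of inner product bundles into virtual ones is suitably cofinal. Once that is done, both sectionwise weak equivalences together imply the $\scrH_\bullet(S)$-isomorphism asserted in Theorem \ref{thm:GWrep}.
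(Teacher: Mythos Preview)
Your proposal has the right architecture---Stiefel-type contractibility for the first map, group completion for the second---but there is a genuine gap in the first step.

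The homogeneous-space identification $\GrO_{2n}(H^N)\cong O(H^N)/(O(2n)\times O(H^{N-n}))$ is not correct as stated: the right-hand side is the $O(H^N)$-orbit of a \emph{single} non-degenerate subspace, so it parametrises only those rank-$2n$ subspaces of one fixed isometry type, whereas $\GrO_{2n}(H^N)$ contains subspaces of every isometry type of that rank. Relatedly, the assertion that ``stably every form is hyperbolic when $1/2$ is available'' is false---the Witt group of $R$ can be highly nontrivial, e.g.\ $W(\mathbb{R})=\Z$. Your Stiefel argument therefore only shows that $\GrO_\bullet(\Delta R)\to (B_{et}O)(\Delta R)$ is a weak equivalence on the connected component of the base point; both sides have $\pi_0=\widetilde{GW}_0(R)$, and nothing you wrote moves the equivalence to the other components.

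The paper repairs this as follows. It introduces an intermediate presheaf of groupoids $\IPS_\bullet$ with $B\IPS_\bullet\simeq B_{et}O$ on affines (Lemma~\ref{lem:IPSisBetO}), fibres both $\GrO_\bullet(\Delta R)$ and $\IPS_\bullet(\Delta R)$ over the constant simplicial set $\widetilde{GW}_0(R)$ (Lemma~\ref{lem:GrOfibr}), and proves---essentially by your Stiefel argument, made precise as contractibility of $\St(V,V\perp H^{\infty}_{\Delta R})$ in Proposition~\ref{prop:StContract}---that the map is a weak equivalence on the fibre over~$0$. To propagate this to all components it constructs a linear-isometries $E_\infty$-operad $\E$ acting compatibly on both sides, so that $\GrO_\bullet(\Delta R)\to\IPS_\bullet(\Delta R)$ is a map of group-complete $E_\infty$-spaces (Proposition~\ref{prop:GrOEinf}); equivalence on one component then forces equivalence everywhere. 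For the second map the paper again leans on this $E_\infty$-structure: the Group Completion Theorem gives a homology isomorphism $\catS_\bullet(R)\to\widetilde{\GW}(R)$, hence also on $\Delta R$, and group-completeness on both sides upgrades this to a weak equivalence (Proposition~\ref{prop:SGWHweak}). Your proposed route via ``$B_{et}O=BO^+$ on affine sections'' is not established and the paper avoids the plus construction entirely.
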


We also give models for the $n$-th $\P^1$-loop space of $GW$ and their $S^1$-loop spaces.
Denote by $GW^n(X)$ the $n$-th shifted Grothendieck-Witt space of $X$ (\cite[Definition 7]{myMV} with $\eps = 1$, $Z=X$, $L=O_X$, $\mathcal{A}_X=O_X$, or \cite[Definition 9.1]{myGWDG}), that is, the Grothendieck-Witt space of the category of bounded chain complexes of vector bundles on $X$ with duality in $O_X[n]$, the line bundle $O_X$ placed in degree $-n$.
Let $GW^n:X \mapsto GW^n(X)$ be the corresponding simplicial presheaf made functorial as in \cite[Remark 9.4]{myGWDG}.
Then $GW^0=GW$, and
the presheaves $GW^n$ are homotopy invariant \cite[Theorem 9.8]{myGWDG} and satisfy the Nisnevich Brown-Gersten property \cite[Theorem 9.6]{myGWDG}.
Therefore, 
$$GW^n_i(X)\cong [X_+\wedge S^i,GW^n]$$ 
for all smooth $S$-schemes $X$.
The motivic spaces $GW^n$ are related by $\A^1$-weak equivalences $GW^n \sim \Omega_{\P^1}GW^{n+1}$ (a consequence of the $\P^1$-bundle theorem \cite[Theorem 9.10]{myGWDG}) and isomorphisms
$GW^n \cong GW^{n+4}$ \cite[\S8 Corollary 1]{myMV}, \cite[Remark 5.9]{myGWDG}.
The following is therefore a complete list of geometric models for the $n$-th $\P^1$-loop space $\Omega^n_{\P^1}GW \cong GW^{-n}$ of $\Z\times \OGr_{\bullet}$ and their $S^1$-loop spaces, $n\in \Z$.
Note that upon complex realization we obtain the $8$ spaces of real Bott-periodicity.

\begin{theorem}
\label{thm:8spaces}
Let $S$ be a regular noetherian scheme of finite Krull dimension with $\frac{1}{2}\in \Gamma(S,O_S)$. 
Then there are isomorphisms in $\H_{\bullet}(S)$ 

$$\begin{array}{ll}
GW^n \cong \left\{ \begin{array}{ll}
\Z \times \OGr_{\bullet} & n=0 \\
Sp/GL & n=1 \\
\Z \times BSp & n=2 \\
O/GL & n=3 
\end{array}
\right.
&\hspace{4ex}
\Omega_{S^1}GW^n \cong \left\{ \begin{array}{ll}
O & n=0 \\
(GL/O)_{et} & n=1 \\
Sp & n=2 \\
GL/Sp & n=3 
\end{array}
\right.
\end{array}$$
where $Sp$ denotes the infinite symplectic group and $(GL/O)_{et}$ denotes the etale or scheme theoretic quotient.
\end{theorem}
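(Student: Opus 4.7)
The plan is to use the $4$-periodicity $GW^n \cong GW^{n+4}$ to reduce to the cases $n \in \{0,1,2,3\}$, and to treat them via Theorem \ref{thm:GrO=KOuptoA1}, a symplectic analog of it, and Schlichting's $\A^1$-local Karoubi-type fiber sequence
\begin{equation*}
GW^n \to K \xrightarrow{H_{n+1}} GW^{n+1}
\end{equation*}
of simplicial presheaves, where (for $n+1$ even) $H_{n+1}$ is the hyperbolic map induced on classifying-space models by $GL_k \hookrightarrow O_{2k}$ or $GL_k \hookrightarrow Sp_{2k}$. The case $n=0$ is exactly Theorem \ref{thm:GrO=KOuptoA1}: $GW^0 \cong \Z \times \OGr_\bullet \cong \Z \times B_{et}O$, giving $\Omega_{S^1}GW^0 \cong O$ by the standard delooping $\Omega B_{et}O \cong O$.

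For $n=2$, the plan is to prove the symplectic analog of Theorem \ref{thm:GrO=KOuptoA1}. Since $\frac{1}{2} \in \Gamma(S,O_S)$, a $2$-shifted symmetric form is equivalent to an alternating form, so $GW^2$ is the Grothendieck-Witt space of symplectic vector bundles. Replacing $\OGr_d(V)$ by the symplectic Grassmannian $\GrSp_d(V)$ of symplectic subbundles, the hyperbolic plane by its symplectic analog, and $O$ by $Sp$, the proof of Theorem \ref{thm:GrO=KOuptoA1} transfers to yield $GW^2 \cong \Z \times \GrSp_\bullet \cong \Z \times B_{et}Sp$. Since $Sp$ is special in Serre's sense, $B_{et}Sp \cong BSp$ in $\H_\bullet(S)$, whence $GW^2 \cong \Z \times BSp$ and $\Omega_{S^1}GW^2 \cong Sp$.

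For $n=1$ and $n=3$, the fiber sequence displayed above applied with indices $n=1$ and $n=3$ (using periodicity $GW^4 \cong GW^0$) identifies $GW^1 \cong \mathrm{fib}(\Z \times BGL \to \Z \times BSp) \cong Sp/GL$ and $GW^3 \cong \mathrm{fib}(\Z \times BGL \to \Z \times B_{et}O) \cong O/GL$, the homotopy fibers in $\H_\bullet(S)$ being represented by the étale (or, when only $Sp$ is involved, Zariski) homogeneous quotients. Looping the same fiber sequence with indices $n=0$ and $n=2$ gives $\Omega_{S^1}GW^1 \cong \mathrm{fib}(GW^0 \to K) \cong (GL/O)_{et}$ and $\Omega_{S^1}GW^3 \cong \mathrm{fib}(GW^2 \to K) \cong GL/Sp$, now via the forgetful inclusions $O \hookrightarrow GL$ and $Sp \hookrightarrow GL$.

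The main obstacle will be setting up the symplectic analog of Theorem \ref{thm:GrO=KOuptoA1}: one must transfer every ingredient of its proof --- Nisnevich/étale descent, homotopy invariance, group completion, and the matching of symplectic Grassmannian colimits with étale classifying spaces --- from the orthogonal to the symplectic setting. A secondary technical point is verifying the identification of homotopy fibers of maps $BH \to BG$ in $\H_\bullet(S)$ with the (étale) homogeneous quotients $G/H$, which uses the special-group property where applicable and the étale-local triviality of the $H$-torsor $G \to G/H$.
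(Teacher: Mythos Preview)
Your strategy is the same as the paper's: the cases $n=0,2$ (and their loop spaces) are handled by Theorem~\ref{thm:GrO=KOuptoA1} and its symplectic analog, and the cases $n=1,3$ (and their loop spaces) come from the Karoubi fiber sequence $GW^n \to K \to GW^{n+1}$ together with an identification of the relevant homotopy fibers as homogeneous quotients $G/H$.

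One point deserves more care than your last paragraph suggests. You call the identification of the homotopy fiber of $B_{et}H \to B_{et}G$ with $(G/H)_{et}$ a ``secondary technical point'' resolved by \'etale-local triviality of the torsor $G \to G/H$. That argument only gives a section-wise (or \'etale-local) fiber sequence $(G/H)_{et} \to B_{et}H \to B_{et}G$; it does \emph{not} automatically survive $\A^1$-localization, since fiber sequences are not in general preserved by the $\A^1$-localization functor. The paper handles this by working throughout at the level of $\Delta R$ and proving (Lemma~\ref{lem:GmodHBet}) that the sequence remains a homotopy fibration after evaluation at $\Delta R$; the argument uses that $\pi_0 B_{et}H$ and $\pi_0 B_{et}G$ are $\A^1$-invariant on affines and that the map is a map of group-complete $E_{\infty}$-spaces. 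Similarly, your claim ``$\Omega_{S^1}GW^0 \cong O$ by the standard delooping $\Omega B_{et}O \cong O$'' is not quite standard: one needs that $BO(\Delta R)$ and $B_{et}O(\Delta R)$ have the same loop space, which the paper deduces from the fibration $BO \to B_{et}O \to \pi_0 B_{et}O$ with $\A^1$-invariant discrete base. So your outline is right, but the passage from section-wise fiber sequences to fiber sequences in $\H_\bullet(S)$ is where the real work lies, and ``special group'' or ``\'etale-local triviality'' alone won't get you there.
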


More precise versions are proved in Theorems \ref{ZBetGsiKG} and \ref{thm:Modles}.
\vspace{2ex}

\noindent
{\bf Acknowledgments}.
The authors would like to thank Paulo Lima-Filho and Simon Markett for useful discussions.
We would also like to acknowledge support through NSF grant DMS 0906290.

\section{Orthogonal Grassmannians}
\label{GrO}

For a quasi-compact, separated scheme $S$, denote by
 $\Sch_S$ and $\Sm_S$ the categories of separated, finite type $S$-schemes and its full subcategory of smooth $S$-schemes, respectively.

Let $\F$ be a quasi-coherent sheaf on a scheme $X$.
A {\em symmetric bilinear form} on $\F$ is a map 
$\ffi: \F\otimes_X\F \to O_X$ of $O_X$-modules such that $\ffi\tau =\ffi$ where $\tau:\F\otimes \G \cong \G\otimes \F$ is the switch map.
The form $\ffi$ is called {\em non-degenerate} and the pair $(\F,\ffi)$ is called an {\em inner product space} if $\F$ is a vector bundle and the adjoint
$\hat{\ffi}: \F \to \F^*= Hom_{O_X}(\F,O_X): \xi\mapsto \ffi(\hspace{2ex}\otimes \xi)$ is an isomorphism.
If $g:\G \to \F$ is a map of $O_X$-modules, then the restriction $\ffi_{|\G}$ of $\ffi$ to $\G$ has as adjoint the map $g^*\hat{\ffi} g$.
If $\F$ is a sheaf on $S$ and $p:X\to S$ is an $S$-scheme, we may write $\F_X$ for the sheaf $p^*\F$.

\begin{definition}[Orthogonal Grassmannians]
\label{dfn:GrO}
Let $\F=(\F,\ffi)$ be a quasi-coherent sheaf over $S$ together with a symmetric bilinear form $\ffi: \F\otimes_S\F \to O_S$ which may be degenerate.
The {\em Grassmannian of non-degenerate subspaces of $\F$} is the presheaf
$$\OGr(\F):(\Sch_S)^{op}\to \Sets$$
whose value at an $S$-scheme $p:X\to S$ is the set 
$\GrO(\F_X)$ of finite rank locally free $O_X$-submodules $E\subset \F_X$ of $\F_X = p^*\F$ for which the restriction $\ffi_{|E}$ of the form $\ffi$ to $E$ is non-degenerate.
For a map $f:X\to Y$ of $S$-schemes, the map $\GrO(\F_Y)\to \GrO(\F_X)$ is induced by the pullback $f^*$ of quasi-coherent sheaves.
For an integer $d\geq 0$ we let
$$\GrO_d(\F) \subset \GrO(\F)$$
be the subpresheaf of those non-degenerate subspaces $E\subset \F$ which have constant rank $d$.
If $X=\Spec R$ is affine, we may write $\GrO_d(\F_R)$ and $\GrO(\F_R)$ in place of $\GrO_d(\F_X)$ and $\GrO(\F_X)$.
\end{definition}

\begin{lemma}
\label{lem:GrOsmoothaffine}
Let $V = (V,\ffi)$ be an inner product space of rank $n$ over $S$, and $0\leq d \leq n$ be an integer.
Then the presheaf $\GrO_d(V)$ is represented by a scheme which is smooth and affine over $S$.
\end{lemma}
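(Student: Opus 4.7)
The approach is to identify $\OGr_d(V)$ functorially with the scheme of self-adjoint idempotents of trace $d$ inside the endomorphism bundle of $V$. Affineness then follows because this scheme is cut out by closed equations inside an affine $S$-scheme, while smoothness follows from recognising $\OGr_d(V)$ as an open subscheme of the smooth Grassmannian $\Gr_d(V)$.

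For the identification, I would construct, for each $S$-scheme $X$, a natural bijection between $\OGr_d(V)(X)$ and the set of $O_X$-linear endomorphisms $e:V_X\to V_X$ satisfying
$$e^2=e, \qquad e=e^*, \qquad \operatorname{tr}(e)=d,$$
where $e^*$ denotes the adjoint with respect to $\ffi$. The forward map sends a non-degenerate rank-$d$ subbundle $E\subset V_X$ to the projection onto $E$ along its orthogonal complement $E^\perp$. Conversely, given such an $e$, the identity $\ffi(e(v),u)=\ffi(v,e(u))$ shows $\ker(e)\subset \im(e)^\perp$, and the idempotent splitting $V_X=\im(e)\oplus\ker(e)$ promotes this to an equality; hence $\ffi$ restricts non-degenerately to $\im(e)$, which is locally free of rank $\operatorname{tr}(e)=d$.

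Granting this, the $S$-scheme representing the functor $X\mapsto \operatorname{End}_{O_X}(V_X)$ is the total space of the rank-$n^2$ vector bundle $V^*\otimes V$, and is affine over $S$ (Zariski-locally $\A^{n^2}_S$). The three conditions $e^2=e$, $e=e^*$, $\operatorname{tr}(e)=d$ are each closed, so $\OGr_d(V)$ is representable by a closed subscheme of an affine $S$-scheme, hence affine over $S$. For smoothness, the condition that $\ffi$ restrict non-degenerately to a rank-$d$ subbundle of $V$ pulled back to $\Gr_d(V)$ is precisely the invertibility of the composite $\calE\hookrightarrow V_{\Gr_d(V)}\to V^*_{\Gr_d(V)}\twoheadrightarrow \calE^*$ (adjoint of $\ffi|_{\calE}$ for the tautological rank-$d$ subbundle $\calE$), an open condition; thus $\OGr_d(V)$ is open in the smooth $S$-scheme $\Gr_d(V)$.

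The main technical subtlety I expect is in the identification step: for an idempotent endomorphism of a locally free sheaf one must know that the image is itself locally free with rank equal to the trace, so that the single closed equation $\operatorname{tr}(e)=d$ correctly cuts out the ``rank exactly $d$'' locus rather than some more elaborate condition on vanishing of minors. Everything else is a short local calculation.
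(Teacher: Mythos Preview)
Your proposal is correct and follows essentially the same approach as the paper: both arguments realise $\OGr_d(V)$ as an open subscheme of $\Gr_d(V)$ for smoothness, and identify it with a closed locus of self-adjoint idempotents in the endomorphism bundle for affineness. The only cosmetic difference is that the paper first shows all of $\GrO(V)$ is affine via the bijection with $\{p:p^2=p,\ p^*\ffi=\ffi p\}$ and then observes that $\GrO_d(V)$ is a closed (in fact clopen) subscheme, whereas you impose the additional closed condition $\operatorname{tr}(e)=d$ directly; your worry about this trace condition is unfounded, since for any idempotent on a locally free module the image is a direct summand whose rank equals the (locally constant, integer-valued) trace.
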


\begin{proof}
To see that $\GrO_d(V) \to S$ is smooth, we note that it is an open subscheme of the usual Grassmannian $\Gr_d(V)$ of $d$-planes in $V$.
More precisely, if we denote by $\xi$ the universal rank $d$ subbundle of $V$ on $\Gr_d(V)$, then the form on $V$ restricts to a (degenerate) form $\ffi_{|\xi}$ on $\xi$, and 
$\GrO_d(V)$ is the open subscheme of $\Gr_d(V)$ where $\ffi_{|\xi}$ is non-degenerate, that is, $\GrO_d(V)$ is the non-vanishing locus of the global section $\Lambda^d\hat{\ffi}$ of the line bundle $\underline{\Hom}_{O_X}(\Lambda^d\xi,\Lambda^d\xi^*)$ on $X=\Gr_d(V)$.
Since $\Gr_d(V)\to S$ is smooth, so is $\GrO_d(V) \to S$.

To see that $\GrO_d(V)\to S$ is an affine morphism, note that for any $S$-scheme $X$, we have a natural bijection of sets
$$\GrO(V_X) \cong \{ p \in \Hom_{O_X}(V_X,V_X)|\ p=p^2,\  p^*\ffi = \ffi p\}.$$
The map is defined by $(i:M \subset V_X) \mapsto i(\ffi_{|M}^{-1})i^*\ffi$
and has inverse $p\mapsto \im(p)\subset V_X$.
This shows that $\GrO(V)$ is a closed subscheme of the vector bundle  $\underline{\Hom}_{O_S}(V,V)$ over $S$ defined by two equations.
In particular, $\GrO(V) \to \underline{\Hom}_{O_S}(V,V) \to S$ are affine morphisms.
As a closed subscheme of $\GrO(V)$, the scheme $\GrO_d(V)$ is also affine over $S$.
\end{proof}

For an $S$-scheme $X$, let $H_X$ be the hyperbolic plane over $X$, that is, the rank $2$ vector bundle $O_X^2$ equipped with the inner product $(x,y)\cdot (x',y')=xy-x'y'$.
Let
$H^n_X$ be its $n$-fold orthogonal sum (an inner product space over $X$) and let $H^{\infty}_X = \colim_nH^n_X$ be the infinite hyperbolic space (a quasi-coherent $O_X$-module with symmetric bilinear form).
Order non-degenerate subspaces of $H^{\infty}_X$ by inclusion.
This defines a filtered category $\calH$.
Its objects are non-degenerate subspaces $V\subset H^{\infty}$ (which are inner product spaces), and maps are inclusions of subspaces.
For a non-degenerate subspace $V \subset V'$ of an inner product space $V'$, denote by $V'-V$ the orthogonal complement of $V$ in $V'$.

\begin{definition}[Infinite orthogonal Grassmannian]
\label{dfn:infiniteGrO}
For a vector bundle $V$ of constant rank, write $|V|$ for its rank. 
The {\em infinite orthogonal Grassmannian} over $S$ is the presheaf
$$GrO_{\bullet}= \colim_{V\subset H^{\infty}_S} GrO_{|V|}(V\perp H^{\infty}).$$
The colimit is taken over the non-degenerate subbundles of $H^{\infty}_S$ of constant rank ordered by inclusion, and  the transition maps are   
$$GrO_{|V|}(V\perp H^{\infty}) \to GrO_{|V'|}(V'\perp H^{\infty}): E \mapsto (V'-V)\perp E$$
whenever $V\subset V'$.
Of course, it suffices to take the colimit over a cofinal subset such as the set $\{H^n_S|\ n\in \N\}$.
\end{definition}

\section{The etale classifying space}

Let $S$ be a scheme and 
$\F=(\F,\ffi)$ a quasi-coherent sheaf over $S$ together with a symmetric bilinear form $\ffi: \F\otimes_S\F \to O_S$ which may be degenerate.
For an $S$-scheme $X$, denote by 
$$\IPS(\F_X)$$
the category of inner product spaces embedded in $\F_X$, that is, the category
whose objects are the locally free $O_X$-submodules $E\subset \F_X$ of $\F_X = p^*\F$ for which the restriction $\ffi_{|E}$ of the form $\ffi$ to $E$ is non-degenerate.
A map from $E_0\subset \F_X$ to $E_1\subset \F_X$ is an isometry 
$(E_0,\ffi_{|E_0}) \to (E_1,\ffi_{|E_1})$ which does not need to be compatible with the embeddings $E_0,E_1\subset \F_X$.
For a map $f:X \to Y$ of $S$-schemes, pull-back $f^*$ of quasi-coherent modules defines a map $\IPS(\F_Y) \to \IPS(\F_X)$, and we obtain a presheaf of categories
$X \mapsto \IPS(\F_X)$.
Note that the set of objects of $\IPS(\F_X)$ is precisely $\GrO(\F_X)$.

For an integer $d\geq 0$, we denote by
$$\IPS_d(\F_X) \subset \IPS(\F_X)$$
the full subcategory of those inner product spaces $E \subset \F_X$ which have constant rank $d$.
Then $\IPS_d(\F)$ is a presheaf of groupoids
with presheaf of objects $\GrO_d(\F)$.

In the category $\IPS_{|V|}(V\perp H^{\infty})$,
the group of automorphisms of the object $V\subset V\perp H^{\infty}:v\mapsto (v,0)$ is the group $O(V)$ of isometries of $V$.
Thus we have a full inclusion $O(V) \to \IPS_{|V|}(V\perp H^{\infty})$ of presheaves of categories.
After etale sheafification, this inclusion becomes an equivalence of categories.
This is because in a strictly henselian ring $R$ with $\frac{1}{2}\in R$, every unit is a square, and thus, any two innner product spaces over $R$ 
are isometric if and only if they have the same rank.
It follows that the inclusion of categories induces a map of simplicial presheaves $BO(V) \to B\IPS_{|V|}(V\perp H^{\infty})$ which is a weak equivalence at all strictly henselian $R$ with $\frac{1}{2}\in R$.
In other words,
this map is a weak equivalence in the etale topology.
In particluar, a globally fibrant model of $B \IPS_{|V|}(V\perp H^{\infty})$ for the etale topology is also a globally fibrant model, denoted $B_{et}O(V)$, of $BO(V)$. 
Therefore, we obtain a sequence of maps 
\begin{equation}
\label{eqn:IPStoBetO}
B O(V) \to B \IPS_{|V|}(V\perp H^{\infty}) \to B_{et}O(V)
\end{equation}
which are weak equivalences in the etale topology, and the last presheaf is fibrant (in the etale topology).

\begin{lemma}
\label{lem:IPSisBetO}
Let $V$ be an inner product space over a scheme $S$ with $\frac{1}{2}\in \Gamma(S,O_S)$.
Then for any affine $S$-scheme $\Spec R$, the map
$$B \IPS_{|V|}(V\perp H^{\infty})(R) \to B_{et}O(V)(R)$$
is a weak equivalence of simplicial sets.
In particular, the following map is an $\A^1$-weak equivalence
$$B \IPS_{|V|}(V\perp H^{\infty}) \to B_{et}O(V).$$
\end{lemma}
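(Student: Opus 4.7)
The plan is to verify the weak equivalence at $\Spec R$ by checking $\pi_0$ and $\pi_1$, since both sides are $1$-types: the source $\IPS_{|V|}(V\perp H^\infty)(R)$ is an ordinary groupoid (morphisms are isometries, hence invertible) and the target $B_{et}O(V)(R)$ is a fibrant replacement of the $1$-stack $BO(V)$. The $\A^1$-weak-equivalence statement then follows by a standard descent argument, since smooth affines form a basis for the Nisnevich topology on $\Sm_S$, so a pointwise equivalence on smooth affines is Nisnevich-local and hence $\A^1$-local.

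For $\pi_0$ I would introduce the groupoid $\mathcal{G}(R)$ of all rank-$|V|$ inner product spaces over $R$ with isometries as morphisms, and show that the forgetful functor $\IPS_{|V|}(V\perp H^\infty)(R)\to \mathcal{G}(R)$ is an equivalence. Fully faithfulness is immediate; for essential surjectivity, given $W\in \mathcal{G}(R)$ I would choose a module complement $W^c$ with $W\oplus W^c\cong R^N$, observe that the hyperbolic IPS $H(W\oplus W^c)\cong H^N$ (hyperbolic forms on free modules agree with the standard sum when $\tfrac12\in R$) decomposes orthogonally as $H(W)\perp H(W^c)$, thereby exhibiting $H(W)$ as a non-degenerate subspace of $H^N$, and embed $W$ non-degenerately in $H(W)=W\oplus W^\vee$ via $w\mapsto (w,\tfrac12\hat{\ffi}(w))$ (the pullback of the hyperbolic form along this section equals $\ffi$ by symmetry). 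Composition yields a non-degenerate embedding $W\hookrightarrow H^N\subset V_R\perp H^\infty_R$. On the target side, $\pi_0 B_{et}O(V)(R)=H^1_{et}(R,O(V))$ parametrises \'etale-locally trivial $O(V)$-torsors, which by the strictly-henselian remark preceding the lemma is in bijection with $\pi_0\mathcal{G}(R)$ via the torsor-of-isometries map $W\mapsto \underline{\Iso}(V_R,W)$.

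For $\pi_1$ at a basepoint $W$, the source gives $\operatorname{Aut}_{\mathcal{G}(R)}(W)=O(W)(R)$ and the target gives the $R$-points of the inner twist $O(V)^{\underline{\Iso}(V_R,W)}\cong O(W)$; compatibility of the map~\eqref{eqn:IPStoBetO} with these identifications is a formal check. The main technical obstacle will be precisely this compatibility: unwinding how the \'etale-fibrant replacement of the $1$-stack $BO(V)$ realises the torsor-of-isometries description on $\pi_0$ and the inner-form description on $\pi_1$ at the level of $R$-sections, and matching them with the explicit groupoid equivalence above. This is a general fact about fibrant replacements of $1$-stacks rather than anything specific to inner product spaces, but it requires some bookkeeping.
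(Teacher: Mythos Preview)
Your approach is correct, but the paper takes a more streamlined route.  Rather than computing $\pi_0$ and $\pi_1$ separately, the paper factors the map through the associated \'etale stack $St$ of the presheaf of groupoids $\IPS_{|V|}(V\perp H^{\infty})$, giving
\[
B\,\IPS_{|V|}(V\perp H^{\infty})(X) \longrightarrow B\,St(X) \longrightarrow B_{et}O(V)(X).
\]
The second map is a weak equivalence for \emph{all} $X$ by Jardine's theorem \cite[Theorem 6]{JardineStacks}, which says precisely that the fibrant replacement of $BO(V)$ is modelled sectionwise by the nerve of the stack of $O(V)$-torsors.  The first map is an equivalence of groupoids for affine $X$ because $\IPS_{|V|}(V\perp H^{\infty})(R)$ is already equivalent to the groupoid of all $O(V)$-torsors over $R$---this is exactly your essential-surjectivity argument (embedding an arbitrary rank-$|V|$ inner product space into $V\perp H^{\infty}$).

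The ``general fact about fibrant replacements of $1$-stacks'' that you flag as the main technical obstacle---that $B_{et}O(V)(R)$ is a $1$-type with the expected $\pi_0=H^1_{et}(R,O(V))$ and $\pi_1$ given by inner twists, compatibly with the map from $B\,\IPS$---is precisely the content of Jardine's theorem.  So your approach does not avoid that input; it unpacks it into its $\pi_0$/$\pi_1$ constituents and then reassembles.  The paper's presentation is cleaner because it isolates the general input (Jardine) from the one specific input (affines see all torsors), and applies each once.  Your argument has the virtue of being more explicit about what is actually being claimed at the level of homotopy groups, but it costs you the extra compatibility bookkeeping you mention.
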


\begin{proof}
This follows from \cite{JardineStacks}.
Let $St$ be the stack associated with the sheaf of groupoids $\IPS_{|V|}(V\perp H^{\infty})$, then $St$ is a sheaf of groupoids satisfying the effective descent condition for the etale topology.
So, $X\mapsto St(X)$ is a sheaf version of the category of $O(V)$-torsors over $X$.
Since for affine $X$, the category $\IPS_{|V|}(V\perp H^{\infty}_X)$ is already the category of all $O(V)$-torsors, the map 
$\IPS_{|V|}(V\perp H^{\infty})(X) \to St(X)$ is an equivalence of categories for $X$ affine.
Therefore, in the string of maps
$$B \IPS_{|V|}(V\perp H^{\infty})(X) \to B St(X) \to B_{et}O(V)(X),$$
the first map is a weak equivalence for every affine $S$-scheme $X$.
The second map $B St(X) \to B_{et}O(V)(X)$ is a weak equivalence of simplicial sets for all $S$-schemes $X$ \cite[Theorem 6]{JardineStacks}.
\end{proof}

\begin{definition}
\label{dfn:Sbullet}
Set
$$\IPS_{\bullet} = \colim_{V\subset H^{\infty}_S} \IPS_{|V|}(V\perp H^{\infty})$$
where for $V \subset V'$, the transition map 
$\IPS_{|V|}(V\perp H^{\infty}) \to \IPS_{|V'|}(V'\perp H^{\infty})$ is defined by $E\mapsto (V'-V)\perp E$ on objects and by $g\mapsto 1_{V'-V}\perp g$ on maps.
\end{definition}

Inclusion of zero-simplices and the second map in (\ref{eqn:IPStoBetO})
define the string of maps of simplicial presheaves
$$\GrO_{|V|}(V\perp H^{\infty}) \to \IPS_{|V|}(V\perp H^{\infty}) \to B_{et}O(V)$$
in which the second map is section-wise a weak equivalence on affine schemes, by Lemma \ref{lem:IPSisBetO}.
Passing to the colimit over the index category $\calH$ defines the string of maps
\begin{equation}
\label{eqn:GrOIPSBetOstable}
\GrO_{\bullet} \to \IPS_{\bullet} \to B_{et}O
\end{equation}
in which the second map is a weak equivalence when evaluated at affine schemes.

\section{The Grothendieck-Witt space}

Let $R$ be a commutative ring.
Let $\scS_R$ denote the category of inner product spaces over 
$R$ with isometries as morphisms.
This category is symmetric monoidal with respect to orthogonal sum $\perp$.
In particular, we have the category $\scS_R^{-1}\scS_R$ as constructed in \cite{quillenGrayson}
whose classifying space $B\scS_R^{-1}\scS_R$ is naturally weakly equivalent to $GW(R)$ \cite{mygiffen}, \cite[Appendix A]{myGWDG} (at least when $\frac{1}{2}\in R$ though this is also true without this hypothesis).
Recall that the objects of $\scS_R^{-1}\scS_R$  are pairs of inner product spaces and a map $(A_0,A_1) \to (B_0,B_1)$ in that category is an equivalence class of data $[C,a_0,a_1]$ where $C$ is an inner product space and $a_i: A_i\perp C \to B_i$ is an isometry for $i=0,1$.
We have $[C,a_0,a_1]=[C',a'_0,a'_1]$ if and only if there is an isometry $f: C \cong C'$ such that $a'_i(1_{A_i}\perp f) = a_i$ for $i=0,1$.

The category $\scS_R^{-1}\scS_R$ is not convenient for our purposes as
it is, a priori, not a small category,  and it is not really functorial in $R$.
In particular, the assignment $X \mapsto \scS_R^{-1}\scS_R$ with $R=\Gamma(X,O_X)$ does not define a presheaf.
We remedy this as follows.

\begin{definition}[The presheaf of Grothendieck-Witt spaces]
Let
$$\GW(R) \subset \scS_R^{-1}\scS_R$$
be the full subcategory 
whose objects are pairs
$(A,B)$ where $A \subset H^{\infty}_R\perp H^{\infty}_R$ and $B \subset  (H^{\infty})_R^{\perp 3}$ are finitely generated non-degenerate subspaces of $(H^{\infty}_R)^{\perp 2}$ and $(H^{\infty}_R)^{\perp 3}$, respectively.
The ambient bilinear form spaces $(H^{\infty}_R)^{\perp 2}$ and $(H^{\infty}_R)^{\perp 3}$ are chosen so that we can construct certain maps below.
The explicit ambient spaces don't matter as long as they are functorial in $R$ and contain a copy of each inner product space over $R$.

From our definition, the category $\GW(R)$ is small, it is equivalent to $\scS_R^{-1}\scS_R$, and it is functorial in $R$.
In particular, the assignment $X \mapsto \GW(R)$ with $R=\Gamma(X,O_X)$ does define a presheaf (of categories and hence of simplicial sets after application of the nerve functor). 
\end{definition}

From \cite{mygiffen}, \cite[Appendix A]{myGWDG}, there is a map of presheaves
$\GW \to GW$ which is a weak equivalence (of simplicial sets) for all affine schemes.
We record a special case in the following Lemma.

\begin{lemma}
Let $S$ be a regular separated noetherian scheme of finite Krull dimension with $\frac{1}{2}\in S$.
Then map of presheaves 
$\GW \to GW$ in $\Delta^{op}\PSh(\Sm_S)$ is a weak equivalence of simplicial sets at all $\Spec R \to S$.
In particular the map of presheaves is a Nisnevich simplicial weak equivalence, and hence an $\A^1$-weak equivalence.
\Qed
\end{lemma}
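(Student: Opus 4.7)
The plan is to reduce the statement to the cited results and then to standard formal properties of the Nisnevich and $\A^1$-localized model structures, so the proof is essentially a bookkeeping exercise.

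For the first claim (sectionwise weak equivalence on all smooth affine $S$-schemes $\Spec R \to S$), I would argue as follows. The definition of $\GW(R)$ just above the lemma records that $\GW(R)$ is a small full subcategory of $\scS_R^{-1}\scS_R$ containing a copy of every pair of inner product spaces, so that the inclusion $\GW(R) \hookrightarrow \scS_R^{-1}\scS_R$ is an equivalence of categories and hence induces a weak equivalence of nerves. By the cited results \cite{mygiffen} and \cite[Appendix A]{myGWDG}, the natural map $B\scS_R^{-1}\scS_R \to GW(R)$ is a weak equivalence whenever $\frac{1}{2}\in R$ (and in fact in general). Composing these two maps yields exactly the map $\GW(R) \to GW(R)$ of the lemma, and it is a weak equivalence for every smooth affine $S$-scheme $\Spec R$.

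For the Nisnevich claim, I would appeal to the stalk-wise characterization of Nisnevich local weak equivalences. A Nisnevich stalk of a simplicial presheaf on $\Sm_S$ at a point $(X,x)$ is the filtered colimit of its values on a cofinal system of Nisnevich neighborhoods of $x$ in $X$, and such neighborhoods may always be chosen affine by shrinking. Since both $\GW$ and $GW$ are constructed from categories of finitely generated projective modules and therefore send filtered colimits of rings to homotopy colimits (in fact to filtered colimits) of simplicial sets, and since filtered colimits of simplicial sets preserve weak equivalences, the sectionwise weak equivalence on affines from the first step propagates to a weak equivalence on every Nisnevich stalk. This is exactly what it means for $\GW \to GW$ to be a Nisnevich local (equivalently, Nisnevich simplicial) weak equivalence.

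Finally, the $\A^1$-weak equivalence is automatic: the $\A^1$-model structure on $\Delta^{op}\PSh(\Sm_S)$ used throughout is, by construction in \cite{MorelVoevodsky}, a left Bousfield localization of the Nisnevich local model structure, so every Nisnevich local weak equivalence is in particular an $\A^1$-weak equivalence. There is no substantial obstacle here; the only point requiring any care is the interaction of $\GW$ and $GW$ with filtered colimits of affines in step two, and this is standard given the explicit definitions of both presheaves.
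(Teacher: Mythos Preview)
Your proposal is correct and matches the paper's intent: the lemma in the paper carries no proof beyond the \texttt{\textbackslash Qed} symbol, the preceding sentence having already attributed the sectionwise statement on affines to \cite{mygiffen} and \cite[Appendix A]{myGWDG}, with the Nisnevich and $\A^1$ consequences left as evident formalities. One small simplification: in your second step you do not actually need $\GW$ and $GW$ to commute with filtered colimits of rings---the Nisnevich stalk of any presheaf is by definition the filtered colimit of its sections over a cofinal system of (affine) neighborhoods, and filtered colimits of simplicial sets preserve weak equivalences, so the stalkwise weak equivalence follows directly from the sectionwise one on affines.
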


\begin{definition}
We define the presheaf of {\em reduced Grothendieck-Witt spaces} $\widetilde{\GW}$ 
as the presheaf of categories which for a ring $R=\Gamma (X,O_X)$ is the full subcategory 
$$\widetilde{\GW}(R) \subset \GW(R)$$ 
of objects $(A,B) \in \GW(R)$ such that 
$A \subset H^{\infty}_R =0\perp H^{\infty}_R \subset (H^{\infty}_R)^{ \perp 2}$, and $B \subset A \perp H^{\infty}_R \subset 0 \perp (H^{\infty}_R)^{\perp 2} \subset (H^{\infty}_R)^{ \perp 3}$ and
$A,B$ have the same constant rank.
For an integer $i$, we set $\widetilde{GW}_i(R) = \pi_i(\widetilde{\GW}(R))$ where the homotopy groups are taken with respect to the base point $(0,0)$.
\end{definition}

Consider the integers $\Z$ as a (symmetric monoidal) category with one object for each integer and only identity morphisms.
Let $\N \subset\Z$ be the (full) subcategory of non-negative integers viewed as a symmetric monoidal category where the monoidal product is given by addition.
 
The functor 
\begin{equation}
\label{eqn:NNisZ}
\N^{-1}\N \to \Z: (n,m) \mapsto n-m
\end{equation}
induces a weak equivalence of simplicial sets (after application of the nerve functor) since all fibres are filtered categories and hence contractible.

Consider the ring $R$ as an inner product space with bilinear form $R \otimes R \to R: x\otimes y \mapsto xy$.
Then we have a map of presheaves of categories 
\begin{equation}
\label{eqn:NNtoGW}
\N^{-1}\N \to \GW: (n,m)\mapsto (R^n,R^m)
\end{equation}
where the first factor $R^n$ is considered as being in $H^n\perp 0 \subset H^{\infty} \perp H^{\infty}$ and the second factor $R^m$ as being in 
$H^m \perp 0 \perp 0 \subset H^{\infty}\perp H^{\infty} \perp H^{\infty}$.
Together with the inclusion $\widetilde{\GW} \subset \GW$ this defines a 
 map of presheaves of categories
\begin{equation}
\label{eqn:ZtimesTildeGWisGW}
\N^{-1}\N \times \widetilde{\GW} \to \GW: (n,m), (A,B) \mapsto (R^n \perp A,R^m\perp B)
\end{equation}

\begin{lemma}
Let $R$ be a connected ring with $\frac{1}{2}\in R$.
Then the map (\ref{eqn:ZtimesTildeGWisGW}) is a weak equivalence of simplicial sets.
In particular, the maps (\ref{eqn:ZtimesTildeGWisGW}) and (\ref{eqn:NNisZ}) induce $\A^1$-weak equivalences
$$\Z \times \widetilde{\GW} \stackrel{\sim}{\leftarrow}  \N^{-1}\N \times \widetilde{\GW} \stackrel{\sim}{\to} \GW$$
\end{lemma}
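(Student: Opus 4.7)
The plan is to view (\ref{eqn:ZtimesTildeGWisGW}) as an $H$-map between grouplike $H$-spaces---both $H$-structures come from the symmetric monoidal product $\perp$, and grouplikeness of the target is built into Quillen--Grayson's $\scS^{-1}\scS$ construction recalled above. For such an $H$-map a standard criterion guarantees that it is a weak equivalence once (a) it induces an isomorphism on $\pi_0$ and (b) its restriction to the connected component of the basepoint is a weak equivalence. The proof reduces to verifying these two conditions.

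For (a), I would introduce the rank functor $r\colon\GW(R)\to\N^{-1}\N$, $(A,B)\mapsto(\rk A,\rk B)$, which is well defined because $R$ is connected. The composition $\N^{-1}\N\to\GW(R)\xrightarrow{r}\N^{-1}\N$ is the identity, so on $\pi_0$ one obtains a split short exact sequence $0\to K\to GW_0(R)\to\Z\to 0$ with splitting $n\mapsto[R^n]$. The map $\pi_0\widetilde{\GW}(R)\to GW_0(R)$ factors through the kernel $K$, and I would show it is a bijection onto $K$ by a stabilization argument: since $H^\infty_R$ stably absorbs every inner product space of finite rank, any class $[(A,B)]$ with $\rk A=\rk B$ can be represented by an object whose components sit inside the prescribed ambient spaces of $\widetilde{\GW}$, and any identification in $\GW(R)$ between two such representatives can be realized in $\widetilde{\GW}(R)$ after enlarging the connecting inner product space $C$ appearing in a morphism $[C,a_0,a_1]$ of $\scS^{-1}\scS$.

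For (b), observe that the basepoint component of $\N^{-1}\N$ at $(0,0)$ is the full subcategory on $\{(k,k):k\in\N\}$, a filtered poset with contractible nerve. Consequently the map on basepoint components is weakly equivalent to the inclusion $\widetilde{\GW}(R)^{(0,0)}\hookrightarrow\GW(R)^{(0,0)}$ of basepoint components, and it suffices to show this inclusion is a weak equivalence of nerves. I would prove this via Quillen's Theorem A: for each $(A,B)\in\GW(R)^{(0,0)}$ (equivalently, $A$ and $B$ stably isometric) the comma category of objects of $\widetilde{\GW}(R)^{(0,0)}$ lying over $(A,B)$ is filtered---any two such choices are dominated by adjoining a common enlarged inner product space placed inside the constrained ambient of $\widetilde{\GW}$---hence has contractible nerve, and the conclusion follows.

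The main obstacle is making step (b) fully rigorous: morphisms in $\scS^{-1}\scS$ are equivalence classes $[C,a_0,a_1]$, and the ambient spaces for $\widetilde{\GW}$ are more constrained than those for $\GW$, so some bookkeeping is needed to relocate the chosen connecting data into the correct ambient. Once (\ref{eqn:ZtimesTildeGWisGW}) is established as a section-wise weak equivalence at every connected $R$ with $\tfrac{1}{2}\in R$, the ``in particular'' zig-zag of $\A^1$-weak equivalences follows by combining with the weak equivalence $\N^{-1}\N\to\Z$ of (\ref{eqn:NNisZ}), using that a section-wise weak equivalence of simplicial presheaves is automatically an $\A^1$-weak equivalence.
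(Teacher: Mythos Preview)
Your approach is correct and matches the skeletal argument the paper gives. The paper's proof is a single sentence: it notes that the rank difference $(A,B)\mapsto\rk A-\rk B$ is a well-defined functor $\GW(R)\to\Z$ (connectedness of $R$ is needed so that ranks are constant) and that (\ref{eqn:NNtoGW}) splits it. Everything else is left implicit, so what you have written is really a fleshed-out version of the same idea.

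Your step (b), however, is harder than it needs to be. You do not need Quillen's Theorem~A or any comma-category analysis: the inclusion $\widetilde{\GW}(R)\hookrightarrow r^{-1}(0)$ of full subcategories of $\GW(R)$ is already an \emph{equivalence of categories}. It is fully faithful by construction, and it is essentially surjective because the isomorphisms in $\scS_R^{-1}\scS_R$ are exactly the morphisms $[0,a_0,a_1]$ with $a_i$ isometries, and every inner product space over $R$ embeds isometrically into $H^\infty_R$ (so any $(A,B)$ with $\rk A=\rk B$ is isomorphic to some $(A',B')$ satisfying the ambient constraints defining $\widetilde{\GW}$). The same observation handles the basepoint components directly and dissolves your worry about ``relocating the connecting data'': since only $C=0$ morphisms are needed for essential surjectivity, no bookkeeping with the constrained ambients arises.

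Once $\widetilde{\GW}(R)\simeq r^{-1}(0)$ is known, your step (a) also becomes immediate (an equivalence of categories induces a bijection on $\pi_0$, identifying $\pi_0\widetilde{\GW}(R)$ with the kernel $K$), so the separate stabilization argument you sketch is unnecessary. The remaining passage from $r^{-1}(0)$ to all of $\GW(R)$ then uses exactly the grouplike $H$-space translation you invoke, or equivalently the decomposition $\GW(R)=\coprod_{n\in\Z}r^{-1}(n)$ together with the fact that $\perp(R^k,R^m)$ is a homotopy equivalence onto $r^{-1}(k-m)$.
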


\begin{proof}
For a connected ring $R$, the functor of categories 
$\GW(R) \to \Z: (A,B) \mapsto \rk A - \rk B$ is well defined, and the functor 
(\ref{eqn:NNtoGW}) provides a splitting.
\end{proof}

\begin{remark}[$K$-theory as a homotopy colimit]
\label{rem:SSasHocolim}
Let $\I$ be the category whose objects are the finitely generated non-degenerated subspaces $V \subset H^{\infty}$ and whose maps are all isometric embeddings, that is, a map from $V\subset H^{\infty}$ to $W\subset H^{\infty}$ is a map of $O_X$-modules $f:V \to W$ such that the form on $W$ restricts to the form on $V$ but $f$ does not need to commute with the embeddings $V,W \subset H^{\infty}$.
Composition is composition of $O_X$-module maps.
In the notation of \cite{quillenGrayson}, the category $\I$ is the category $\langle \IPS({H^{\infty}}),\IPS({H^{\infty}})\rangle$.
Note that our index category $\calH$ is naturally a subcategory of $\I$.
It is the subcategory with the same objects as $\I$ and with maps those isometric embeddings $f:V\to W$ which do commute with the embedding $V,W\subset H^{\infty}$.

We define a functor 
from $\I$ to the category of small categories which on objects is 
$$V \mapsto \IPS_{|V|}(V \perp H^{\infty})$$ 
and which sends an isometric embedding $g:V \to W$ to the functor
$$
\renewcommand\arraystretch{1.5}
\begin{array}{ll}
\IPS_{|V|}({V \perp H^{\infty}}) \to \IPS_{|W|}({W \perp H^{\infty}}): & E \mapsto (W-g(V))^-\perp g(E)\\
&  e \mapsto 1_{(W-g(V))^-} \perp  geg^{-1}.
\end{array}$$
Then we have an equality of categories
$$\widetilde{\GW}(R) = \hocolim_{V \in \I}\IPS_{|V|}({V\perp H^{\infty}})$$
where the right hand side is the homotopy colimit of categories as in \cite{Thomason:hocolim} whose construction is recalled in Appendix \ref{dfn:hocolim}.

Replacing $\IPS_{|V|}(V \perp H^{\infty})$ with the full groupoid $\IPS(V \perp H^{\infty})$ of all inner product spaces in $V \perp H^{\infty}$ and taking the homotopy colimit as above yields a model for the Grothendieck-Witt space $GW(R)$ of $R$.
\end{remark}

\section{The maps $\GrO_{\bullet} \to B_{et}O \to \widetilde{\GW}$}
\label{sec:TheMaps}

\begin{definition}
\label{dfn:GrOtoGW}
By Remark \ref{rem:SSasHocolim} that the (reduced) Grothendieck-Witt space is a homotopy colimit.
Therefore, we will need to express the presheaves $\GrO_{\bullet}$ and $\IPS_{\bullet} \simeq B_{et}O$ as homotopy colimits as well.
Recall that the presheaves $\GrO_{\bullet}$ and $\IPS_{\bullet}$ are obtained as the colimits of the simplicial sets $\GrO_{|V|}(V\perp H^{\infty})$ and $\IPS_{|V|}(V\perp H^{\infty})$
over the index category $\calH$ of non-degenerate subspaces $V\subset H^{\infty}$.
Replacing the colimit by their homotopy colimits as in Appendix \ref{dfn:hocolim} yields the definition of the 
presheaves of categories $\catGrO_{\bullet}$ and $\catS_{\bullet}$ which for $R = \Gamma(X,O_X)$ are 
$$
\renewcommand\arraystretch{2}
\begin{array}{lll}
\catGrO_{\bullet}(R)& =& \hocolim_{V\subset H^{\infty}} \GrO_{|V|}(V_R\perp H^{\infty}_R)\\
\catS_{\bullet}(R) &= & \hocolim_{V\subset H^{\infty}} \IPS_{|V|}(V_R\perp H^{\infty}_R)
\end{array}$$
together with weak equivalences of presheaves of simplicial sets
\begin{equation}
\label{eqn:CatGroToCatS}
\catGrO_{\bullet} \stackrel{\sim}{\to} \GrO_{\bullet},\hspace{3ex}\catS_{\bullet} \stackrel{\sim}{\to} \IPS_{\bullet};
\end{equation}
see Lemma \ref{lem:FilteringHocolim=colim}.

The natural transformation of functors $\calH \to \Cat$
which at $V\in \calH$ is the inclusion of zero-simplices $\GrO_{|V|}(V\perp H^{\infty}) \to \IPS_{|V|}(V\perp H^{\infty})$ defines a map of presheaves of categories
$$\catGrO_{\bullet} \to \catS_{\bullet}$$
Furthermore, the inclusion 
$\calH \subset \I$ 
defines a functor
$$\hocolim_{V\in \calH}S_{|V|}(V\perp H^{\infty}) \longrightarrow 
\hocolim_{V\in \I}S_{|V|}(V\perp H^{\infty}),$$
that is, a map of presheaves of categories 
\begin{equation}
\label{eqn:CatSToGW}
\catS_{\bullet} \to \widetilde{\GW}.
\end{equation}
\end{definition}

Write $\Delta R$ for the simplicial ring with $n\mapsto \Delta^nR = R[T_0,...,T_n]/(T_0 + \cdots + T_n - 1)$.

\begin{theorem}
\label{thm:simplHtpyEq}
Let $R$ be a commutative connected regular noetherian ring with $\frac{1}{2}\in R$.
Then the maps (\ref{eqn:CatGroToCatS}) and (\ref{eqn:CatSToGW})
induce weak equivalences of simplicial sets
$$\catGrO_{\bullet}(\Delta R) \stackrel{\sim}{\rightarrow} \catS_{\bullet}(\Delta R) \stackrel{\sim}{\rightarrow} \widetilde{\GW}(\Delta R).$$
\end{theorem}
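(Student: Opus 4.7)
The plan is to establish the two weak equivalences separately. For both, since $\calH$ is filtered, Lemma~\ref{lem:FilteringHocolim=colim} identifies the Thomason hocolims over $\calH$ with ordinary colimits, reducing the comparison to the section-wise colimits $\colim_{V \in \calH} \GrO_{|V|}(V\perp H^{\infty})(\Delta R)$ and $\colim_{V \in \calH} B\IPS_{|V|}(V\perp H^{\infty})(\Delta R)$.

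For the first map $\catGrO_{\bullet}(\Delta R) \to \catS_{\bullet}(\Delta R)$, I would analyze the classifying map $\GrO_{|V|}(V\perp H^{\infty}) \to B\IPS_{|V|}(V\perp H^{\infty}) \simeq B_{et}O(V)$ (the last weak equivalence being Lemma~\ref{lem:IPSisBetO} on affines), viewing it as classifying the universal rank-$|V|$ inner product subbundle on $\GrO_{|V|}(V\perp H^\infty)$. Its etale homotopy fiber is the hermitian Stiefel scheme $\mathrm{Stiefel}(V, V\perp H^{\infty})$ of non-degenerate isometric embeddings $V \hookrightarrow V\perp H^{\infty}$, since this scheme is the total space of an etale-locally free $O(V)$-torsor over $\GrO_{|V|}(V\perp H^{\infty})$ whose classifying map is precisely the map in question. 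The key geometric input is that the colimit over $V \in \calH$ of these Stiefel schemes is $\A^1$-contractible on $\Delta R$. I would establish this via a straight-line $\A^1$-homotopy, using that any two isometric embeddings placed in mutually orthogonal hyperbolic summands of a sufficiently stabilized ambient space can be joined by the convex combination $(1-t)\iota_{0} + t\iota_{1}$, which remains an isometric embedding for all $t \in \A^1$ because orthogonality forces the cross pairings to vanish. Contractibility of the fiber in the colimit then upgrades the classifying map to a weak equivalence.

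For the second map $\catS_{\bullet}(\Delta R) \to \widetilde{\GW}(\Delta R)$, Remark~\ref{rem:SSasHocolim} identifies the target as the Thomason hocolim of the same functor $V \mapsto B\IPS_{|V|}(V\perp H^{\infty})(\Delta R)$ but over $\I$ rather than $\calH$, reducing the question to $\Delta R$-section-wise homotopy cofinality of the inclusion $\calH \subset \I$. For $V \in \I$, the slice category $V \downarrow \calH$ has as objects the isometric embeddings $f \colon V \to W$ with $W \in \calH$, and decomposes into connected components indexed by the composite isometric embeddings $V \to H^{\infty}$; each component is filtered, hence contractible. What remains is to show that the set of such components is weakly contractible after evaluation at $\Delta R$, which is precisely the same $\A^1$-contractibility of the Stiefel-type scheme of isometric embeddings $V \hookrightarrow H^{\infty}$ analyzed in the first map. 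Cofinality follows, and the map of hocolims is a weak equivalence.

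The main obstacle in both parts is the $\A^1$-contractibility of the scheme of non-degenerate isometric embeddings into stabilized hyperbolic space. The straight-line homotopy preserves non-degeneracy only when the two endpoint embeddings land in mutually orthogonal summands, which forces repeated stabilization within $H^{\infty}$; this is possible because $H^{\infty}$ contains infinitely many hyperbolic summands and because the hypothesis $\frac{1}{2} \in R$ (combined with Witt cancellation) allows one to separate arbitrary pairs of embeddings into orthogonal summands after a sufficient enlargement of the ambient space. A secondary subtlety is the compatibility between the etale fiber computation and the section-wise behavior on $\Delta R$, ultimately controlled by Lemma~\ref{lem:IPSisBetO}.
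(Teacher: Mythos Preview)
Your straight-line homotopy fails: if $\iota_0,\iota_1:V\to U$ are isometric embeddings with mutually orthogonal images, then $(1-t)\iota_0+t\iota_1$ pulls back the form on $U$ to $((1-t)^2+t^2)\,\ffi_V$, not to $\ffi_V$, so it is \emph{not} an isometric embedding for $t\notin\{0,1\}$. (In topology one uses $\cos(\pi t/2)\iota_0+\sin(\pi t/2)\iota_1$, but that is not polynomial.) This breaks the contractibility argument you rely on in both parts. The paper establishes contractibility of $\St(V,V\perp H^{\infty}_{\Delta R})$ by a different route (Proposition~\ref{prop:StContract}): it identifies this simplicial set with the coset space $O(H^{\infty}_{\Delta R})\backslash O(V\perp H^{\infty}_{\Delta R})$ via (\ref{OmodOisSt}) and shows the inclusion of simplicial groups is a homotopy equivalence (Lemma~\ref{lem:OH=OVH}) by conjugating with hyperbolic images of elementary matrices, which \emph{are} naively $\A^1$-homotopic to the identity.

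Even granting Stiefel contractibility, your argument for the first map only treats the fibre over the component $[V]$ of $B\IPS_{|V|}(V\perp H^{\infty})$; the components corresponding to subspaces not isometric to $V$ are untouched, and a weak equivalence on one fibre does not give one on the whole space. The paper handles this by fibring $\GrO_{\bullet}$ and $\IPS_{\bullet}$ over the discrete set $\widetilde{GW}_0(R)$ (Lemma~\ref{lem:GrOfibr}), proving the map of fibres over $0$ is a weak equivalence (Proposition~\ref{prop:GrO0=KO0}), and then invoking an $E_{\infty}$-structure coming from the linear isometries operad (Proposition~\ref{prop:GrOEinf}) to propagate this to all components. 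For the second map, your cofinality sketch conflates contractibility of slice categories (a property of $\calH\subset\I$ alone) with $\A^1$-contractibility of a presheaf after evaluation at $\Delta R$; these are different statements, and $\calH\subset\I$ is not homotopy cofinal as a functor of categories. The paper instead appeals directly to the Group Completion Theorem: $\catS_{\bullet}(R)\to\widetilde{\GW}(R)$ is an integral homology isomorphism for every $R$, and after evaluating at $\Delta R$ both sides are group-complete $H$-spaces (the source by the $E_{\infty}$-structure just mentioned, the target by homotopy invariance of $GW$ on regular rings), so the homology isomorphism upgrades to a weak equivalence (Proposition~\ref{prop:SGWHweak}).
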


The proof is in Corollary \ref{cor:GrOisS} and Proposition \ref{prop:SGWHweak} in view of the weak equivalences (\ref{eqn:CatGroToCatS}).

\section{Setting up the proof of Theorem \ref{thm:simplHtpyEq}}
\label{sec:H*Proof}

Let $R$ be a commutative ring, $V$ an inner product space of constant rank over $R$, and $U$ an $R$-module equipped with a symmetric bilinear form.
Denote by 
$$\GrO_V(U) \subset \GrO_{|V|}(U)$$
 the subset of those non-degenerate subspaces $W \subset U$ which are isometric to $V$.
Scalar extension makes $\GrO_V(U)$ into a presheaf on affine $R$-schemes.
Similarly, denote by
$$\IPS_V(U) \subset \IPS_{|V|}(U)$$
the presheaf of full subcategories of those non-degenerate subspaces $W \subset U$ which are isometric to $V$.
The presheaf of objects of $\IPS_V(U)$ is $\GrO_V(U)$.

Let 
$\Iso_d(R)$
denote the set of isometry classes of inner product spaces over $R$ of constant rank $d$.
We define a map of sets
$$\GrO_d(V\perp H^{\infty}_R) \to \Iso_d(R): E\mapsto [E]$$ by sending a finitely generated non-degenerate subspace $E$ of  $V \perp H^{\infty}_R$ to its isometry class $[E] \in Iso_d(R)$.
Similarly, we define a map of categories
$$\IPS_d(V\perp H^{\infty}_R) \to \Iso_d(R): E\mapsto [E].$$
For an inner product space $V$ over $R$ of constant rank $d$, denote by $V:* \to \Iso_d(R)$ the map sending the point $*$ to the class $[V]$ of $V$ in $\Iso_d(R)$.
By definition, we have a cartesian diagram of sets
\begin{equation}
\label{eqn:cartdiag1}
\xymatrix{
\GrO_V(V\perp H^{\infty}_R) \ar[r] \ar[d] & \GrO_{|V|}(V\perp H^{\infty}_R)\ar[d]\\
{*}\ar[r]^V & \Iso_{|V|}(R)
}
\end{equation}
and of categories
\begin{equation}
\label{eqn:cartdiagS1}
\xymatrix{
\IPS_V(V\perp H^{\infty}_R) \ar[r] \ar[d] & \IPS_{|V|}(V\perp H^{\infty}_R)\ar[d]\\
{*}\ar[r]^V & \Iso_{|V|}(R).
}
\end{equation}
Taking the colimit over the non-degenerate subspaces $V \subset H^{\infty}_R$ with transition maps as in Definitions \ref{dfn:infiniteGrO} and \ref{dfn:Sbullet},
we obtain the cartesian diagrams of simplicial sets
$$\xymatrix{
\GrO_{[0]}(R) \ar[r] \ar[d] & \GrO_{\bullet}(R)\ar[d] &\IPS_{[0]}(R) \ar[r] \ar[d] & \IPS_{\bullet}(R)\ar[d] \\
{*}\ar[r] & \widetilde{GW}_0(R) & {*}\ar[r] & \widetilde{GW}_0(R)
}$$
where the upper left corners are 
$\GrO_{[0]}(R) = \colim_{V\subset H^{\infty}_R}\GrO_V(V\perp H^{\infty}_{R})$ and
$\IPS_{[0]}(R) = \colim_{V\subset H^{\infty}_R}\IPS_V(V\perp H^{\infty}_{R})$.

\begin{lemma}
\label{lem:GrOfibr}
Let $R$ be a connected regular ring with $\frac{1}{2} \in R$.
Then the cartesian diagrams of simplicial sets
$$\xymatrix{
\GrO_{[0]}(\Delta R) \ar[r] \ar[d] & \GrO_{\bullet}(\Delta R)\ar[d] &\IPS_{[0]}(\Delta R) \ar[r] \ar[d] & \IPS_{\bullet}(\Delta R)\ar[d] \\
{*}\ar[r] & \widetilde{GW}_0(\Delta R) & {*}\ar[r] & \widetilde{GW}_0(\Delta R)
}$$
are homotopy cartesian, and the lower right corners are constant simplicial sets.
\end{lemma}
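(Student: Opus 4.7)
The plan is to prove the two claims---constancy of the lower right corner and the homotopy cartesian property---separately, the first being a computation with homotopy invariance and the second then being formal.

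\textbf{Step 1: $\widetilde{GW}_0(\Delta R)$ is a constant simplicial set.}  The key input is $\A^1$-homotopy invariance of hermitian $K$-theory for connected regular rings containing $\frac{1}{2}$, proved by Karoubi and extended by Hornbostel \cite{hornbostel:A1reps} as recalled in the introduction.  Concretely, for every $n$ the structure map $R\hookrightarrow \Delta^n R$ identifies $\Delta^n R\cong R[T_1,\dots,T_n]$ as a polynomial algebra over $R$ (which inherits connectedness, regularity, and $\tfrac{1}{2}\in$), and induces an isomorphism
$$\widetilde{GW}_0(R)\xrightarrow{\ \cong\ } \widetilde{GW}_0(\Delta^n R).$$
Every simplicial operator $\Delta^n R\to \Delta^m R$ is an $R$-algebra map and so commutes with the canonical inclusions from $R$.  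By functoriality of $\widetilde{GW}_0$, every such operator acts as the identity under the identification $\widetilde{GW}_0(\Delta^\bullet R)\cong \widetilde{GW}_0(R)$, so the simplicial set $\widetilde{GW}_0(\Delta R)$ is constant.

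\textbf{Step 2: Strict pullback along a map to a constant simplicial set is a homotopy pullback.}  The squares (\ref{eqn:cartdiag1}) and (\ref{eqn:cartdiagS1}) are cartesian by construction (in $\mathbf{Set}$ respectively $\Cat$).  Filtered colimits over $V\subset H^{\infty}_R$ preserve finite limits in $\mathbf{Set}$, taking nerves is a right adjoint, and evaluation at $\Delta R$ is levelwise, so the two displayed squares in the lemma are honest strict pullbacks of simplicial sets.

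Now I invoke the elementary observation: if $Y$ is a constant simplicial set, $X\to Y$ any map, and $y\in Y_0$, then the strict fibre $X_y = X\times_Y\{y\}$ coincides with the homotopy fibre.  Indeed, the homotopy fibre is modelled by $X\times_Y P_y Y$ where $P_y Y = \{y\}\times_Y Y^{\Delta^1}$ is the path space; for constant $Y$ every simplex of $Y^{\Delta^1}$ is constant, so $Y^{\Delta^1}=Y$ and $P_y Y=\{y\}$.  Applied to the base $Y=\widetilde{GW}_0(\Delta R)$, which is constant by Step 1, this upgrades the strict pullback squares to homotopy pullback squares, as required.

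\textbf{Main obstacle.}  The only substantive input is the $\A^1$-invariance of $\widetilde{GW}_0$ on connected regular rings with $\frac{1}{2}$; this is the classical theorem of Karoubi/Hornbostel that has already been cited and is central to the paper.  Once that is in hand, the remainder of the argument is a short formal check using the constancy of the base and the elementary computation of the path space in a discrete simplicial set.
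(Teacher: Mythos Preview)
Your proof is correct and follows essentially the same approach as the paper: first use homotopy invariance of $GW_0$ on regular rings with $\tfrac{1}{2}$ (hence of its reduced version $\widetilde{GW}_0$ on connected such rings) to see that the base is a constant simplicial set, and then observe that a strict pullback over a discrete base is automatically a homotopy pullback. The only cosmetic difference is in the second step: the paper simply notes that the lower horizontal map $*\to \widetilde{GW}_0(\Delta R)$ is a fibration between constant simplicial sets (so the cartesian square is homotopy cartesian by right properness), whereas you compute the path space of a discrete simplicial set directly to identify strict and homotopy fibres. Both justifications are equivalent and equally short.
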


\begin{proof}
The Grothendieck-Witt group $GW_0$ is homotopy invariant for regular rings (with $2$ a unit).
For connected rings, the kernel $\tilde{GW}_0$ of the rank map $GW_0 \to \Z$ is therefore also homotopy invariant.
It follows that the lower right corner of the diagram is a constant simplicial set.
Hence,  the lower horizontal map is a fibration of (constant) simplicial sets.
\end{proof}

Diagram (\ref{eqn:cartdiag1}) maps to diagram (\ref{eqn:cartdiagS1}) via the inclusion of zero simplices.
By Lemma \ref{lem:GrOfibr}, we have a map of homotopy fibrations 
\begin{equation}
\label{eqn:GrOKOdiagram}
\xymatrix{
\GrO_{[0]}(\Delta R) \ar[r] \ar[d] & \GrO_{\bullet}(\Delta R) \ar[r] \ar[d]&\widetilde{GW}_0(\Delta R) \ar[d]^1\\
\IPS_{[0]}(\Delta R) \ar[r] & \IPS_{\bullet}(\Delta R) \ar[r] &\widetilde{GW}_0(\Delta R).
}
\end{equation}

The rest of this section is devoted to the proof of the following.

\begin{proposition}
\label{prop:GrO0=KO0}
Let $R$ be a commutative ring with $\frac{1}{2} \in R$
and $V$ an inner product space over $R$.
Then we have weak equivalences of simplicial sets
$$\GrO_V(V\perp H^{\infty}_{\Delta R}) \stackrel{\sim}{\to} \IPS_V(V\perp H^{\infty}_{\Delta R}) \stackrel{\sim}{\leftarrow} BO(V_{\Delta R}),$$
where the first map is inclusion of zero-simplices and the second map is
the inclusion of the endomorphism category of the object $V$ into $\IPS_V(V\perp H^{\infty})$.
In particular, we have 
weak equivalences of simplicial sets
$$\GrO_{[0]}(\Delta R) \stackrel{\sim}{\to} \IPS_{[0]}(\Delta R) \stackrel{\sim}{\leftarrow} BO({\Delta R}).$$
\end{proposition}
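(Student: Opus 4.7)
The statement decomposes into two separate weak equivalences: the inclusion of objects $\GrO_V\hookrightarrow \IPS_V$, and the inclusion $BO(V_{\Delta R})\to\IPS_V$ of the endomorphism category at the standard embedding. I would treat them in reverse order and then pass to colimits for the final assertion about $\GrO_{[0]}$ and $\IPS_{[0]}$.

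The equivalence $BO(V_{\Delta R})\stackrel{\sim}{\to}\IPS_V(V\perp H^{\infty}_{\Delta R})$ is essentially formal. For any commutative ring $R'$, every object of the groupoid $\IPS_V(V\perp H^{\infty}_{R'})$ is isometric to $V_{R'}$ by definition, hence mutually isomorphic in the groupoid, and the endomorphism group of the standard object $V\hookrightarrow V\perp H^{\infty}$, $v\mapsto(v,0)$, is precisely $O(V_{R'})$. The inclusion $BO(V_{R'})\hookrightarrow\IPS_V(V\perp H^{\infty}_{R'})$ is therefore fully faithful and essentially surjective, an equivalence of groupoids at every level $R'=\Delta^{n}R$, and hence a weak equivalence after nerving and diagonalising.

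For the first equivalence I would introduce the ``orthogonal Stiefel'' presheaf
$$F(R')\;=\;\bigl\{\phi:V_{R'}\hookrightarrow V_{R'}\perp H^{\infty}_{R'}\ \bigm|\ \phi\text{ an isometry onto a subbundle}\bigr\},$$
equipped with the free right action of $O(V)$ by precomposition; the strict orbit space is $F/O(V)=\GrO_V(V\perp H^{\infty})$. The resulting $O(V)$-torsor $F\to\GrO_V$ yields a fibre sequence
$$O(V_{R'})\;\longrightarrow\;F(R')\;\longrightarrow\;\GrO_V(V\perp H^{\infty}_{R'})$$
which, after passing to $\Delta R$ and combining with the equivalence of the preceding paragraph, reduces the claim to showing that $F(\Delta R)$ is weakly contractible: the long exact sequence of the fibration then forces $\GrO_V(\Delta R)\simeq BO(V_{\Delta R})$ compatibly with the inclusion into $\IPS_V(\Delta R)$.

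The contractibility of $F(\Delta R)$ is the main obstacle, and is the orthogonal analogue of the $\A^{1}$-contractibility of the infinite linear Stiefel variety. I would prove it by an Eilenberg-swindle argument: for any $\phi\in F(\Delta^{n}R)$, construct a polynomial family over $\Delta^{1}\Delta^{n}R$ interpolating between $\phi$ and the standard embedding $v\mapsto(v,0)$, using the shift isomorphism $V\perp H^{\infty}\cong V\perp H\perp H^{\infty}$ together with an ``orthogonal Whitehead lemma'' producing polynomial one-parameter families of isometries in $O(V\perp H^{\bullet})$ over $\Z[\tfrac{1}{2}]$---precisely where the hypothesis $\frac{1}{2}\in R$ enters. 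The passage to colimits over the cofinal family $\{V\subset H^{\infty}_{R}\}$ of non-degenerate subspaces is then immediate and yields the final assertion.
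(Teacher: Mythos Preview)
Your overall architecture matches the paper's: both reduce the first equivalence to the contractibility of the orthogonal Stiefel space $\St(V,V\perp H^{\infty})(\Delta R)$ (your $F$), and both treat the second equivalence as the formal fact that the inclusion of a one-object subgroupoid into a connected groupoid is an equivalence. Two points differ.

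\textbf{Contractibility of $F(\Delta R)$.} Your swindle outline is plausible but the paper proceeds differently and more economically. It identifies $\St(V,V\perp H^{\infty})$ with the coset space $O(H^{\infty})\backslash O(V\perp H^{\infty})$ (the left $O(V\perp H^{\infty})$-action on $\St$ is transitive, with stabiliser of the standard embedding equal to $1_V\perp O(H^{\infty})$), and then proves that the inclusion $O(H^{\infty}_{\Delta R})\hookrightarrow O(V\perp H^{\infty}_{\Delta R})$ is itself a homotopy equivalence of simplicial groups (Lemma~\ref{lem:OH=OVH}). The elementary-matrix argument you allude to lives entirely inside that lemma: one conjugates the shift $A\mapsto 1_H\perp A$ back to the standard inclusion by hyperbolic images of permutation matrices, which are products of elementaries and hence naively $\A^{1}$-homotopic to the identity. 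Passing to the quotient of an equivariant homotopy equivalence (Proposition~\ref{appx:XmodG=YmodG}) gives contractibility at one stroke. Your direct swindle can be made to work, but you will need precisely this Whitehead-type input inside $O$, and you must produce a contraction that is coherent across all simplicial levels, not merely a path from each $\phi\in F(\Delta^{n}R)$ to the base point; the group-quotient description handles this automatically.

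\textbf{Compatibility with the stated map.} Your step from ``$\GrO_V(\Delta R)$ and $\IPS_V(\Delta R)$ are both $BO(V_{\Delta R})$'' to ``the inclusion of zero-simplices is a weak equivalence'' is not yet justified: you must show the \emph{given} map realises the equivalence, not merely that source and target are abstractly equivalent. The paper closes this by introducing an auxiliary category $\calE_V(U)$ whose set of objects is $\St(V,U)$ and in which every object is initial; then $\calE_V$ is contractible, carries a free right $O(V)$-action with quotient $\IPS_V$, and the $O(V)$-equivariant inclusion of zero-simplices $\St(V,U)\hookrightarrow\calE_V(U)$ descends, upon quotienting, to exactly the inclusion $\GrO_V\hookrightarrow\IPS_V$. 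Proposition~\ref{appx:XmodG=YmodG} then finishes. This is a small but genuine gap in your sketch, and the fix is precisely this auxiliary category.
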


Let 
$$O(V\perp H^{\infty}) = \colim_{W\subset V\perp H^{\infty}}O(W)$$
be the infinite orthogonal group based on $V\perp H^{\infty}$.
It is the filtered colimit over the poset of finitely generated non-degenerate subspaces $W$ of $V\perp H^{\infty}$ of the isometry groups $O(W)$ of $W$ where
for an inclusion $W \subset W'$, we embed $O(W)$ into $O(W')$ via $a \mapsto a \perp id_{W'-W}$.
Our next aim is to identify the simplicial set 
$\GrO_V(V\perp H^{\infty}_{\Delta R})$ with the simplicial set $BO(V_{\Delta R})$, up to homotopy.
We will need the following lemma.

\begin{lemma}
\label{lem:OH=OVH}
Let $V$ be an inner product space over a commutative ring $R$ with $\frac{1}{2}\in R$.
Then the inclusion $H^{\infty} \subset V\perp H^{\infty}$ induces a homotopy equivalence of simplicial groups
$$O(H^{\infty}_{\Delta R}) \to O(V\perp H^{\infty}_{\Delta_R}):A \mapsto 1_V\perp A.$$
\end{lemma}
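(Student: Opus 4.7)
The plan is to prove this by an Eilenberg-swindle argument, exhibiting an explicit simplicial homotopy inverse to the inclusion $\iota: O(H^\infty_{\Delta R}) \to O(V \perp H^\infty_{\Delta R})$.

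The key classical input is the isometry $V \perp V^- \cong H^{|V|}$ (valid since $\frac{1}{2}\in R$, given by the standard change of variables $(v, w) \mapsto (\frac{v+w}{2}, \frac{v-w}{2})$), where $V^-$ denotes $V$ with the negated form. This lets one absorb any copy of $V$ into the infinite hyperbolic tail at the cost of inserting a compensating $V^-$, which can then be cancelled by the Eilenberg-swindle identification $H^\infty \cong H^\infty \perp H^\infty$. Using these ingredients, I would construct a simplicial group homomorphism $r: O(V \perp H^\infty_{\Delta R}) \to O(H^\infty_{\Delta R})$ as the composite that stabilizes an isometry of $V \perp H^\infty$ by $V^-$, reinterprets $V\perp V^-$ as $H^{|V|}$, and then reindexes $H^{|V|}\perp H^\infty$ as $H^\infty$.

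I would then verify that $r \circ \iota$ and $\iota \circ r$ are simplicially homotopic to the respective identities on $O(H^\infty_{\Delta R})$ and $O(V \perp H^\infty_{\Delta R})$. Up to the reindexing isometries, each composite is the ``stabilize by $H^{|V|}$'' endomorphism $A \mapsto 1_{H^{|V|}} \perp A$, which is null-homotopic by the standard swindle: the two reindexings $H^\infty \cong H^\infty\perp H^\infty$ given by ``evens/odds'' versus ``shift by one'' agree as maps of underlying modules, but the induced endomorphisms of the isometry group differ by the shift, yielding an explicit $1$-simplex in $O(H^\infty)(\Delta^1 R)$ that connects the identity to the shift endomorphism.

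The main obstacle is the simplicial coherence of the swindle: one must ensure that the two reindexings are compatible with the simplicial structure on $\Delta R$, and verify that all intermediate maps factor through the correct orthogonal groups (rather than through an ambient larger isometry group) and are compatible with the filtered colimit structure over the index category $\calH$. For $V$ that is not globally free, one works Zariski-locally on $\Spec R$, where $V$ becomes free, and patches the resulting local homotopies.
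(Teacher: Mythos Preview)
Your overall shape is right—the crux is indeed that the ``shift'' endomorphism $A\mapsto 1_{H^m}\perp A$ of $O(H^\infty_{\Delta R})$ is homotopic to the identity—but there are two genuine gaps.

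First, the isometry $V\perp V^-\cong H^{|V|}$ that you invoke is only valid when the underlying module of $V$ is free; for general projective $V$ one only gets $V\perp V^-\cong H(V)$, the hyperbolic space on the module $V$, and $H(V)\not\cong H^{|V|}$ in general. Your proposed fix of working Zariski-locally and patching does not work: you would be patching \emph{simplicial homotopies}, not just maps, and there is no mechanism for gluing local homotopies into a global one here. The paper sidesteps this completely. It first treats the case $V=H$ (hence $V=H^n$ by induction), and then for arbitrary $V$ chooses an isometric embedding $V\subset H^n$ and runs a two-out-of-three argument on the chain
\[
O(H^\infty_{\Delta R})\to O(V\perp H^\infty_{\Delta R})\to O(H^n\perp H^\infty_{\Delta R})\to O(H^n\perp V\perp H^\infty_{\Delta R}),
\]
using that $V\perp H^\infty\cong H^\infty$ so that the composites of consecutive pairs are already known equivalences.

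Second, your description of the swindle that produces the homotopy from the shift to the identity is not a valid argument as written: two reindexings $H^\infty\cong H^\infty\perp H^\infty$ that ``agree on underlying modules'' are the same isometry and cannot induce different endomorphisms of $O(H^\infty)$. What is actually needed, and what the paper does, is the Whitehead-lemma trick: the shift $j$ and the standard inclusion $i$ from $O(H^n)$ into a larger $O(H^{2n+2})$ differ by conjugation by $g=H(h\oplus h^{-1})$ with $h$ a permutation matrix; since $h\oplus h^{-1}$ is a product of elementary matrices, $g$ is naively $\A^1$-homotopic to $1$, so $i$ and $j$ become simplicially homotopic after evaluating at $\Delta R$. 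Passing to the colimit, $j$ induces the identity on all $\pi_k$, hence is a weak equivalence. This is the concrete content your ``explicit $1$-simplex'' was gesturing at, but it lives at the level of finite stages and uses elementary matrices, not a bijection-of-$\N$ swindle.
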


\begin{proof}
We first prove the claim when $V=H$.
So, we need to show that $j:O(H^{\infty}_{\Delta_R}) \to O(H^{\infty}_{\Delta_R}): A \mapsto 1_H\perp A$ is a homotopy equivalence.
The point is that the two inclusions $j: O(H^n) \to O(H^{2n+2}): A \mapsto 1_H\perp A \perp 1_{H^{n+1}}$ and 
$i: O(H^n) \to O(H^{2n+2}): A \mapsto A \perp 1_{H^{n+2}}$
are naively $\A^1$-homotopic (see Appendix \ref{dfn:naiveA1htpy} for a definition).
This is because $i = g \cdot j \cdot g^{-1}$ where 
$g= H(h \oplus h^{-1})$, $H: GL_{n+1}(R) \to O(H^{n+1})$ is the hyperbolic map and $h = \left( \begin{smallmatrix} 0 & 1 \\ I_n & 0\end{smallmatrix}\right) \in GL_{n+1}(R)$ with $I_n\in GL_n(R)$ the identity matrix.
Now, $h\oplus h^{-1} \in GL_{2n+2}(R)$ is a product of elementary matrices each of which is naively $\A^1$-homotopic to the identity by an elementary $\A^1$-homotopy.
Therefore, $g$ is naively $\A^1$-homotopic to the identity and the inclusions 
$j = g\cdot i \cdot g^{-1}: O(H^n_{\Delta R}) \to O(H^{2n+2}_{\Delta R})$ and 
$i: O(H^n_{\Delta R}) \to O(H^{2n+2}_{\Delta R})$ are simplicially homotopic via a base-point preserving homotopy, by Lemma \ref{lem:naiveA1htpy}.
It follows that $j: \pi_kO(H^{\infty}_{\Delta R}) = \colim_n \pi_kO(H^{n}_{\Delta R}) \to \pi_kO(H^{\infty}_{\Delta R})$ is the identity map, hence an isomorphism for all $k\geq 0$.
Since $O(H^{\infty}_{\Delta R})$ is an $H$-group, this implies the claim for $V=H$.

By induction, the claim is true for $V=H^n$.
For general $V$, choose an embedding $V \subset H^n$.
Then the composition of the first two and the composition of the last two maps in the following diagram are homotopy equivalences 
$$
O(H^{\infty}_{\Delta R}) \to O(V\perp H^{\infty}_{\Delta_R}) \to O(H^n \perp H^{\infty}_{\Delta R}) \to O(H^n \perp V \perp H^{\infty}_{\Delta_R})$$
since $V\perp H^{\infty} \cong H^{\infty}$.
This finishes the proof of the Lemma.
\end{proof}

Let $V = (V,\phi_V)$ be an inner product space over a commutative ring $R$, and  let $U = (U,\phi_U)$ be an $R$-module equipped with a symmetric bilinear form.
For a commutative $R$-algebra $A$,  let
$$\St(V,U)(A)$$
be the set of isometric embeddings $f:V_A \to U_A$ over $A$, that is, the set of those $A$-linear maps $f:V_A \to U_A$ such that $\phi_V=f^*\phi_Uf$.
Then $\St(V,U)$ is a presheaf on affine $R$-schemes.

The group $O(V\perp H^{\infty})$ acts transitively from the left on the set $\St(V,V\perp H^{\infty})$ via $(f,g)\mapsto f\circ g$.
The stabilizer of the element $i_V:V \to V\perp H^{\infty}:v\mapsto (v,0)$ of $\St(V,V\perp H^{\infty})$ is the subgroup $O(H^{\infty}) \subset O(V\perp H^{\infty}): A \mapsto 1_V\perp A$.
Therefore, we obtain an isomorphism of presheaves of sets
\begin{equation}
\label{OmodOisSt}
O(H^{\infty})\backslash O(V \perp H^{\infty}) \cong \St(V,V\perp H^{\infty}): f \mapsto f\circ i_V.
\end{equation}

\begin{proposition}
\label{prop:StContract}
Let $V$ be an inner product space over a commutative ring $R$ with $\frac{1}{2}\in R$.
Then the simplicial set
$$\St(V,V\perp H^{\infty}_{\Delta R})$$
is a contractible Kan set.
\end{proposition}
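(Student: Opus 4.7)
The plan is to identify $\St(V, V\perp H^{\infty}_{\Delta R})$ with a quotient of a simplicial group by a simplicial subgroup, and then exploit the homotopy equivalence already established in Lemma \ref{lem:OH=OVH}. First, I would apply the isomorphism of presheaves of sets (\ref{OmodOisSt}) level-wise at the ring $\Delta^n R$ for each $n\geq 0$ and assemble these into an isomorphism of simplicial sets
$$\St(V, V\perp H^{\infty}_{\Delta R}) \;\cong\; O(H^{\infty}_{\Delta R})\backslash O(V\perp H^{\infty}_{\Delta R}).$$
Here $O(H^{\infty}_{\Delta R})$ sits inside the simplicial group $O(V\perp H^{\infty}_{\Delta R})$ as the simplicial subgroup $A\mapsto 1_V\perp A$, and the quotient is taken with respect to the (free) left translation action of this subgroup.

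Next I would invoke the classical fact that for any simplicial group $G$ and simplicial subgroup $H$, the coset space $H\backslash G$ is a Kan complex and the projection $G\to H\backslash G$ is a principal $H$-bundle, in particular a Kan fibration (cf.\ May, \emph{Simplicial Objects in Algebraic Topology}, \S18). Applied to our situation this yields simultaneously that $\St(V, V\perp H^{\infty}_{\Delta R})$ is a Kan set and that there is a fibration sequence of simplicial sets
$$O(H^{\infty}_{\Delta R})\;\longrightarrow\; O(V\perp H^{\infty}_{\Delta R})\;\longrightarrow\;\St(V, V\perp H^{\infty}_{\Delta R}).$$

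Finally, Lemma \ref{lem:OH=OVH} tells me that the first map in this sequence is a homotopy equivalence of simplicial groups. The long exact sequence of homotopy groups of a Kan fibration then forces $\pi_n\,\St(V, V\perp H^{\infty}_{\Delta R})=0$ for every $n\geq 0$, so the base is contractible as desired. I expect the main subtlety to lie in the middle step: pinning down that the coset quotient is genuinely a Kan fibration and that $\St(V, V\perp H^{\infty}_{\Delta R})$ inherits the Kan property from the simplicial group $O(V\perp H^{\infty}_{\Delta R})$. A possible alternative would be to construct an explicit contracting $\A^1$-homotopy by using $V\perp H^{\infty}\cong H^{\infty}$ together with an Eilenberg-swindle-type shift on the $H^{\infty}$ factor, but routing the argument through the simplicial principal bundle is considerably cleaner and reuses Lemma \ref{lem:OH=OVH} verbatim.
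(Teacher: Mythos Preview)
Your proposal is correct and follows essentially the same route as the paper: identify $\St(V,V\perp H^{\infty}_{\Delta R})$ with the coset space $O(H^{\infty}_{\Delta R})\backslash O(V\perp H^{\infty}_{\Delta R})$ via (\ref{OmodOisSt}), invoke Lemma \ref{lem:OH=OVH}, and use the standard simplicial-group facts (the paper's Propositions \ref{appx:GmodHKan} and \ref{appx:XmodG=YmodG}) to get the Kan property and contractibility. The only cosmetic difference is that the paper deduces contractibility by applying the ``equivariant weak equivalence passes to quotients'' lemma to the inclusion $O(H^{\infty}_{\Delta R})\hookrightarrow O(V\perp H^{\infty}_{\Delta R})$, whereas you phrase it via the long exact sequence of the associated fibration; these are equivalent.
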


\begin{proof}
Contractibility follows from Proposition \ref{appx:XmodG=YmodG} applied to the $O(H^{\infty}_{\Delta^{\bullet}R})$ equivariant homotopy equivalence $O(H^{\infty}_{\Delta^{\bullet}R}) \subset O(V\perp H^{\infty}_{\Delta^{\bullet}R})$ of Lemma \ref{lem:OH=OVH}
together with the isomorphism (\ref{OmodOisSt}).
The simplicial set is fibrant, by Proposition \ref{appx:GmodHKan}.
\end{proof}

The group $O(V)$ of isometries of $V$ acts from the right on $\St(V,U)$ via $(f,g)\mapsto fg$ for $f\in \St(V,U)$ and $g\in O(V)$.
The map $\St(V,U) \to \GrO_V(U): f \mapsto \im(f)$ factors through the quotient map $\St(V,U) \to \St(V,U)/O(V)$ and yields an isomorphism of (presheaves of) sets
\begin{equation}
\label{eqn:St/O=GrO}
\St(V,U)/O(V) \cong \GrO_V(U): f \mapsto \im(f).
\end{equation}

For an inner product space $V$ over $R$ and a symmetric bilinear form $R$-module $U$, let $\calE_V(U)$ be the category whose objects are the $R$-module maps $V\to U$ respecting forms and where a map from $a:V \to U$ to $b:V \to U$ is a map $c:\im(a) \to \im(b)$ of inner product spaces such that the diagram
$$\xymatrix{
V \ar[r]^a \ar[rd]_b & \im(a) \ar[d]^c\\
& \im(b)
}
$$
commutes.
Note that the set of objects of $\calE_V(U)$ is the set $\St(V,U)$.
The group $O(V)$ acts freely from the right on $\calE_V(U)$ via 
$$\calE_V(U)\times O(V) \to \calE_V(U): (a,g)\mapsto ag,$$
the inclusion of zero simplices $\St(V,U) \to \calE_V(U)$ is $O(V)$-equivariant, and the functor $\calE_V(U) \to \IPS_V(U):a \mapsto \im(a)$ induces an isomorphism of simplicial sets
$$\calE_V(U)/O(V) \cong \IPS_V(U).$$

\begin{lemma}
\label{lem:EVcontr}
The category $\calE_V(V\perp H^{\infty})$ is contractible.
\end{lemma}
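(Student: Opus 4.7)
The plan is to show that $\calE_V(U)$ with $U = V\perp H^{\infty}$ is (equivalent to) the indiscrete category on the set $\St(V,U)$, i.e.\ a category having exactly one morphism between any two objects. Since the nerve of such a (nonempty) indiscrete category is contractible (it is a standard cone on any chosen object), this immediately gives contractibility.

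The first step is to unpack what an object $a: V \to U$ is: because $V$ carries a non-degenerate form and $a$ preserves forms, $a$ is automatically an isometric embedding, so the corestriction $a: V \to \im(a)$ is an isometry of inner product spaces (with $\im(a)$ equipped with the form restricted from $U$). In particular, $a$ admits an inverse isometry $a^{-1}: \im(a) \to V$.

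The key step is to verify uniqueness and existence of morphisms. Given $a, b \in \St(V,U)$, any morphism $c: \im(a) \to \im(b)$ satisfying $c \circ a = b$ must equal $b \circ a^{-1}$, which shows uniqueness. For existence one defines $c := b \circ a^{-1}$ and checks it is a map of inner product spaces: for $x = a(v)$ and $y = a(w)$ in $\im(a)$,
$$\langle c(x), c(y) \rangle = \langle b(v), b(w) \rangle = \phi_V(v,w) = \langle a(v), a(w) \rangle = \langle x, y \rangle,$$
using that both $a$ and $b$ preserve forms. The triangle $c \circ a = b$ holds by construction. Composition of morphisms is forced by uniqueness.

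Finally, the set $\St(V, V \perp H^{\infty})$ is nonempty, since the canonical inclusion $i_V: v \mapsto (v,0)$ lies in it. Since $\calE_V(V \perp H^{\infty})$ is thereby the nonempty indiscrete groupoid on this set, its nerve is contractible. I do not anticipate any real obstacle in this argument; the statement is essentially a definitional observation, and the only mildly delicate point is checking that the candidate morphism $b \circ a^{-1}$ actually preserves the restricted forms, which is immediate from form-preservation of $a$ and $b$.
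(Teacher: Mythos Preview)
Your proof is correct and essentially the same as the paper's: the paper observes that every object of $\calE_V(V\perp H^{\infty})$ is initial, which is exactly your statement that the category is indiscrete on $\St(V,V\perp H^{\infty})$. Your write-up simply unpacks the verification of the unique morphism $c = b\circ a^{-1}$ in more detail than the paper does.
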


\begin{proof}
The category $\calE_V(V\perp H^{\infty})$ is non-empty as it has $V \to V\perp H^{\infty}: v\mapsto (v,0)$ as object.
Every object in $\calE_V(V\perp H^{\infty})$ is an initial object.
Hence, this category is contractible.
\end{proof}

\begin{proof}[Proof of Proposition \ref{prop:GrO0=KO0}]
The map of simplicial sets
$$\St(V,V\perp H^{\infty})(\Delta R) \to \calE_V(V\perp H^{\infty})(\Delta R)$$
is $O(V_{\Delta R})$-equivariant, the simplicial group $O(V_{\Delta R})$ acts freely on both sides, and the map is a non-equivariant weak equivalence (of contractible simplicial sets), by Proposition \ref{prop:StContract} and Lemma \ref{lem:EVcontr}.
By Lemma \ref{appx:XmodG=YmodG}, the map on quotient simplicial sets
$\GrO_V(V\perp H^{\infty}_{\Delta R}) \to \IPS_V(V\perp H^{\infty}_{\Delta R})$
is also a weak equivalence.
Finally, the inclusion $BO(V) \subset B\IPS_V(V\perp H^{\infty})$ is a weak equivalence since $\IPS_V(V\perp H^{\infty})$ is a connected groupoid.
\end{proof}

\section{$E_{\infty}$-spaces and the end of the proof of Theorem \ref{thm:simplHtpyEq}}
\label{sec:EinftyEndofPf}

Even though diagram (\ref{eqn:GrOKOdiagram}) is a map of homotopy fibrations which is a weak equivalence of simplicial sets on base and fibres, we can't conclude yet that the map on total spaces is a weak equivalence as well.
This will be done by establishing that all maps in diagram (\ref{eqn:GrOKOdiagram}) are maps of $E_{\infty}$-spaces; see Proposition \ref{prop:GrOEinf}.

Informally, the ``linear isometries'' operad $\E$ has $n$-th space the space of isometric embeddings of $(H^{\infty})^n$ into $H^{\infty}$ with $\Sigma_n$ permuting the $n$ factors in $(H^{\infty})^n$.
More precisely,
for a commutative ring $R$, let $\E(n)(R)$ be the set
$$\E(n)(R)=\lim_{V \subset H_R^{\infty}}\St(V^n,H_{R}^{\infty}).$$
The inverse limit ranges over (a cofinal subset of) the category $\calH$ of all finitely generated non degenerate $V \subset H^{\infty}_R$, and $V^n = V \perp ... \perp V$ denotes $n$-fold orthogonal sum.
The permutation group $\Sigma_n$ on $n$-letters acts on $\E(n)$ by permuting the factors of $V^n$.
This action is free.
By Proposition \ref{prop:StContract}, the simplicial sets $\St(V^n,H_{\Delta R}^{\infty})$ are contractible Kan sets.
By Proposition \ref{appx:GmodHKan}, for $W \subset V$, the transition maps
$\St(V^n,H_{\Delta R}^{\infty}) \to \St(W^n,H_{\Delta R}^{\infty})$ are Kan fibrations in view of the identification (\ref{OmodOisSt}).
It follows that 
$$\E(n)(\Delta R) = \lim_k\St(H^k\perp ... \perp H^k,H^{\infty})(\Delta R)$$
is a contractible Kan set with a free $\Sigma_n$-action.
We define the structure maps of the operad $\E$ by
$$\E(k)\times \E(j_1) \times \cdots\times \E(j_k) \to \E(j_1 + \cdots +j_k) : f, g_1,...,g_k \mapsto f\circ (g_1 \perp ... \perp g_k)$$
Thus, we have proved the following lemma.

\begin{lemma}
\label{lem:EisEinfty}
Let $R$ be a commutative ring with $\frac{1}{2}\in R$.
Then the operad $\E(\Delta R)$ defined above is an $E_{\infty}$-operad.
\Qed
\end{lemma}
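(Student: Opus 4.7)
The plan is to verify the three defining properties of an $E_\infty$-operad, namely that each $\E(n)(\Delta R)$ is (a) a Kan complex, (b) weakly contractible, and (c) carries a free $\Sigma_n$-action, and then to check that the composition maps displayed above satisfy the operad associativity and equivariance axioms. Much of the work is already packaged in the paragraph preceding the statement; what remains is to assemble it.

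For (a) and (b), I would begin from the observation that, for each finitely generated non-degenerate $V \subset H^\infty_R$, applying Proposition \ref{prop:StContract} to the inner product space $V^n = V \perp \cdots \perp V$ shows that $\St(V^n, H^\infty_{\Delta R})$ is a contractible Kan complex. To pass to the inverse limit $\E(n)(\Delta R) = \lim_k \St((H^k)^n, H^\infty_{\Delta R})$, I need the transition maps in the tower to be Kan fibrations. Using the isomorphism (\ref{OmodOisSt}) levelwise, each restriction $\St((H^{k+1})^n, H^\infty_{\Delta R}) \to \St((H^k)^n, H^\infty_{\Delta R})$ arises as a quotient of a surjection of simplicial groups, so Proposition \ref{appx:GmodHKan} identifies it as a Kan fibration. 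A standard inverse-limit argument then shows that the limit of a tower of Kan fibrations between contractible Kan sets is again a contractible Kan set, yielding (a) and (b).

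For (c), freeness of the $\Sigma_n$-action is a direct check. If $f \in \St(V^n, H^\infty)$ satisfies $f \circ \sigma = f$ for a nontrivial $\sigma \in \Sigma_n$ acting by permuting the orthogonal summands, then restricting to the $i$-th summand forces $f(V_i) = f(V_{\sigma^{-1}(i)})$ for every $i$. Since $f$ is an isometric embedding of $V^n$, its restrictions to the $n$ summands produce $n$ mutually orthogonal copies of $V$ inside $H^\infty$, which for $V \neq 0$ cannot coincide for distinct indices, so $\sigma$ must be the identity. Freeness passes to the inverse limit since $\Sigma_n$ is a finite group acting compatibly on the tower. It then remains to verify the operad associativity and $\Sigma$-equivariance axioms for the composition $(f, g_1, \ldots, g_k) \mapsto f \circ (g_1 \perp \cdots \perp g_k)$, which is routine bookkeeping with orthogonal direct sums and permutations of summands.

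The only delicate step is setting up the tower of Kan fibrations correctly, since one must exhibit the right simplicial group structure on $\St(V^n, H^\infty_{\Delta R})$ to invoke Proposition \ref{appx:GmodHKan}; this is exactly what (\ref{OmodOisSt}) provides, applied to the inner product space $V^n$ in place of $V$. Once the tower is in hand, the inverse-limit contractibility and the freeness of the symmetric group action are both standard, and the operad axioms are formal.
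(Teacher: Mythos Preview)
Your proposal is correct and follows exactly the route the paper takes: the paper's proof is really the paragraph preceding the lemma (the lemma itself is marked with a \Qed), and you have simply unpacked that paragraph, invoking Proposition~\ref{prop:StContract} for contractibility, the identification~(\ref{OmodOisSt}) together with Proposition~\ref{appx:GmodHKan} for the transition maps being Kan fibrations, and then taking the inverse limit. Your explicit verification of freeness of the $\Sigma_n$-action and of the operad axioms fills in details the paper leaves implicit, but the structure of the argument is the same.
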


\begin{proposition}
\label{prop:GrOEinf}
For any commutative ring $R$ with $\frac{1}{2}\in R$, the map 
$$\GrO_{\bullet}(\Delta R) \to \IPS_{\bullet}(\Delta R)$$
 is a map of group complete $E_{\infty}$-spaces.
\end{proposition}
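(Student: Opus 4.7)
\medskip
\noindent\textbf{Proof proposal.} The plan is to exhibit compatible $\E(\Delta R)$-actions on $\GrO_{\bullet}(\Delta R)$ and $\IPS_{\bullet}(\Delta R)$ for which the inclusion-of-zero-simplices map is equivariant, and then to establish group completeness using the fibration of Lemma \ref{lem:GrOfibr} together with Proposition \ref{prop:GrO0=KO0}.

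For the operadic structure, fix $f\in \E(n)(R)$. By definition, $f$ is a coherent family of isometric embeddings $f_V: V^{n}\to H^{\infty}_R$, one for each finitely generated non-degenerate $V\subset H^{\infty}_R$. Given representatives $E_i\in \GrO_{|V_i|}(V_i\perp H^{\infty})$ for $i=1,\dots,n$, I would form the orthogonal sum $E_1\perp\cdots\perp E_n$, which, after shuffling factors, sits inside $U\perp (H^{\infty})^{n}$ with $U=V_1\perp\cdots\perp V_n$. Applying $f_U\perp f$ (where $f_U$ denotes the restriction of the universal embedding $(H^{\infty})^n\to H^{\infty}$ to $U\subset (H^{\infty})^n$) produces a non-degenerate subspace of $f_U(U)\perp H^{\infty}$ of rank $|U|$, and hence an element of $\GrO_{\bullet}$. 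Associativity of $\perp$, naturality of the shuffle isomorphisms, and the operad structure maps of $\E$ (which compose isometric embeddings) together ensure that this assembles into an $\E$-action; freeness of the $\Sigma_n$-action on $\E(n)$ and on permutations of factors of $\perp$ makes the action $\Sigma_n$-equivariant. The same rule on objects, together with $g_1\perp\cdots\perp g_n$ on morphisms, defines an $\E$-action on $\IPS_{\bullet}$; equivariance of the zero-simplex inclusion is then immediate. Combined with Lemma \ref{lem:EisEinfty}, this gives the $E_{\infty}$-structures and equivariance of the map.

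For group completeness, I would appeal to the homotopy fibration of Lemma \ref{lem:GrOfibr}, whose base $\widetilde{GW}_0(\Delta R)$ is a constant simplicial set on an abelian group, and whose fiber $\GrO_{[0]}(\Delta R)$ is weakly equivalent to the connected space $BO(\Delta R)$ by Proposition \ref{prop:GrO0=KO0}. The long exact sequence of the fibration then yields $\pi_0\GrO_{\bullet}(\Delta R)\cong \widetilde{GW}_0(\Delta R)$, a group; the identical argument applies to $\IPS_{\bullet}(\Delta R)$. Since the $\E$-action on $\pi_0$ is induced by orthogonal sum and therefore agrees (up to the identification above) with the group operation on $\widetilde{GW}_0$, both $E_{\infty}$-spaces are group complete.

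The main technical obstacle is verifying that the operadic formula above is strictly well defined on the filtered colimit $\GrO_{\bullet}$, since each representative depends on the choice of ambient $V_i$ and a change of representative along a transition $E_i\mapsto (V_i'-V_i)\perp E_i$ needs to be compatible with the coherence built into $f$ as an element of the inverse limit $\lim_V \St(V^n,H^{\infty})$. The cleanest way to handle this is to first define the action at the homotopy-colimit level on $\catGrO_{\bullet}$ and $\catS_{\bullet}$, where the bookkeeping is organized by the index category $\calH$, and then transport along the weak equivalences \eqref{eqn:CatGroToCatS}; the coherence built into the limit defining $\E(n)$ is exactly what makes the homotopy-colimit action strictly associative up to the required operadic homotopies.
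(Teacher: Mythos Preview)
Your construction of the $\E$-action is in the right spirit but misses the simplification that makes well-definedness transparent. The paper first rewrites the colimit as
\[
\IPS_{\bullet} = \colim_{V\subset H^{\infty}} \IPS_{|V|}(V^{-}\perp V^{+}),
\]
where $V^{-}$ and $V^{+}$ are two copies of $V$ and the transition map for $V\subset W$ is $E\mapsto (W-V)^{-}\perp E$. With this presentation, for $g\in \St(V_1\perp\cdots\perp V_k,W)$ the action sends $(E_1,\ldots,E_k)$ (with $E_i\subset V_i^{-}\perp V_i^{+}$) to
\[
\bigl(W-g(V_1\perp\cdots\perp V_k)\bigr)^{-}\ \perp\ g(E_1\perp\cdots\perp E_k)\ \subset\ W^{-}\perp W^{+},
\]
and compatibility with the transition maps is immediate by inspection; no passage to $\catGrO_{\bullet}$ or $\catS_{\bullet}$ is needed. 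Your $f_U\perp f$ recipe can be made to work as well, but you leave the colimit-compatibility check undone and then propose to repair this by moving to the homotopy-colimit models, which is exactly the detour the paper's $V^{-}\perp V^{+}$ trick avoids.

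On group completeness, the paper's proof actually says nothing: it only constructs the $\E$-action and invokes Lemma~\ref{lem:EisEinfty}. Your argument via the fibration of Lemma~\ref{lem:GrOfibr} together with Proposition~\ref{prop:GrO0=KO0} does supply the missing identification $\pi_0\cong \widetilde{GW}_0$, but be aware that Lemma~\ref{lem:GrOfibr} assumes $R$ connected and regular, while the proposition is stated for arbitrary commutative $R$ with $\frac{1}{2}\in R$. In the paper's applications (Corollary~\ref{cor:GrOisS} and Proposition~\ref{prop:SGWHweak}) $R$ is always regular, so this restriction is harmless in context, but it does not match the generality claimed in the statement.
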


\begin{proof}
We make $\IPS_{\bullet}(R)$ into a module over the operad $\E$.
The inclusion of zero-simplices $\GrO_{\bullet}(R) \to \IPS_{\bullet}(R)$ will respect this action.
So, the proposition will follow from Lemma \ref{lem:EisEinfty}.

To define the action of the operad $\E$, write $\IPS_{\bullet}$ as 
$$\IPS_{\bullet} = \colim_{V \subset H^{\infty}}\IPS_{|V|}(V^-\perp V^+)$$
where $V^-$ and $V^+$ are two copies of $V$ and for $V \subset W$ the transition map is defined by 
$$\IPS_{|V|}(V^-\perp V^+) \to \IPS_{|W|}(W^-\perp W^+): E \mapsto (W-V)^- \perp E,\ g\mapsto 1_{(W-V)^-}\perp g.$$
Now, the action of $\E$ on $\IPS_{\bullet}$ is defined by
$$\St(V_1\perp ... \perp V_k,W) \times \IPS_{|V_1|}(V_1^-\perp V_1^+) \times \cdots \times \IPS_{|V_k|}(V_k^-\perp V_k^+) \longrightarrow \IPS_{|W|}(W^-\perp W^+)$$
where for $g\in \St(V_1\perp ... \perp V_k,W)$, the functor
$$\IPS_{|V_1|}(V_1^-\perp V_1^+) \times \cdots \times \IPS_{|V_k|}(V_k^-\perp V_k^+) \longrightarrow \IPS_{|W|}(W^-\perp W^+)$$
sends the object
$(E_1,...,E_k)$ to 
$$(W-g(V_1 \perp ... \perp V_k))^- \perp g(E_1 \perp ... \perp E_k)$$
and the map $(e_1,...,e_k):(E_1,...,E_k) \to (E'_1,...,E'_k)$ to 
$$1_{(W-g(V_1 \perp ... \perp V_k))^-}\perp g_{|E'_1}\circ  e_1 \circ g^{-1}_{|E_1}\perp \cdots \perp
g_{|E'_k}\circ e_k\circ  g^{-1}_{|E_k}.$$
\end{proof}

\begin{corollary}
\label{cor:GrOisS}
Let $R$ be a connected regular ring with $\frac{1}{2}\in R$.
Then the map 
$$\GrO_{\bullet}(\Delta R) \to \IPS_{\bullet}(\Delta R)$$
is a weak equivalence of simplicial sets.
\end{corollary}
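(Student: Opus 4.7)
The plan is to promote the weak equivalence on basepoint fibers supplied by Proposition \ref{prop:GrO0=KO0} to a weak equivalence on total spaces, using the $E_\infty$-structure from Proposition \ref{prop:GrOEinf} to overcome the obstacle flagged at the start of Section \ref{sec:EinftyEndofPf}.

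First I would exploit the fact, from Lemma \ref{lem:GrOfibr}, that $\widetilde{GW}_0(\Delta R)$ is a constant and hence discrete simplicial set. Consequently, in diagram (\ref{eqn:GrOKOdiagram}) the fibrations $\GrO_\bullet(\Delta R) \to \widetilde{GW}_0(\Delta R)$ and $\IPS_\bullet(\Delta R) \to \widetilde{GW}_0(\Delta R)$ exhibit their total spaces as disjoint unions of their set-theoretic preimages of points of $\widetilde{GW}_0(R)$. It therefore suffices to show that for every $[V] \in \widetilde{GW}_0(R)$, the restriction of $f : \GrO_\bullet(\Delta R) \to \IPS_\bullet(\Delta R)$ to the preimage of $[V]$ is a weak equivalence of simplicial sets, the case $[V]=0$ being covered by Proposition \ref{prop:GrO0=KO0}.

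The heart of the argument is a translation trick using the $H$-space structure provided by Proposition \ref{prop:GrOEinf}. Since $\GrO_\bullet(\Delta R)$ and $\IPS_\bullet(\Delta R)$ are group-complete $E_\infty$-spaces, they are in particular group-like $H$-spaces, so left multiplication by any element of the underlying simplicial set is a self-homotopy-equivalence. Pick any $x \in \GrO_\bullet(\Delta R)$ lying over $[V]$. Then $\ell_x$ sends the basepoint component into the preimage of $[V]$ and restricts to a homotopy equivalence from $\GrO_{[0]}(\Delta R)$ to this preimage; the analogous statement holds for $\ell_{f(x)}$ on the $\IPS$-side. Because $f$ is an $E_\infty$-map, hence an $H$-map, the relation $f \circ \ell_x \simeq \ell_{f(x)} \circ f$ assembles into a homotopy-commutative square whose horizontal arrows are homotopy equivalences and whose left vertical arrow is a weak equivalence by Proposition \ref{prop:GrO0=KO0}. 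It follows that the right vertical arrow is a weak equivalence as well, completing the fibrewise reduction and hence the proof.

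The main technical obstacle I anticipate is making this $H$-space translation step precise in our setting, namely verifying that left translation by an arbitrary element of a group-complete $E_\infty$-space as constructed here is indeed a self-equivalence, and that the map $f$ intertwines such translations up to homotopy. These are standard consequences of group-completion, but some care is needed since our $E_\infty$-structure is defined via the operad $\E$ from Lemma \ref{lem:EisEinfty} rather than by a strict simplicial monoid structure.
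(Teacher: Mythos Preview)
Your proposal is correct and follows exactly the strategy the paper uses: combine the map of homotopy fibrations (\ref{eqn:GrOKOdiagram}) over the discrete base $\widetilde{GW}_0(\Delta R)$, the fibre equivalence from Proposition \ref{prop:GrO0=KO0}, and the group-complete $E_\infty$-structure from Proposition \ref{prop:GrOEinf}. The paper's proof is a one-line citation of these three ingredients; your proposal simply unpacks the standard translation argument that lies behind that citation.
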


\begin{proof}
This follows from the map of homotopy fibrations (\ref{eqn:GrOKOdiagram})
in view of Propositions \ref{prop:GrO0=KO0} and \ref{prop:GrOEinf}.
\end{proof}

\begin{proposition}
\label{prop:SGWHweak}
Let $R$ be a connected regular noetherian ring with $\frac{1}{2}\in R$.
Then the map (\ref{eqn:CatSToGW}) induces a weak equivalence of simplicial sets 
$$\catS_{\bullet}(\Delta R) \stackrel{\sim}{\rightarrow} \widetilde{\GW}(\Delta R).$$
\end{proposition}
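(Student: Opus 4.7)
My plan follows the pattern of Corollary \ref{cor:GrOisS}: reduce the statement to a comparison on identity components via homotopy fibrations over $\widetilde{GW}_0(R)$, and then invoke a group-completion argument.

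First, I would establish a map of homotopy fibrations of simplicial sets
$$\IPS_{[0]}(\Delta R) \longrightarrow \catS_\bullet(\Delta R) \longrightarrow \widetilde{GW}_0(R),$$
$$\widetilde{\GW}_{[0]}(\Delta R) \longrightarrow \widetilde{\GW}(\Delta R) \longrightarrow \widetilde{GW}_0(R),$$
which is the identity on the (constant) base and is induced on total spaces by (\ref{eqn:CatSToGW}) together with the weak equivalences (\ref{eqn:CatGroToCatS}). The top row is Lemma \ref{lem:GrOfibr}. For the bottom row, I would argue that $\widetilde{\GW}(\Delta R)$ is a group-like $E_\infty$-space: the orthogonal-sum $E_\infty$-action of Proposition \ref{prop:GrOEinf} transports along (\ref{eqn:CatSToGW}) to a compatible $\E$-action on $\widetilde{\GW}$, and $\pi_0 \widetilde{\GW}(\Delta R) = \widetilde{GW}_0(R)$ is a group by the homotopy invariance of $GW_0$ for regular rings. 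Consequently the projection of $\widetilde{\GW}(\Delta R)$ to its discrete $\pi_0$ is a quasi-fibration with fiber over $0$ the identity component $\widetilde{\GW}_{[0]}(\Delta R)$.

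Having reduced to showing that the map on fibers $\IPS_{[0]}(\Delta R) \to \widetilde{\GW}_{[0]}(\Delta R)$ is a weak equivalence, I would identify both simplicial sets with $BO(\Delta R)$. The source is Proposition \ref{prop:GrO0=KO0}. For the target I would invoke the Grayson--Quillen $\scS^{-1}\scS$-construction built into the definition of $\widetilde{\GW}$ (cf.\ Remark \ref{rem:SSasHocolim}): the space $\widetilde{\GW}(\Delta R)$ models the group completion of the symmetric monoidal groupoid of inner product spaces over $\Delta R$, and the loop space of its identity component is thus the stable orthogonal group $\colim_n O(H^n)(\Delta R) = O(\Delta R)$, so its identity component is canonically $BO(\Delta R)$. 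The induced map on fibers is an $E_\infty$-map between the identity components of two group-like $E_\infty$-spaces with matching $\pi_0$ and matching loop-spaces, and one traces through the identifications to see that under these equivalences it corresponds to the identity of $BO(\Delta R)$.

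The cleanest packaging of the argument is via the McDuff--Segal group-completion theorem: the simplicial $E_\infty$-space $\catS_\bullet(\Delta R)$ of Proposition \ref{prop:GrOEinf} is already group-complete (its $\pi_0$ equals the group $\widetilde{GW}_0(R)$), and $\widetilde{\GW}(\Delta R)$ is by construction its group completion, so the canonical $E_\infty$-map from a group-complete source to its group completion is a weak equivalence. I expect the main obstacle to be the rigorous identification of $\widetilde{\GW}(\Delta R)$ as the group completion of $\catS_\bullet(\Delta R)$ in a manner compatible with the $\E$-action of Lemma \ref{lem:EisEinfty}; once that compatibility is in place, the group-completion theorem finishes the argument with no further work.
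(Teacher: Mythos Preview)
Your second approach is close to the paper's, but you have mislocated the key input. The paper's proof is short: by Grayson's group-completion theorem, the map $\catS_\bullet(R) \to \widetilde{\GW}(R)$ is already an integral homology isomorphism for every $R$ (this is the ``telescope computes the homology of $S^{-1}S$'' statement); hence so is the map at $\Delta R$; both sides at $\Delta R$ are group-like $H$-spaces (the source by Proposition~\ref{prop:GrOEinf}, the target because $\widetilde{\GW}(\Delta R)\simeq\widetilde{\GW}(R)$ by homotopy invariance of $GW$ and $\widetilde{\GW}(R)$ is group-complete by construction); and a homology isomorphism between group-like $H$-spaces is a weak equivalence. So the ``obstacle'' you flag dissolves once you invoke Grayson's homology statement directly: there is no need to exhibit $\widetilde{\GW}(\Delta R)$ as the group completion of $\catS_\bullet(\Delta R)$ compatibly with the $\E$-action, only to know that both are group-like and the map is a homology isomorphism.

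Your first approach, by contrast, has a genuine gap. The identification of the fiber $\widetilde{\GW}_{[0]}(\Delta R)$ with $BO(\Delta R)$ is not justified: knowing (or asserting) that its loop space is $O(\Delta R)$ does not determine the space, and the standard group-completion heuristic yields the identity component as $BO(\Delta R)^+$, not $BO(\Delta R)$. Showing that $\IPS_{[0]}(\Delta R)\to\widetilde{\GW}_{[0]}(\Delta R)$ is a weak equivalence is essentially the proposition itself restricted to one component, and any attempt to identify the target fiber via $\Omega_{S^1}GW \simeq O$ would invoke Theorem~\ref{ZBetGsiKG}, which is proved \emph{after} and \emph{using} the present proposition. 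The fibration decomposition over $\widetilde{GW}_0(R)$ is unnecessary here; the paper's homology argument handles all components at once.
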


\begin{proof}
By the Group Completion Theorem \cite[Theorem, p. 221]{quillenGrayson}, the map
$$\catS_{\bullet}(R) \rightarrow \widetilde{\GW}(R)$$
induces an isomorphism on integral homology groups.
It follows that the map in the proposition is an isomorphism on integral homology groups as well.
It is well-known that $GW(\Delta R) \simeq GW(R)$ and hence $\widetilde{\GW}(\Delta R)\simeq \widetilde{\GW}(R)$ are group complete $H$-spaces.
By Proposition \ref{prop:GrOEinf}, the same is true for $\catS_{\bullet}(\Delta R)$.
Therefore, the map in the proposition is indeed a weak equivalence of simplicial sets.
\end{proof}

\section{Geometric models for $GW^n$}

In this section we will prove Theorem \ref{thm:8spaces}.

\begin{proposition}
\label{prop:BetGrOdot}
Let $R$ be a regular noetherian ring with $\frac{1}{2}\in R$.
Then the map (\ref{eqn:GrOIPSBetOstable}) induces a weak equivalence of simplicial sets
$$\GrO_{\bullet}(\Delta R) \stackrel{\sim}{\longrightarrow} (B_{et}O)(\Delta R).$$
In particular,
for any regular noetherian scheme $S$ with $\frac{1}{2}\in \Gamma(S,O_S)$,
the canonical map $\GrO_{\bullet} \to B_{et}O$ is isomorphism in $\H_{\bullet}(S)$.
\end{proposition}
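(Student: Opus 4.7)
The plan is to prove the weak equivalence $\GrO_{\bullet}(\Delta R) \stackrel{\sim}{\to} B_{et}O(\Delta R)$ by factoring via the intermediate presheaf $\IPS_{\bullet}$ as in (\ref{eqn:GrOIPSBetOstable}), namely $\GrO_{\bullet} \to \IPS_{\bullet} \to B_{et}O$, and checking each arrow separately.

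First I would handle $\GrO_{\bullet}(\Delta R) \to \IPS_{\bullet}(\Delta R)$: for connected $R$ this is precisely Corollary \ref{cor:GrOisS}, and a general regular noetherian $R$ reduces to the connected case by decomposing $\Spec R$ into its finitely many connected components and using that both presheaves convert finite disjoint unions of schemes into products. For the second arrow $\IPS_{\bullet}(\Delta R) \to B_{et}O(\Delta R)$, I would apply Lemma \ref{lem:IPSisBetO} with $A = \Delta^n R$ for each $n\geq 0$: the resulting levelwise weak equivalences assemble into a map of bisimplicial sets which is a weak equivalence on diagonals, giving a weak equivalence $B\IPS_{|V|}(V\perp H^{\infty})(\Delta R) \to B_{et}O(V)(\Delta R)$ for each $V\in \calH$. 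Passing to the filtered colimit over $V$ (cofinally over $V=H^n$), and using that filtered colimits of simplicial sets preserve weak equivalences, yields the desired weak equivalence of infinite stages.

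The second claim---that $\GrO_{\bullet} \to B_{et}O$ is an isomorphism in $\H_{\bullet}(S)$---follows from the first by standard $\A^1$-homotopy machinery: the value $F(\Delta R)$ computes the sections of $\operatorname{Sing}_{\bullet}^{\A^1} F$ over $\Spec R$, so a sectionwise weak equivalence of $\operatorname{Sing}_{\bullet}^{\A^1}$ on all smooth affine $S$-schemes, combined with etale (hence Nisnevich) fibrancy of $B_{et}O$ and the description of $\GrO_{\bullet}$ as a filtered colimit of smooth schemes, promotes to an $\A^1$-local weak equivalence in $\H_{\bullet}(S)$. The main obstacle I anticipate is precisely this promotion: one must either invoke a Brown--Gersten-type property on both sides or argue directly via the $\A^1$-fibrant replacement that sectionwise weak equivalence on $\Delta R$-sections for smooth affine $R$ suffices to deduce an isomorphism in $\H_{\bullet}(S)$.
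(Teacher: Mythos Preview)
Your proposal is correct and follows essentially the same route as the paper's proof: factor through $\IPS_{\bullet}$, invoke Corollary \ref{cor:GrOisS} for the first arrow (reducing to connected $R$ via products over connected components), and use Lemma \ref{lem:IPSisBetO} for the second arrow. Your worry about the promotion step is unnecessary; the paper treats ``weak equivalence at $\Delta R$ for all smooth affine $R$'' $\Rightarrow$ ``isomorphism in $\H_{\bullet}(S)$'' as a standard fact and does not elaborate further.
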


\begin{proof}
For connected $R$, this follows from Corollary \ref{cor:GrOisS} in view of
Lemma \ref{lem:IPSisBetO}.
Since both sides convert finite disjoint unions into cartesian products, we are done. 
\end{proof}

Write $\underline{\Z}$ for the constant sheaf associated with the constant presheaf $\Z$.
Recall that the presheaf $\pi_0B_{et}O$ is homotopy invariant on regular noetherian rings $R$ with $\frac{1}{2}\in R$ since on affine schemes it is the kernel of the rank map $GW \to \underline{\Z}$.
Similarly, the presheaves $\pi_0B_{et}GL$ and $\pi_0B_{et}Sp$ are also homotopy invariant on affine schemes.

Note that in the next theorem, we have $B_{et}G=B_{Nis}G=B_{Zar}G$ for $G=GL$ and $Sp$ but not for $O$.

\begin{theorem}
\label{ZBetGsiKG}
The canonical maps of presheaves of simplicial sets
$$
\begin{array}{llllll}
\underline{\Z} \times  B_{et}O & \to &  GW, & \hspace{4ex} O & \to & \Omega_{S^1}GW,\\
\underline{\Z} \times  B_{et}GL & \to & K, & \hspace{4ex} GL & \to & \Omega_{S^1}K,\\
\underline{\Z} \times  B_{et}Sp & \to &  GW^2, & \hspace{4ex} Sp & \to & \Omega_{S^1}GW^2
\end{array}
$$
are weak equivalences of simplicial sets when evaluated at $\Delta R$ for any  regular noetherian ring $R$ (with $\frac{1}{2}\in R$ in case of $O$ and $Sp$).
In particular, all these maps are $\A^1$-weak equivalences.
\end{theorem}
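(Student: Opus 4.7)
The plan is to establish the non-loop equivalences in the left column first, then deduce the loop versions in the right column by applying $\Omega_{S^1}$. For the $O$ case of the left column, I would chain together the equivalences proved in the earlier sections: Theorem \ref{thm:simplHtpyEq}, combined with the lemmas comparing $\GW$ with $\widetilde{\GW}$ and with $GW$, yields $\underline{\Z} \times \GrO_{\bullet}(\Delta R) \simeq GW(\Delta R)$ for connected regular $R$, and Proposition \ref{prop:BetGrOdot} then replaces $\GrO_{\bullet}$ by $B_{et}O$. To pass from connected to arbitrary regular noetherian $R$, I would decompose $\Spec R$ into its finitely many connected components and use that both $\underline{\Z} \times B_{et}O$ and $GW$ convert finite disjoint unions of schemes into products of simplicial sets. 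The $GL$ case is essentially the Morel-Voevodsky result (\ref{eqn:MVresult}), and can equally well be obtained by rerunning the arguments of Sections \ref{GrO}--\ref{sec:EinftyEndofPf} with ordinary Grassmannians $\Gr_d$ and $K$-theory in place of $\GrO_d$ and $GW$ (the argument simplifies considerably since no bilinear forms enter). The $Sp$ case proceeds by the identical method, with $\GrSp_d$, the symplectic group $Sp$, and the $\eps = -1$ variant $GW^2$ of Grothendieck-Witt theory replacing the orthogonal data.

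For the right column, I would apply $\Omega_{S^1}$ to the corresponding equivalences of the left column. Both sides are group-complete $E_{\infty}$-spaces at $\Delta R$ (Proposition \ref{prop:GrOEinf} in the $O$ case, with parallel constructions for $GL$ and $Sp$), so $\Omega_{S^1}$ carries the non-loop weak equivalence to a weak equivalence on loops. Since the discrete factor $\underline{\Z}$ disappears under $\Omega_{S^1}$, the remaining task is to identify $\Omega_{S^1}B_{et}G(\Delta R)$ with $G(\Delta R)$ via the canonical map, for each $G \in \{O, GL, Sp\}$. For $G = GL$ and $G = Sp$ this is straightforward: $BG \to B_{et}G$ is a Zariski-local and hence sectionwise weak equivalence (since these groups are special), and the standard identification $G \simeq \Omega BG$ for presheaves of simplicial groups then finishes the job.

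The main obstacle is the $O$ case of this last identification, since $BO \to B_{et}O$ is not a sectionwise equivalence at $\Delta R$: nontrivial etale $O$-torsors exist, corresponding to nontrivial inner product spaces of a given rank over $\Delta R$. My plan here is to exploit the already-established equivalence $\Omega_{S^1}B_{et}O(\Delta R) \simeq \Omega_{S^1}GW(\Delta R)$ of group-complete $H$-spaces, view both $O(\Delta R)$ and $\Omega_{S^1}GW(\Delta R)$ as underlying spaces of the $(-1)$-st deloop of the connective $GW$-spectrum evaluated at $\Delta R$, and use the homotopy invariance of $O$ and of $GW$ on regular rings with $\tfrac{1}{2}$ invertible to identify the canonical map between them as a weak equivalence.
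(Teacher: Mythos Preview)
Your treatment of the left column is essentially the paper's argument: chain Theorem \ref{thm:simplHtpyEq} with Lemma \ref{lem:IPSisBetO} and the comparison of $\widetilde{\GW}$ with $GW$ to handle connected $R$, then use that source and target take finite disjoint unions to products. The reduction of the $GL$ and $Sp$ cases to a parallel run of the same machinery is also what the paper does (for $GL$ it spells out the dictionary of ``direct submodules'' replacing non-degenerate subspaces).

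The gap is in your right-column argument. For $G=GL$ and $G=Sp$ you claim that $BG \to B_{et}G$ is a sectionwise weak equivalence because these groups are special. That is false: specialness only says \'etale $G$-torsors are Zariski-locally trivial, not globally trivial, so $\pi_0 B_{et}GL(R)\cong \tilde{K}_0(R)$ and $\pi_0 B_{et}Sp(R)$ can be nonzero for regular $R$, while $\pi_0 BG(R)=*$. Thus $BG(R)\to B_{et}G(R)$ is generally not a weak equivalence, and your deduction of $G(\Delta R)\simeq \Omega B_{et}G(\Delta R)$ breaks down. Your plan for $G=O$ is also not a proof as stated: invoking ``the $(-1)$-st deloop of the connective $GW$-spectrum'' does not by itself identify the canonical map $O(\Delta R)\to \Omega_{S^1}GW(\Delta R)$ with a weak equivalence.

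The paper's argument handles all three groups uniformly and avoids this issue. One uses the sectionwise homotopy fibration
\[
BG \longrightarrow B_{et}G \longrightarrow \pi_0 B_{et}G,
\]
and observes that the base presheaf $\pi_0 B_{et}G$ is homotopy invariant on affines (it is $\tilde{GW}_0$, $\tilde{K}_0$, or $\tilde{GW}^2_0$ respectively). Hence, evaluated at $\Delta R$, the base is a constant simplicial set, and by Proposition \ref{prop:SimplFib} the sequence remains a homotopy fibration. Looping then gives $\Omega(BG)(\Delta R)\simeq \Omega(B_{et}G)(\Delta R)$, and since $(BG)(\Delta R)=B(G(\Delta R))$ for a presheaf of groups, the left side is $G(\Delta R)$. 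Combined with the already established $\underline{\Z}\times B_{et}G(\Delta R)\simeq GW^n(\Delta R)$, this yields the right column. This is the step you are missing; once you insert it, your $GL$/$Sp$ cases are fixed and your separate plan for $O$ becomes unnecessary.
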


\begin{proof}
The first statement for the orthogonal group was proved for connected rings in Theorem \ref{thm:simplHtpyEq}.
Source and target of the map convert finite disjoint unions into cartesian products.
So, the case of non-connected rings follows.
For the second statement, consider the sequence
$$BO \to B_{et}O \to \pi_0B_{et}O$$
which is section-wise a homotopy fibration.
Since the base of the fibration is homotopy invariant on affine schemes,
the sequence of simplicial sets
$$(BO)(\Delta R) \to (B_{et})(\Delta R) \to (\pi_0B_{et}O))(\Delta R)$$
is a homotopy fibration with discrete base; see Proposition \ref{prop:SimplFib}.
It follows that the spaces $(BO)(\Delta R)= B(O(\Delta R)) $,  $(B_{et}O)(\Delta R)$ and $(\underline{\Z} \times B_{et}O)(\Delta R) \simeq GW(\Delta R)$ all have equivalent $S^1$-loop spaces.
But $\Omega_{S^1} B(O(\Delta R)) \simeq O(\Delta R)$ as is the case for any simplicial group in place of $O(\Delta R)$.

The case of the symplectic groups is {\em mutatis mutandis} the same as the orthogonal case replacing symmetric forms with alternating forms through-out.

The case of the general linear group is also {\em mutatis mutandis} the same provided one uses the correct dictionary.
``Inner product spaces'' should be replaced by ``finitely generated projective modules''.
``Maps respecting forms'' $(V,\ffi) \to (V',\ffi')$ are replaced by {\em direct maps} $(i,q):P \to P'$, that is, pairs of maps $i:  P \to P'$, $q:P' \to P$ such that $qi = 1_P$.
Composition of direct maps are composition of the $i$'s and $q$'s.
A {\em direct submodule} of a projective module $Q$ therefore is a submodule $i:P \subset Q$ together with a retract $q:Q \to P$ such that $qi=1$.
The {\em direct complement} of a direct submodule $(i,q):P \subset Q$ is the direct submodule $Q-P = \im(1_Q-iq)\subset Q$ equipped with the retraction $q-iq:Q \to (Q-P)$. 
Note that $P\oplus (Q-P) = Q$ (as submodules of $Q$).
The index category $\calH = \{ V \subset H^{\infty}\}$ in the definition of $\GrO_{\bullet}$ and $\IPS_{\bullet}$ gets replaced by the category $\calH'$ of finitely generated direct submodules of $R^{\infty}=\bigoplus_{\N}R$.
Direct inclusions, that is inclusions together with retracts, make $\calH'$ into a filtered category.
With these definitions, the details of the proof for $GL$ are left as an exercise.
\end{proof}

The following lemma applies to groups such as $GL$, $O$, $Sp$ and the various forgetful and hyperbolic maps between them.
Note that $(B_{et}G)(\Delta R)$ is an $E_{\infty}$-space for $G=GL$, $O$, $Sp$, by Theorem \ref{ZBetGsiKG}, or Propositions \ref{prop:GrOEinf} and \ref{prop:BetGrOdot} and their analogs for $Sp$ and $GL$.

\begin{lemma}
\label{lem:GmodHBet}
Let $G$ be a presheaf of groups on $\Sm_S$, and let $H\leq G$ be a presheaf of subgroups.
Assume that for $\Spec R \in \Sm_S$ the map 
$(B_{et}H)(\Delta R) \to  (B_{et}G)(\Delta R)$ is a map of group complete $E_{\infty}$ spaces. 
Assume further that the presheaves $\pi_0B_{et}G$ and $\pi_0B_{et}H$ are homotopy invariant on affines.
Then the canonical sequence 
$$(G/H)_{et} \to  B_{et}H \to  B_{et}G$$
is a homotopy fibration of simplicial sets when evaluated
at $\Delta R$ for any affine $\Spec R \in \Sm_S$.
\end{lemma}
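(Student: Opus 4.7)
The plan is to verify the claim in two stages: first, a section-wise homotopy fibration on each smooth affine, and second, a transfer to $\Delta R$-sections via a Bousfield--Friedlander-style theorem, whose hypothesis is precisely the $\pi_*$-Kan condition that the two assumptions of the lemma are calibrated to supply.

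For the first stage, I would model $B_{et}G$ as the etale-sheafified Borel construction $(E_{et}G)/G$, where $E_{et}G$ is a contractible simplicial presheaf with free $G$-action in the etale topology; similarly $B_{et}H=(E_{et}G)/H$. The quotient map $(E_{et}G)/H \to (E_{et}G)/G$ is then section-wise a simplicial fibration on each smooth affine $X$, with fiber over the basepoint (the trivial $G$-torsor) the etale sheaf quotient $(G/H)_{et}(X)$. Under the usual identification of $B_{et}G(X)$ with the nerve of the groupoid of etale $G$-torsors on $X$, this recovers the classical non-abelian etale cohomology long exact sequence attached to $1 \to H \to G \to G/H \to 1$: the fiber over the trivial torsor is the groupoid of pairs $(P,\phi)$ with $P$ an $H$-torsor and $\phi$ a trivialization of $P\times^HG$, and trivializations of $P\times^HG$ are naturally in bijection with sections of $G/H$.

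For the second stage, I would apply the first stage at $X=\Delta^nR$ for each $n$ to obtain a level-wise homotopy fibration of bisimplicial sets
$$(G/H)_{et}(\Delta^\bullet R) \to B_{et}H(\Delta^\bullet R) \to B_{et}G(\Delta^\bullet R),$$
and invoke the Bousfield--Friedlander theorem to descend to a homotopy fibration on diagonals. The required $\pi_*$-Kan condition on the base breaks into two pieces. The simplicial set $\pi_0B_{et}G(\Delta^\bullet R)$ is constant by the homotopy invariance assumption, hence trivially Kan; and for $n\geq1$, the group-complete $E_\infty$-structure on $B_{et}G(\Delta R)$, extracted levelwise, turns each simplicial set $\pi_nB_{et}G(\Delta^\bullet R)$ into a simplicial abelian group, which is automatically Kan. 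The analogous analysis applies to $B_{et}H$.

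The main obstacle is the verification of the $\pi_*$-Kan condition in the second stage: without it, a level-wise fibration of bisimplicial sets need not remain a fibration on diagonals, and the identification of the homotopy fiber of $B_{et}H(\Delta R)\to B_{et}G(\Delta R)$ with $(G/H)_{et}(\Delta R)$ can genuinely fail. The two hypotheses of the lemma are exactly calibrated to handle complementary parts of this condition---homotopy invariance of $\pi_0$ gives the Kan property in degree zero, while the $E_\infty$-structure supplies simplicial group structure in higher degrees---so once both are in place Bousfield--Friedlander delivers the desired conclusion.
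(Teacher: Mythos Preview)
Your first stage matches the paper's setup. Your second stage has a real gap. The $E_\infty$ hypothesis of the lemma is stated for the \emph{diagonal} $(B_{et}G)(\Delta R)$, not for the individual levels $B_{et}G(\Delta^n R)$; nothing in the assumptions lets you ``extract it levelwise''. And even granting a levelwise $E_\infty$ structure, the Bousfield--Friedlander $\pi_*$-Kan condition asks that $\pi_mX \to d_0X$ be a Kan fibration, where $d_0X$ is the simplicial set $n\mapsto X_{n,0}$ of vertical $0$-simplices and $(\pi_mX)_n$ collects $\pi_m(X_n,x)$ over \emph{all} $x\in X_{n,0}$. Knowing that $[n]\mapsto\pi_m(X_n,v_n)$ at one degenerate basepoint is a simplicial (abelian) group---which for $m\geq 1$ is automatic anyway, with no $E_\infty$ input---does not address this.

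The paper sidesteps the full $\pi_*$-Kan condition. It assembles a $3\times 3$ diagram with rows $G/H\to\tilde{B}H\to BG$, then $(G/H)_{et}\to\tilde{B}_{et}H\to B_{et}G$, then $X\to\pi_0\tilde{B}_{et}H\to\pi_0B_{et}G$ (with $X$ the kernel), in which every row and column is a section-wise homotopy fibration and the bottom row consists of discrete, homotopy-invariant presheaves. After evaluation at $\Delta R$ the \emph{columns} remain homotopy fibrations by the easy constant-base case of Bousfield--Friedlander (Proposition~\ref{prop:SimplFib}), the top row is a homotopy fibration because it is literally a simplicial-group quotient, and the bottom row is unchanged. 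A five-lemma comparison of the left column with the column of homotopy fibres of the two right horizontal maps then yields the middle row. The group-complete $E_\infty$ hypothesis enters precisely at this last step, to make all $\pi_0$'s abelian groups so that the five lemma applies cleanly at the tail of the long exact sequences---a use that is directly justified by the stated hypothesis on the diagonal, unlike your levelwise extraction.
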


\begin{proof}
Write $\tilde{B}H$ for $(EG)/H$ and recall that the map $BH = (EH)/H \to (EG)/H = \tilde{B}H$ is a weak equivalence on all sections; see Proposition \ref{appx:XmodG=YmodG}.
The sequence of presheaves
$G/H \to \tilde{B}H \to BG$ is a fibration sequence of simplicial sets 
on all sections; see Proposition \ref{appx:GmodHKan}.
Taking fibrant replacements in the etale topology (or any other topology, say, with enough points) preserves section-wise homotopy fibrations.
Therefore, the sequence
$(G/H)_{et} \to \tilde{B}_{et}H \to B_{et}G$
is a homotopy fibration on all sections.
Consider the commutative diagram of presheaves
$$\xymatrix{
G/H \ar[r] \ar[d] & \tilde{B}H \ar[r]\ar[d] & BG \ar[d]\\
(G/H)_{et} \ar[r] \ar[d] & \tilde{B}_{et}H \ar[r]\ar[d] & B_{et}G \ar[d]\\
X \ar[r] & \pi_0\tilde{B}_{et}H \ar[r] & \pi_0B_{et}G
}$$
where $X$ is the homotopy fibre (in this case, the kernel) of $\pi_0\tilde{B}_{et}H \to \pi_0B_{et}G$.
In this diagram, all rows and columns are homotopy fibrations, and the bottom row is homotopy invariant on affines.
Moreover, the lower vertical maps are surjective on $\pi_0$ (the left one because of the long exact sequence of homotopy groups associated with the middle row).
For $\Spec R \in \Sm_S$, we therefore obtain a commutative diagram of simplicial sets
$$\xymatrix{
(G/H)(\Delta R) \ar[r] \ar[d] & (\tilde{B}H)(\Delta R) \ar[r]\ar[d] & (BG)(\Delta R) \ar[d]\\
(G/H)_{et}(\Delta R) \ar[r] \ar[d] & (\tilde{B}_{et}H)(\Delta R) \ar[r]\ar[d] & (B_{et}G)(\Delta R) \ar[d]\\
X(\Delta R) \ar[r] & (\pi_0\tilde{B}_{et}H)(\Delta R) \ar[r] & (\pi_0B_{et}G)(\Delta R)
}$$
in which the columns are homotopy fibrations, by Proposition \ref{prop:SimplFib}, 
the bottom row is a homotopy fibration since it is the same as the bottom row of the previous diagram, and the top row is a homotopy fibration, by Proposition \ref{appx:GmodHKan}, since $(BN)(\Delta R) = B(N(\Delta R))$ for any presheaf of groups $N$.
Furthermore, the lower vertical maps are surjective on $\pi_0$ since this was also the case in the previous diagram.
The left column homotopy fibration maps to the homotopy fibration obtained by taking the homotopy fibres of the right horizontal maps.
By the five lemma applied to the long exact sequence of homotopy groups (in which all homotopy groups and sets are abelian groups as all spaces involved are group complete $E_{\infty}$-spaces, and the last non-trivial maps are surjective) these two homotopy fibrations are weakly equivalent.
It follows that the middle row is also a homotopy fibration.
Since $BH \to \tilde{B}H$ is a section wise weak equivalence, the same is true for $B_{et}H \to \tilde{B}_{et}H$ and $\Sing B_{et}H \to \Sing\tilde{B}_{et}H$.
This proves the claim.
\end{proof}

\begin{theorem}
\label{thm:Modles}
There are canonical maps of simplicial presheaves
$$\begin{array}{llllll}
(Sp/GL)_{et} & \to & GW^1, & \hspace{4ex} (GL/O)_{et} & \to & \Omega_{S^1}GW^1,\\
(O/GL)_{et} & \to & GW^3, & \hspace{4ex} (GL/Sp)_{et} & \to & \Omega_{S^1}GW^3
\end{array}
$$
which are weak equivalences of simplicial sets when evaluated at $\Delta R$ for $R$ a regular noetherian ring with $\frac{1}{2}\in R$.
In particular, all these maps are $\A^1$-weak equivalences.
\end{theorem}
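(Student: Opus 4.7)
The plan is to deduce Theorem~\ref{thm:Modles} by a four-fold application of Lemma~\ref{lem:GmodHBet}, combined with Theorem~\ref{ZBetGsiKG} and the Karoubi fundamental theorem of Schlichting. The relevant subgroup inclusions are $GL \hookrightarrow Sp$ and $GL \hookrightarrow O$ via the hyperbolic maps sending $A$ to $\operatorname{diag}(A,(A^{t})^{-1})$ in a basis adapted to the standard hyperbolic decomposition, and $O \hookrightarrow GL$ and $Sp \hookrightarrow GL$ via the tautological forgetful inclusions of isometries into automorphisms. The hypotheses of Lemma~\ref{lem:GmodHBet}---homotopy invariance of $\pi_0 B_{\et} G$ for $G = O, Sp, GL$ and the $E_\infty$-map condition---follow from Theorem~\ref{ZBetGsiKG} together with the evident analogues of Proposition~\ref{prop:GrOEinf} for the symplectic and general linear cases. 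Thus for each such inclusion $H \leq G$ one obtains a sectionwise homotopy fibration of simplicial sets at $\Delta R$
\[
(G/H)_{\et}(\Delta R) \longrightarrow (B_{\et}H)(\Delta R) \longrightarrow (B_{\et}G)(\Delta R).
\]

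The crucial input for identifying the homotopy fibres on the left with shifted Grothendieck--Witt presheaves is Schlichting's Karoubi fundamental theorem (see \cite{myGWDG}), which asserts that for a regular noetherian ring $R$ with $\tfrac12 \in R$ there are natural homotopy fibration sequences of simplicial sets
\[
GW^{n-1}(R) \stackrel{F}{\longrightarrow} K(R) \stackrel{H}{\longrightarrow} GW^{n}(R)
\]
for every $n \in \mathbb{Z}$ (via the 4-periodicity $GW^n \simeq GW^{n+4}$), with $H$ the hyperbolic map and $F$ the forgetful map. Under the equivalences of Theorem~\ref{ZBetGsiKG}, the maps $B_{\et}GL \to B_{\et}Sp$ and $B_{\et}GL \to B_{\et}O$ correspond at $\Delta R$ to the hyperbolic maps $K \to GW^2$ and $K \to GW^0$, while $B_{\et}O \to B_{\et}GL$ and $B_{\et}Sp \to B_{\et}GL$ correspond to the forgetful maps $GW^0 \to K$ and $GW^2 \to K$. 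Specializing the Karoubi fibration to $n=2$ (resp.\ to $n=0$, using $GW^{-1} \simeq GW^3$) identifies $GW^1$ (resp.\ $GW^3$) with the homotopy fibre of the hyperbolic map, and hence with $(Sp/GL)_{\et}$ (resp.\ $(O/GL)_{\et}$) at $\Delta R$. Rotating the Karoubi fibration one step to the left gives $\Omega_{S^1} GW^n \to GW^{n-1} \to K$, so specializing to $n=1$ and $n=3$ identifies $\Omega_{S^1}GW^1$ and $\Omega_{S^1}GW^3$ with the fibres of the relevant forgetful maps, i.e., with $(GL/O)_{\et}$ and $(GL/Sp)_{\et}$.

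To realize the canonical comparison maps as actual maps of simplicial presheaves (not merely as zigzags of weak equivalences), I would build each one at the presheaf level as the composite $(G/H)_{\et} \hookrightarrow \tilde{B}_{\et} H = (E_{\et}G)/H$ followed by a presheaf-level map into the homotopy fibre of the corresponding Karoubi fibration, constructed using the explicit $\widetilde{\GW}$-model of Section~4 (an $H$-reduction of a $G$-torsor tautologically produces an inner product space together with canonical metabolic data, giving an explicit cycle in the shifted Grothendieck--Witt space). The sectionwise weak equivalence at $\Delta R$ then follows from the five-lemma applied to the long exact sequences of homotopy groups of the two compared fibrations---all groups are abelian, since every space involved is a group-complete $E_\infty$-space. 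The concluding $\A^1$-equivalence is automatic: both sides are homotopy-invariant and Nisnevich-local on $\Sm_S$ (the Grothendieck--Witt side by \cite[Theorems 9.6, 9.8]{myGWDG}, the quotient side by its construction as an etale sheaf), so a sectionwise weak equivalence at $\Delta R$ for all smooth affine $R$ automatically promotes to an $\A^1$-weak equivalence of simplicial presheaves.

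The main obstacle I anticipate is this presheaf-level construction of the canonical comparison maps. While the identification of $(G/H)_{\et}$ with the appropriate Karoubi fibre is transparent in the homotopy category, realizing it as an honest map of simplicial presheaves requires exhibiting the forgetful/hyperbolic functors, the quotient presheaves, and the Karoubi fibrations simultaneously within a single commutative diagram of presheaves, rather than merely a zigzag of weak equivalences passing through various fibrant or stacky replacements.
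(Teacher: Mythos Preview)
Your proposal is correct and follows essentially the same approach as the paper: apply Lemma~\ref{lem:GmodHBet} to the four hyperbolic/forgetful inclusions, invoke Theorem~\ref{ZBetGsiKG} to identify $B_{et}G(\Delta R)$ with the appropriate $K$- or $GW^n$-space, and then compare with the Karoubi homotopy fibrations $GW^n \to K \to GW^{n+1}$ from \cite[Theorem~6.1]{myGWDG}. The paper's proof is simply the terse three-sentence version of what you wrote out in detail; your concern about realizing the comparison maps at the presheaf level is legitimate but is something the paper itself leaves implicit.
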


\begin{proof}
For $n\in \Z$ there are homotopy fibrations 
$GW^n \stackrel{F}{\to} K \stackrel{H}{\to} GW^{n+1}$
where $F$ and $H$ denote forgetful and hyperbolic functor, respectively \cite[Theorem 6.1]{myGWDG}.
Since $GW^n$ and $K$ are homotopy invariant on regular rings \cite[Theorem 9.8]{myGWDG}, we have homotopy fibrations
$$GW^n(\Delta R) \stackrel{F}{\to} K(\Delta R) \stackrel{H}{\to} GW^{n+1}(\Delta R)$$
for any regular noetherian $R$ with $\frac{1}{2}\in R$.
The results now follow from Theorem \ref{ZBetGsiKG} and Lemma \ref{lem:GmodHBet}.
\end{proof}

\begin{remark}
\label{CounterexToMorelVoev}
The proof given in \cite{MorelVoevodsky} that $\Z \times Gr_{\bullet} \cong \Z\times BGL \cong K$ in $\H_{\bullet}(S)$ formally rests on \cite[Proposition 1.9, p.126]{MorelVoevodsky}. This proposition, however, is false as the following example shows.

Let $T$ be the one-point-site, so that $\H_s(T)$ is the homotopy category of simplicial sets.
Let $R$ be a non-zero ring and $M=\bigsqcup_{n\in \N}BGL_n(R)$ be the monoid defined by $BGL_m \times BGL_n \to BGL_{m+n}: (A,B) \mapsto \left(\begin{smallmatrix}A&0\\ 0&B\end{smallmatrix}\right)$.
This monoid is commutative in $\H_s(T)$ 
because it is the classifying space of the symmetric monoidal category of finite rank free $R$-modules with isomorphisms as morphisms.
Alternatively, the monoid multiplication is commutative because
$\left(\begin{smallmatrix}A&0\\ 0&B\end{smallmatrix}\right) = 
\left(\begin{smallmatrix}0&1\\ 1&0\end{smallmatrix}\right) \left(\begin{smallmatrix}B&0\\ 0&A\end{smallmatrix}\right)\left(\begin{smallmatrix}0&1\\ 1&0\end{smallmatrix}\right)^{-1}$ and conjugation $c_g:G \to G: h\mapsto ghg^{-1}$  induces a map $c_g:BG \to BG$ on classifying spaces which is homotopic to the identity map.
If we believe the conclusion of \cite[Proposition 1.9, p.126]{MorelVoevodsky}, then we would have a weak equivalence of simplicial sets $Z \times BGL(R) \sim \Omega B(M)$ which cannot exist since $\pi_1$ of the left hand side is non-abelian whereas $\pi_1$ of the right hand side is abelian.
\end{remark}

\appendix

\section{Simplicial sets}

We collect a few well-known facts about simplicial sets which are used throughout the paper. 
The standard reference nowadays is \cite{GoerssJardine}.

\begin{lemma}
\label{lem:XYZfibrations}
Given a sequence $X \to Y \to Z$ of simplicial sets in which $X \to Y$ is a surjective fibration and the composition $X \to Z$ is a fibration.
Then the map $Y \to Z$ is a fibration.
\end{lemma}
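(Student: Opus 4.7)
The plan is to read "surjective fibration" in the standard sense of a trivial (acyclic) Kan fibration, i.e.\ a map having the right lifting property with respect to every monomorphism of simplicial sets, and then verify the horn-filling condition for $g : Y \to Z$ by a three-step diagram chase: lift the horn data through $f$, fill inside $X$ using $gf$, and push the filler down to $Y$.

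More explicitly, I would start from a horn-filling problem for $g$, that is, maps $\alpha : \Lambda^n_k \to Y$ and $\beta : \Delta^n \to Z$ with $g\alpha = \beta|_{\Lambda^n_k}$, and proceed as follows. Since $f : X \to Y$ is a trivial fibration, it has the right lifting property with respect to the cofibration $\emptyset \hookrightarrow \Lambda^n_k$, so I can lift $\alpha$ to some $\tilde\alpha : \Lambda^n_k \to X$ with $f\tilde\alpha = \alpha$. Then, because $gf\tilde\alpha = g\alpha = \beta|_{\Lambda^n_k}$, the pair $(\tilde\alpha,\beta)$ is a horn-filling problem for the Kan fibration $gf : X \to Z$, so I can extend $\tilde\alpha$ to $\tilde\gamma : \Delta^n \to X$ with $gf\tilde\gamma = \beta$. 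Finally, I set $\gamma := f\tilde\gamma : \Delta^n \to Y$; this fills the original horn, since $\gamma|_{\Lambda^n_k} = f\tilde\alpha = \alpha$ and $g\gamma = gf\tilde\gamma = \beta$.

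The main, and really only, place where more than the Kan fibration property of $f$ is used is the initial lift of $\alpha$ through $f$; the rest is a formal diagram chase. It is genuinely necessary that $f$ be stronger than a mere Kan fibration surjective on simplices: the factorization $ES^1 \to S^1 \to \ast$ exhibits a Kan fibration $f$ surjective on every $X_n$ such that the composite $X \to Z$ is a Kan fibration while $Y \to Z$ is not, so one really does need $f$ to admit sections over cofibrations, which is exactly the trivial-fibration hypothesis built into "surjective fibration".
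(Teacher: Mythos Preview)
Your diagram chase---lift the horn through $f$, fill in $X$ using the fibration $gf$, push down to $Y$---is exactly the paper's argument. The issue is your reading of the hypothesis and the counterexample you use to justify it.

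In this paper ``surjective fibration'' means just what it says: a Kan fibration that is surjective on $0$-simplices (hence on all simplices, since $\{v\}\hookrightarrow\Delta^n$ is an acyclic cofibration). It does \emph{not} mean trivial fibration. Indeed, the lemma is applied later to the sequence $X \to X/H \to X/G$ for a free $G$-action and a simplicial subgroup $H\leq G$; the quotient map $X \to X/H$ is a surjective Kan fibration but essentially never a weak equivalence, so your strengthened hypothesis would render the lemma useless for its intended application.

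The step you flagged---lifting $\alpha:\Lambda^n_k\to Y$ through $f$---goes through under the weaker hypothesis, and this is the one idea your write-up is missing. Pick any vertex $v$ of $\Lambda^n_k$; surjectivity gives a lift $x\in X_0$ of $\alpha(v)$. Since $\{v\}\hookrightarrow\Lambda^n_k$ is an acyclic cofibration (horns are contractible) and $f$ is a Kan fibration, the square
\[
\xymatrix{ \{v\} \ar[r]^{x} \ar[d] & X \ar[d]^f \\ \Lambda^n_k \ar[r]_{\alpha} & Y }
\]
admits a diagonal lift $\tilde\alpha:\Lambda^n_k\to X$. From here your Steps~2 and~3 run verbatim.

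Your proposed counterexample cannot exist: the argument just given, specialized to $Z=\ast$, shows that if $f:X\to Y$ is a surjective Kan fibration and $X$ is a Kan complex, then $Y$ is a Kan complex. Whatever simplicial model of $ES^1\to S^1$ you intend, either the base is already fibrant or the map fails to be a Kan fibration.
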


\begin{proof}
One checks that $Y \to Z$ has the right lifting property with respect to the maps $\Lambda^k_n \subset \Delta_n$.
Any map $\Lambda^k_n \to Y$ lifts to a map $\Lambda_n^k \to X$.
This is because we can lift the image of a zero simplex in $\Lambda^k_n$ (as $X \to Y$ is surjective) and extend this lift to all of $\Lambda^k_n$ since $X \to Y$ is a fibration and the inclusion of a point into $\Lambda^k_n$ is an acyclic cofibration. 
Then the map $\Delta_n \to Z$ lifts to $X$ since $X \to Z$ is a fibration.
Composing this lift with the map $X \to Y$ yields the required map.
\end{proof}

\begin{lemma}
\label{lem:cartIffFibration}
Given a cartesian square of simplicial sets
$$\xymatrix{X \ar[r] \ar[d] & Y \ar[d] \\
Z \ar[r] & W}$$
in which the right (and hence the left) vertical map is a surjective fibration.
Then the upper horizontal map is a weak equivalence if and only if the lower horizontal map is a weak equivalence.
\end{lemma}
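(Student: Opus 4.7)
The plan is to reduce the statement to a comparison of the long exact sequences of homotopy groups of the two vertical maps via the five lemma, with a separate hands-on argument at $\pi_0$.

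First, since $Y \to W$ is a Kan fibration and the square is cartesian, the map $X \to Z$ is also a Kan fibration; surjectivity of $Y \to W$ transfers to $X \to Z$ degreewise, because any simplex $\sigma$ of $Z$ maps to a simplex of $W$ that lifts to $Y$, and such a lift, paired with $\sigma$, gives an element of $X = Z \times_W Y$ over $\sigma$. Moreover, for any $0$-simplex $x \in X$ with images $z \in Z$, $y \in Y$, $w \in W$, the fibre $F_z$ of $X \to Z$ over $z$ is equal, as a simplicial set, to the fibre $F_w$ of $Y \to W$ over $w$, by the universal property of the pullback.

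Next, I would choose a basepoint $x \in X$ (or argue componentwise) and write down the map of long exact sequences of homotopy groups
$$\cdots \to \pi_n(F_z,x) \to \pi_n(X,x) \to \pi_n(Z,z) \to \pi_{n-1}(F_z,x) \to \cdots$$
$$\cdots \to \pi_n(F_w,x) \to \pi_n(Y,y) \to \pi_n(W,w) \to \pi_{n-1}(F_w,x) \to \cdots$$
in which the vertical maps on fibres are identities under the identification $F_z = F_w$. For $n \geq 1$ the five lemma immediately gives that $\pi_n(X,x) \to \pi_n(Y,y)$ is an isomorphism at every basepoint if and only if $\pi_n(Z,z) \to \pi_n(W,w)$ is.

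It remains to check bijectivity at $\pi_0$, and here I would use the surjectivity of the vertical maps together with path lifting. For the direction $\pi_0 Z \cong \pi_0 W \Rightarrow \pi_0 X \cong \pi_0 Y$: given $y \in Y_0$ with image $w$, choose $z \in Z_0$ whose image $w'$ lies in the same component of $W$ as $w$, lift a path from $w$ to $w'$ to a path in $Y$ starting at $y$ (using the fibration $Y \to W$), and end at some $y' \sim y$; then $(z,y') \in X$ maps to $y'$, giving surjectivity of $\pi_0 X \to \pi_0 Y$. For injectivity one chases $x_1,x_2 \in X$ mapping to the same component of $Y$, uses the resulting path in $W$ to reduce to the fibre, and realigns via a loop in $Z$ supplied by the surjectivity of $\pi_1 Z \to \pi_1 W$. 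The reverse implication is symmetric. (Alternatively, the direction where the bottom is assumed a weak equivalence follows at once from right properness of the Kan--Quillen model structure, so only one direction really needs the above argument.)

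The main obstacle is the careful bookkeeping at $\pi_0$: since $\pi_0$ is only a pointed set, one must use the action of $\pi_1$ of the base on $\pi_0$ of the fibre, which is exactly what path lifting in a Kan fibration supplies. Everything else is a standard five-lemma diagram chase.
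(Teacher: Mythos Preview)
Your approach via the long exact sequences of the two vertical fibrations and the five lemma is correct in outline. The fibre identification $F_z = F_w$ is exactly right, and for $n\geq 1$ the five lemma (in its form for groups and pointed sets) does the job at every basepoint. The $\pi_0$ step, which you correctly flag as the main obstacle, can be completed along the lines you indicate, using the compatibility of the $\pi_1(Z)$- and $\pi_1(W)$-actions on $\pi_0(F)$ under the identification of fibres. One quibble: you spell out the $\pi_0$ argument for the direction $\pi_0 Z\cong\pi_0 W \Rightarrow \pi_0 X\cong\pi_0 Y$, which is the direction already covered by right properness, and then dismiss the genuinely non-trivial direction as ``symmetric''. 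The square is not symmetric (the fibrations are vertical, not horizontal), so that direction deserves its own sketch; it does go through, but by a slightly different chase using surjectivity of $Y\to W$ and of $X\to Z$ together with path lifting.

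The paper takes a different and rather cleaner route. One direction is handled, as you note, by right properness. For the converse the paper avoids homotopy groups entirely: it factors $Z\to W$ as an acyclic cofibration followed by a fibration and pulls back along the fibration part, reducing to the situation where $Z\to W$ is itself a fibration and $X\to Y$ is an acyclic fibration. It then verifies directly that $Z\to W$ has the right lifting property against each $\partial\Delta_n\subset\Delta_n$: lift $\Delta_n\to W$ to $Y$ using surjectivity of $Y\to W$, lift $\partial\Delta_n\to Z$ to $X$ using the pullback property, fill in $\Delta_n\to X$ using that $X\to Y$ is an acyclic fibration, and push down to $Z$.

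What each approach buys: the paper's lifting argument is uniform in $n$ and entirely sidesteps the $\pi_0/\pi_1$ bookkeeping and the monodromy action. Your argument is more classical and makes the role of the fibre identification transparent, at the cost of the delicate endgame with pointed sets that you acknowledge.
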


\begin{proof}
By properness of the model category of simplicial sets, if $Z \to W$ is a weak equivalence then so is $X \to Y$.

Assume that $X \to Y$ is a weak equivalence.
Factoring $Z \to W$ into an acyclic cofibration and a fibration and pulling $Y \to W$ along that fibration, we can reduce to showing the claim in case $Z \to W$ is a fibration (and $X\to Y$ an acyclic fibration).
Then we need to show that $Z \to W$ has the right lifting property with respect to all inclusions $\partial \Delta_n \subset \Delta_n$.

Given a map from  $\partial \Delta_n \subset \Delta_n$ to $Z \to W$.
Choose a lift of $\Delta_n \to W$ to $Y$ which exists since $Y \to W$ is surjective. 
The universal property of $Y$ as a pull-back yields a lift of $\partial \Delta_n \to Z$ to $X$ making all diagrams commute.
Since $X \to Y$ is an acyclic fibration, the map $\Delta_n \to Y$ lifts to $X$.
Composing this map with $X \to Z$ yields the required lift.
\end{proof}

\begin{lemma}\cite[Lemma I.3.4]{GoerssJardine}
Let $G$ be a simplicial group.
Then $G$ is fibrant as simplicial set.
\end{lemma}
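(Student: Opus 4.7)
The plan is to invoke the classical Moore construction, which uses the group multiplication of $G$ to explicitly solve the Kan extension problem one face at a time. A horn $\Lambda^k_n \to G$ amounts to $(n-1)$-simplices $x_0,\ldots,\widehat{x_k},\ldots,x_n\in G_{n-1}$ satisfying the compatibility $d_i x_j = d_{j-1} x_i$ for $i<j$ with $i,j\neq k$, and the task is to produce a single $n$-simplex $w\in G_n$ with $d_i w = x_i$ for every $i\neq k$.

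Concretely, I would build a sequence $w_{-1}, w_0, \ldots, w_n \in G_n$ by induction on $r$ with the property that $d_i w_r = x_i$ for all $i\leq r$ with $i\neq k$, and then take $w := w_n$. Start with $w_{-1} := e$, the $n$-fold degeneracy of the identity of $G_0$. Given $w_{r-1}$, put $w_r := w_{r-1}$ if $r=k$; otherwise set
\[ w_r := s_r\bigl(x_r \cdot d_r(w_{r-1})^{-1}\bigr)\cdot w_{r-1}. \]

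The verification at each step uses three ingredients: the simplicial identities (in particular $d_r s_r = \mathrm{id}$ and $d_i s_r = s_{r-1} d_i$ for $i<r$), the fact that face and degeneracy maps of $G$ are group homomorphisms (since $G$ is a simplicial object in groups), and the horn compatibility relations on the $x_i$. A direct check gives $d_r w_r = x_r$ from $d_r s_r = \mathrm{id}$, and for $i<r$ with $i\neq k$ the factor $s_r\bigl(x_r \cdot d_r(w_{r-1})^{-1}\bigr)$ contributes an identity after applying $d_i$, so that $d_i w_r = d_i w_{r-1} = x_i$. The cancellation here is exactly the horn relation $d_i x_r = d_{r-1} x_i$. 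The only genuine obstacle is the bookkeeping: making sure that at step $r$ the multiplicative correction leaves the already-arranged faces $d_i w_{r-1} = x_i$ for $i<r$ untouched, which is precisely what the simplicial identities combined with the horn relations secure.
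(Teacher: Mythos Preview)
Your proposal is correct: it is precisely the classical Moore filling argument, and your inductive formula $w_r = s_r\bigl(x_r\cdot d_r(w_{r-1})^{-1}\bigr)\cdot w_{r-1}$ together with the simplicial identities $d_r s_r = \mathrm{id}$, $d_i s_r = s_{r-1}d_i$, $d_i d_r = d_{r-1}d_i$ (for $i<r$) and the horn relations does exactly what you claim. The paper itself gives no proof at all---the lemma is stated with a bare citation to \cite[Lemma~I.3.4]{GoerssJardine}---so there is nothing to compare against beyond noting that what you have written is essentially the proof found in that reference.
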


\begin{proposition}
\label{prop:XtoX/GKan}
Let $G$ be a simplicial group acting freely from the right on a simplicial set $X$. 
Then the quotient map $X \to X/G$ is a Kan fibration.
\end{proposition}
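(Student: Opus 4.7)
The plan is to verify the right lifting property of the quotient map $q : X \to X/G$ against each horn inclusion $\Lambda^k_n \hookrightarrow \Delta_n$. So, given a commutative square with $\alpha : \Lambda^k_n \to X$ on top and $\beta : \Delta_n \to X/G$ on the bottom, I want to build a diagonal filler $\tilde\sigma : \Delta_n \to X$ extending $\alpha$ and lifting $\beta$.

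First I would choose, using the surjectivity of $X_n \to (X/G)_n = X_n/G_n$, some $n$-simplex $\sigma : \Delta_n \to X$ with $q\sigma = \beta$. For each simplex $\tau$ of $\Lambda^k_n$, the simplices $\alpha(\tau)$ and $\sigma(\tau)$ of $X$ lie in the same $G$-orbit, so by freeness of the action there is a \emph{unique} element $g(\tau) \in G$ of the appropriate degree with $\alpha(\tau) = \sigma(\tau) \cdot g(\tau)$. The key observation is that the uniqueness, together with the compatibility of face and degeneracy operators with the $G$-action, forces the assignment $\tau \mapsto g(\tau)$ to be a bona fide simplicial map $g : \Lambda^k_n \to G$.

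Next I would invoke the classical fact that every simplicial group is a Kan complex (quoted as Lemma I.3.4 of Goerss--Jardine just above in the paper), so since the horn inclusion is anodyne, $g$ extends to a simplicial map $\tilde g : \Delta_n \to G$. Define the candidate filler by $\tilde\sigma(\tau) = \sigma(\tau) \cdot \tilde g(\tau)$. It agrees with $\alpha$ on $\Lambda^k_n$ by the defining property of $g$, and projects to $\beta$ in $X/G$ because modifying $\sigma$ by the $G$-action does not change its image in the quotient.

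The only subtle point, and the one I would be most careful about, is the claim that the pointwise-defined elements $g(\tau) \in G$ do assemble into a simplicial map out of the horn; this follows automatically once one unpacks the simplicial identities using the freeness of the action, but it is the crux of the argument. Everything else reduces to the standard Kan property of simplicial groups and the surjectivity of $q$ on simplices, both of which are immediate.
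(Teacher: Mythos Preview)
Your proof is correct. The paper takes a slightly different route: rather than directly constructing the correction map $g:\Lambda^k_n \to G$, it first pulls back the lifting problem along $\beta:\Delta_n \to X/G$ to reduce to the case $X/G = \Delta_n$, then chooses a section $s:\Delta_n \to X$ (which exists by Yoneda, lifting the identity $n$-simplex), and uses freeness of the action to identify $X \cong \Delta_n \times G$; the required lifting then follows immediately from fibrancy of $G$. Your argument is essentially the hands-on unwinding of the same idea: your map $g$ is exactly the second coordinate of $\alpha$ under that trivialisation. Both approaches rest on the same two ingredients---freeness of the action and the Kan property of simplicial groups---and are of comparable length. The paper's version has the mild conceptual bonus of exhibiting $X \to X/G$ as locally trivial with fibre $G$, while yours avoids the pullback-and-section manoeuvre and verifies the lifting property more directly.
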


\begin{proof}
We need to show that the map $X \to X/G$ has the right lifting property with respect to the standard generating acyclic cofibrations $\Lambda^n_k \subset \Delta^n$.
Take a commutative square for which we have to find a lift.
Pulling back along the map $\Delta^n \to E/G$, we see that it suffices to show the claim of the proposition in case $X/G = \Delta^n$.
In this case, choose a section $s:\Delta^n \to X$ and define the map $\Delta^n\times G \to X: (x,g)\mapsto s(x)g$.
Since $G$ acts freely on $X$, this map is an isomorphism.
Finally, the projection map $\Delta^n\times G \cong X \to \Delta^n$ is a fibration because $G$ is fibrant.
\end{proof}

\begin{proposition}
\label{appx:GmodHKan}
Let $G$ be a simplicial group and let $H\leq G$ be a simplicial subgroup.
Let $X$ be a simplicial set with a free $G$-action from the right.
Then the map $X/H \to X/G$ is a Kan fibration.
In particular, the map $G \to G/H$ is a Kan fibration, and the simplicial set 
$G/H$ is fibrant.
\end{proposition}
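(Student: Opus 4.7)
The plan is to reduce the statement directly to the two preceding results, Proposition \ref{prop:XtoX/GKan} and Lemma \ref{lem:XYZfibrations}, by threading $X$ over $X/H$ over $X/G$.

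First, I would observe that since the simplicial group $G$ acts freely on $X$ from the right, the restriction of this action to the simplicial subgroup $H \leq G$ is also a free right action of $H$ on $X$. Applying Proposition \ref{prop:XtoX/GKan} twice (once to $H\curvearrowright X$ and once to $G\curvearrowright X$), both quotient maps
\[
X \longrightarrow X/H \qquad \text{and}\qquad X \longrightarrow X/G
\]
are Kan fibrations. The first of them is also levelwise surjective, since in each simplicial degree it is the canonical projection of a set onto its orbit set.

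Next, I would feed these inputs into Lemma \ref{lem:XYZfibrations} with the sequence $X \to X/H \to X/G$: the first arrow is a surjective Kan fibration, and the composite equals the quotient map $X \to X/G$, which is a Kan fibration. The lemma then yields that $X/H \to X/G$ is itself a Kan fibration, proving the main claim.

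Finally, for the ``in particular'' assertion, I would specialize to $X = G$ with $G$ acting on itself by right translation, which is a free action. Proposition \ref{prop:XtoX/GKan} applied to the restricted action of $H \leq G$ gives directly that $G \to G/H$ is a Kan fibration. The main claim applied with $X = G$ gives that $G/H \to G/G = \ast$ is a Kan fibration, so $G/H$ is fibrant. There is no real obstacle here; the only point requiring attention is verifying that $X \to X/H$ is surjective so that Lemma \ref{lem:XYZfibrations} applies, which is automatic for an orbit projection.
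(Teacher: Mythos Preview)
Your proof is correct and follows essentially the same approach as the paper: both apply Lemma \ref{lem:XYZfibrations} to the sequence $X \to X/H \to X/G$, using Proposition \ref{prop:XtoX/GKan} to verify the hypotheses, and then specialize to $X=G$ for the ``in particular'' assertions. The only cosmetic difference is that for $G \to G/H$ the paper phrases it as an instance of the main claim with the subgroup pair $\{e\}\leq H$ acting on $X=G$, whereas you invoke Proposition \ref{prop:XtoX/GKan} directly; these amount to the same thing.
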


\begin{proof}
We apply Lemma \ref{lem:XYZfibrations} to the sequence
$X \to X/H \to X/G$ using Proposition \ref{prop:XtoX/GKan}.
So, $X/H \to X/G$ is a Kan fibration.
Applied to $X=G$ and the inclusion of subgroups $\{e\} \subset H$, we obtain
the Kan fibration $G\to G/H$.
Applied to $X=G$ and the inclusion of groups $H \subset G$, we obtain the Kan fibration $G/H \to G/G=*$.
\end{proof}

\begin{proposition}
\label{appx:XmodG=YmodG}
Let $G$ be a simplicial group acting freely on the right on the simplicial sets $X$ and $Y$.
Let $X \to Y$ be a $G$-equivariant map which is a non-equivariant weak equivalence (that is, a weak equivalence forgetting the action). 
Then $X/G \to Y/G$ is a weak equivalence.
\end{proposition}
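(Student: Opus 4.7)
The plan is to exhibit the commutative square
\[
\xymatrix{
X \ar[r]^f \ar[d] & Y \ar[d] \\
X/G \ar[r] & Y/G
}
\]
as cartesian and then invoke Lemma \ref{lem:cartIffFibration}. By Proposition \ref{prop:XtoX/GKan}, the two vertical quotient maps are surjective Kan fibrations, so once the cartesian property has been verified the lemma applies: the top horizontal map $f$ is a weak equivalence iff the bottom map $X/G \to Y/G$ is, and the hypothesis on $f$ closes the argument.

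The one concrete step is therefore to identify $X$ with the pullback $(X/G) \times_{Y/G} Y$ via the canonical comparison map
\[
\psi\colon X \longrightarrow (X/G) \times_{Y/G} Y, \qquad x \mapsto ([x], f(x)).
\]
In each simplicial degree $n$, surjectivity of $\psi$ will be checked by taking $([x], y)$ with $[f(x)] = [y]$, writing $y = f(x)\cdot g$ for the unique $g \in G_n$ produced by freeness of $G$ on $Y$, and observing that $G$-equivariance of $f$ gives $\psi(x\cdot g) = ([x], f(x)\cdot g) = ([x], y)$. For injectivity, two elements with the same image satisfy $x_2 = x_1\cdot g$ (from equality of classes, with $g$ unique because $G$ acts freely on $X$) and $f(x_1) = f(x_2) = f(x_1)\cdot g$, which forces $g = 1$ by freeness on $Y$, hence $x_1 = x_2$.

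The argument is essentially formal; I do not anticipate a serious obstacle, since every ingredient is either a direct application of Proposition \ref{prop:XtoX/GKan} or Lemma \ref{lem:cartIffFibration}, or a set-theoretic check in each degree. The point worth flagging is that freeness of the $G$-action on $Y$, and not merely on $X$, is what makes $\psi$ a bijection onto the pullback, so the hypothesis that $G$ acts freely on both $X$ and $Y$ enters in an essential way.
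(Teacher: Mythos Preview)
Your proof is correct and follows exactly the paper's approach: apply Lemma \ref{lem:cartIffFibration} to the quotient square, using Proposition \ref{prop:XtoX/GKan} for the surjective fibration hypothesis and freeness of the $G$-actions for the cartesian property. The paper merely asserts that the square is cartesian ``because $G$ acts freely on $X$ and $Y$,'' whereas you spell out the bijection $\psi$ in each degree; your added detail is correct and makes explicit precisely where freeness on $Y$ (not just $X$) is used.
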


\begin{proof}
Apply Lemma \ref{lem:cartIffFibration} with $Z \to W$ the map $X/G \to Y/G$ and vertical maps the quotient maps.
The diagram is cartesian because $G$ acts freely on $X$ and $Y$, and the 
right vertical map is a surjective fibration, by Proposition \ref{prop:XtoX/GKan}.
\end{proof}

For a bisimplicial set $X$, denote by $\diag X$ the diagonal simplicial set $(\diag X)_n = X_{n,n}$.
The following proposition follows from the Bousfield-Friedlander theorem \cite[Theorem IV.4.9]{GoerssJardine} or from Mather's Cube Theorem \cite{Mather}.

\begin{proposition}
\label{prop:SimplFib}
Let $X_{\bullet \bullet} \to Y_{\bullet \bullet} \to Z_{\bullet \bullet}$ be a sequence of bisimplicial sets such that for all $p\in \N$, the sequence of simplicial sets
$X_{p \bullet} \to Y_{p \bullet} \to Z_{p \bullet}$ is a homotopy fibration, and $Z_{\bullet\bullet}$ is constant in the $p$-direction, that is, $Z_{p_1,\bullet} \to Z_{p_2,\bullet}$ is the identity for all simplicial operators $[p_2]\to [p_1]$.
Then the sequence of diagonal simplicial sets
$$\diag X \to \diag Y \to \diag Z$$
is a homotopy fibration.
\end{proposition}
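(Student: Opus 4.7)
My plan is to invoke the Bousfield--Friedlander theorem \cite[Theorem IV.4.9]{GoerssJardine} applied to the square of bisimplicial sets
$$
\begin{array}{ccc}
X_{\bullet\bullet} & \longrightarrow & Y_{\bullet\bullet} \\
\downarrow & & \downarrow \\
\ast & \longrightarrow & Z_{\bullet\bullet}
\end{array}
$$
which by hypothesis is levelwise (in the $p$-direction) homotopy cartesian, since $X_{p,\bullet} \to Y_{p,\bullet} \to Z_{p,\bullet}$ is a homotopy fibration for every $p \in \N$. The conclusion of Bousfield--Friedlander, provided $Z_{\bullet\bullet}$ satisfies the $\pi_*$-Kan condition, is that the diagonal square is again homotopy cartesian, which is exactly the claim that $\diag X \to \diag Y \to \diag Z$ is a homotopy fibration.

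The key hypothesis to verify is the $\pi_*$-Kan property of $Z_{\bullet\bullet}$: for every vertex $z$ and every $n \geq 0$ the simplicial set $[p] \mapsto \pi_n(Z_{p,\bullet},z)$ must be a Kan complex. But the constancy of $Z_{\bullet\bullet}$ in the $p$-direction --- each structure map $Z_{p_1,\bullet} \to Z_{p_2,\bullet}$ is the identity --- forces this simplicial set to itself be constant, and constant simplicial sets (and constant simplicial abelian groups) are trivially Kan. One may also observe that under this constancy, $\diag Z = Z_{0,\bullet}$ as simplicial sets.

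To deploy the theorem in its usual form, I would first replace the map $Y_{p,\bullet} \to Z_{p,\bullet}$ levelwise and functorially in $p$ by a Kan fibration between Kan complexes, so that the replacements assemble into bisimplicial sets $\tilde Y$ and $\tilde Z$, and take $\tilde X_{p,\bullet}$ to be the honest strict fibre. Functoriality guarantees that the levelwise weak equivalences $X \to \tilde X$, $Y \to \tilde Y$, $Z \to \tilde Z$ are maps of bisimplicial sets, and the realization lemma transports them to weak equivalences of diagonals, so the conclusion produced by Bousfield--Friedlander for the replaced diagram transfers back to the original sequence.

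The main technical obstacle, as I see it, is the bookkeeping involved in this fibrant-replacement step: one must ensure the replacements really do assemble bisimplicially in $p$ and that the $\pi_*$-Kan property of $Z$ is preserved, the latter being automatic here by constancy. An equally valid alternative is to invoke Mather's Cube Theorem \cite{Mather}, which packages the same transfer of a levelwise homotopy fibration to the diagonal in one shot, relying on an analogous $\pi_*$-Kan-type condition on the base.
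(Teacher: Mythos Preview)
Your proposal is correct and matches the paper's approach exactly: the paper does not give a proof at all, but simply remarks that the proposition follows from the Bousfield--Friedlander theorem \cite[Theorem IV.4.9]{GoerssJardine} or from Mather's Cube Theorem \cite{Mather}. You have in fact supplied more detail than the paper, correctly identifying that the constancy of $Z$ in the $p$-direction is precisely what guarantees the $\pi_*$-Kan condition needed for Bousfield--Friedlander.
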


In order to construct certain maps in the body of our paper we will have to use homotopy colimits.
The reason is that the $K$-theory and hermitian $K$-theory spaces are homotopy colimits themselves; see Remark \ref{rem:SSasHocolim}.
Below we recall the construction within the category of small categories, and in Lemma \ref{lem:FilteringHocolim=colim} we recall a well-known basic fact that we will need.

\begin{definition}[Homotopy colimits]
\label{dfn:hocolim}
Let $\C$ be a small category and 
$\F: \C \to \Cat$ a functor from $\C$ to the category $\Cat$ of small categories. 
The homotopy colimit 
$$\hocolim_{\C}\F$$
is the category whose objects are pairs
$(X,A)$ with $X$ and object of $\C$ and $A$ an object of $\F(X)$.
A map from $(X,A)$ to $(Y,B)$ is a pair $(x,a)$ where $x:X \to Y$ is a map in $\C$ and $a: \F(x)A \to B$ is a map in $\F(Y)$.
Composition $(y,b)\circ (x,a)$ of $(y,b):(Y,B) \to (Z,C)$ and $(x,a):(X,A) \to (Y,B)$ is the map $(y\circ x, b\circ F(y)a)$.
\end{definition}

By a result of Thomason \cite{Thomason:hocolim}, the nerve simplicial set $N_*\hocolim_{\C}\F$ is naturally homotopy equivalent to the Bousfield-Kan homotopy colimit of the diagram $N_*\F:\C \to \Delta^{op}\Sets$ of simplicial sets.
We won't need this fact, but we will need the following special case.
For that, recall that a poset $(\scP,\leq)$ is considered a category with objects the elements of the poset and a unique map from $P\in \scP$ to $Q \in \scP$ if $P\leq Q$.
The poset $(\scP,\leq)$ is filtering if for every $P,Q\in \scP$ there is a $R\in \scP$ with $P,Q\leq R$.

\begin{lemma}
\label{lem:FilteringHocolim=colim}
Let $(\scP,\leq)$ be a filtering poset and let $\F:\scP \to \Cat$ be a functor from $\scP$ into the category $\Cat$ of small categories.
Then the functor of categories
$$\phi: \hocolim_{\scP}\F \to \colim_{\scP}\F: (P,A) \mapsto [P,A]$$
is a homotopy equivalence of simplicial sets.
\end{lemma}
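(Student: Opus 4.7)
The plan is to apply Quillen's Theorem~A to $\phi$: it suffices to prove that for every object $C$ of $\colim_{\scP}\F$, the comma category $\phi/C$ has contractible nerve, which I will do by exhibiting $\phi/C$ as a filtered category (and then invoking the standard fact that filtered categories have contractible nerves).

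First I would spell out the colimit concretely. Since $\scP$ is filtering, an object of $\colim_{\scP}\F$ is represented by a pair $(R,X)$ with $X\in\F(R)$, subject to $(R_1,X_1)\sim(R_2,X_2)$ iff $\F(R_1\leq S)X_1=\F(R_2\leq S)X_2$ for some $S\geq R_1,R_2$; a morphism $[R_1,X_1]\to[R_2,X_2]$ is represented by a pair $(S,f)$ with $S\geq R_1,R_2$ and $f\colon \F(R_1\leq S)X_1\to \F(R_2\leq S)X_2$ in $\F(S)$, two such being identified when they agree after further enlarging $S$. Under these identifications, $\phi$ sends a morphism $(x,a)\colon(P,A)\to(Q,B)$ of $\hocolim_{\scP}\F$ to the class of $(Q,a)$.

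Fix $C=[R,X]$. Non-emptiness of $\phi/C$ is witnessed by $((R,X),\mathrm{id}_C)$. Given two objects $((P_i,A_i),f_i)$, $i=1,2$, pick by the filtering hypothesis $S\geq R,P_1,P_2$ so that each $f_i$ is represented by a map $b_i\colon \F(P_i\leq S)A_i\to Y$ in $\F(S)$, where $Y=\F(R\leq S)X$. Then $((S,Y),\mathrm{id}_C)\in\phi/C$ receives morphisms $(P_i\leq S,b_i)$ from both $((P_i,A_i),f_i)$, giving the required cocone. For two parallel morphisms $(x,a),(x,a')\colon((P_1,A_1),f_1)\to((P_2,A_2),f_2)$ in $\phi/C$ (the underlying poset arrow $x\colon P_1\leq P_2$ coincides for trivial reasons), the equality $f_1=f_2\circ\phi(x,a)=f_2\circ\phi(x,a')$ in $\colim\F$ forces $b_2\circ\F(P_2\leq T)a=b_2\circ\F(P_2\leq T)a'$ in $\F(T)$ for some $T\geq S$, and the morphism $(P_2\leq T,\F(S\leq T)b_2)\colon((P_2,A_2),f_2)\to((T,\F(R\leq T)X),\mathrm{id}_C)$ post-equalizes them in $\phi/C$.

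Thus $\phi/C$ is filtered for every $C$, its nerve is contractible, and Theorem~A delivers the desired equivalence. The main obstacle here is not logical difficulty but bookkeeping: cleanly tracking representatives under the colimit identifications and being willing to enlarge $S$ (then $T$) to normalize them. Once that convention is fixed, each of the two filteredness axioms for $\phi/C$ reduces to a single application of the filtering property of $\scP$.
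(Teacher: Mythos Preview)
Your proof is correct and follows the same overall strategy as the paper: both apply Quillen's Theorem~A and reduce to showing that each comma category $\phi/C$ is contractible. The minor notational slip where you write $b_2\circ\F(P_2\leq T)a$ (it should be $\F(S\leq T)b_2\circ\F(P_2\leq T)a$, as you correctly write one line later) is harmless.

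The execution differs in how contractibility is established. You verify directly that $\phi/C$ is a filtered category, checking non-emptiness, existence of cocones, and coequalization of parallel arrows. The paper instead exhibits an equivalence $\colim_{P\leq Q}(\mathrm{id}\downarrow(Q,A_Q))\cong(\phi\downarrow[P,A])$, observes that each over-category $(\mathrm{id}\downarrow(Q,A_Q))$ is contractible (it has a terminal object), and concludes because a filtered colimit of contractible categories is contractible. Your route is arguably more self-contained: it avoids setting up and checking the auxiliary equivalence of categories, and the filteredness of $\phi/C$ is exactly the heart of the matter. The paper's route, on the other hand, makes the structural reason for contractibility more visible---the comma category is assembled from slices of $\hocolim_{\scP}\F$ each of which is trivially contractible. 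Both arguments ultimately lean on the same filtering of $\scP$ in the same way.
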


\begin{proof}
By Quillen's theorem A \cite{quillen:higherI}, it suffices to show that for every object $[P,A]$ of the category $\colim_{\scP}\F$, the comma category
$(\phi \downarrow [P,A])$ is contractible.
For $A \in \F(P)$ and $P\leq Q$ write $A_Q$ for the object $\F(P\leq Q)A$ in $\F(Q)$ which is the image of $A$ under the functor $\F(P\leq Q): \F(P) \to \F(Q)$.
Contractibility of the comma category now follows from the equivalence of categories 
$$\colim_{P\leq Q \in \scP}(id\downarrow (Q,A_Q)) \cong (\phi \downarrow [P,A])$$
where for $Q \leq R$, the functor $(id\downarrow (Q,A_Q) \to (id\downarrow (R,A_R)$ sends $t:(T,B) \to (Q,A_Q)$ to $c \circ t: (T,B) \to (R,A_R)$ with $c:(Q,A_Q) \to (R,A_R)$ the map given by $id:A_R = \F(Q\leq R)A_Q \to A_R$.
The left-hand category is a filtered colimit over categories with initial objects, hence a filtered colimit over contractible categories.
Therefore, the left-hand category is contractible, and so is the right-hand category.
\end{proof}

\begin{definition}
\label{dfn:naiveA1htpy}
Let $k$ be a commutative ring and $F,G$ be simplicial presheaves on smooth affine $k$-schemes.
An {\em elementary $\A^1$-homotopy} between two maps $h_0,h_1:F \to G$ of presheaves is a map of presheaves $h:\A^1\times F \to G$ such that $h_i = h\circ j_i$, $i=0,1$ where $j_i: \Spec(k) \to \A^1$ corresponds to the evaluation $k[t] \to k: t \mapsto i$. 
Elementary homotopy generates an equivalence relation called {\em naive $\A^1$-homotopy}.
The following is a well-known fundamental fact from $\A^1$-homotopy theory.
\end{definition}

\begin{lemma}
\label{lem:naiveA1htpy}
If $h_0,h_1: F \to G$ are naively $\A^1$-homotopic then for every $k$-algebra $R$, the maps $h_0,h_1: F(\Delta R) \to G(\Delta R)$ are simplicially homotopic.
\end{lemma}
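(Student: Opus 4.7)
The plan is to manufacture an explicit simplicial homotopy from any elementary $\A^1$-homotopy and then close under the equivalence relation. Since naive $\A^1$-homotopy is by definition the equivalence relation generated by elementary homotopies, and since a chain of simplicial homotopies between $h_0,h_1\colon F(\Delta R)\to G(\Delta R)$ is all the statement ultimately asks for, I would reduce immediately to the case that $h_0$ and $h_1$ are related by a single elementary $\A^1$-homotopy $h\colon \A^1\times F\to G$.

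The essential ingredient is a natural simplicial map
$$\lambda\colon\Delta^1\longrightarrow \A^1(\Delta R),$$
where $\A^1(\Delta R)$ denotes the simplicial set $n\mapsto \A^1(\Delta^nR)=\Delta^nR$, sending the two vertices of $\Delta^1$ to $0$ and $1$. I would define $\lambda$ on $n$-simplices by
$$\lambda_n(a_0,\ldots,a_n) \;=\; a_0T_0+a_1T_1+\cdots+a_nT_n \;\in\; \Delta^nR,$$
where $(a_0,\ldots,a_n)\in\{0,1\}^{n+1}$ with $a_0\leq\cdots\leq a_n$ ranges over the $n$-simplices of $\Delta^1$. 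Simpliciality is a one-line verification using the formula $\alpha^*T_i=\sum_{\alpha(j)=i}S_j$ for a simplicial operator $\alpha\colon[m]\to[n]$: both $\alpha^*\lambda_n(a_0,\ldots,a_n)$ and $\lambda_m(a_{\alpha(0)},\ldots,a_{\alpha(m)})$ evaluate to $\sum_j a_{\alpha(j)}S_j$. By construction $\lambda_0(0)=0$ and $\lambda_0(1)=T_0=1$, and the $n$-fold degeneracies of the two vertices evaluate under $\lambda_n$ to $0$ and $T_0+\cdots+T_n=1$ respectively.

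With $\lambda$ in hand, I would take the required simplicial homotopy to be the composite
$$\Delta^1\times F(\Delta R) \xrightarrow{\;\lambda\times\mathrm{id}\;} \A^1(\Delta R)\times F(\Delta R) \;=\; (\A^1\times F)(\Delta R) \xrightarrow{\;h_{\Delta R}\;} G(\Delta R).$$
Restriction to $\{i\}\times F(\Delta R)$ for $i=0,1$ factors $h$ through $j_i\times\mathrm{id}$ at every level, yielding exactly $(h_i)_{\Delta R}$. The only real content of the argument is producing the formula for $\lambda$; once it is written down, the simpliciality check and the boundary identifications are routine, so the hard part is simply spotting the right definition.
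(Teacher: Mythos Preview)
Your proof is correct and follows essentially the same route as the paper: reduce to a single elementary $\A^1$-homotopy, produce a simplicial map $\Delta^1 \to \A^1(\Delta R)$, and compose with $h$. The only cosmetic difference is that the paper invokes the Yoneda-style description---the identity $1$-simplex $id \in \A^1(\Delta^1) \cong \A^1(\A^1)$ determines a map $\Delta^1 \to \A^1(\Delta_k) \to \A^1(\Delta_R)$---whereas you write out the resulting formula $\lambda_n(a_0,\ldots,a_n)=\sum_i a_i T_i$ explicitly; these are the same map.
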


\begin{proof}
It suffices to prove the claim for elementary $\A^1$-homotopy.
Let $h:\A^1 \times F \to G$ be an elementary homotopy between $h_0$ and $h_1$.
The $1$-simplex $id \in \A^1(\Delta^1) \cong \A^1(\A^1)$ of the simplicial set $\A^1(\Delta )$ defines a 
map of simplicial sets $\Delta^1 \to \A^1(\Delta _k) \to \A^1(\Delta _R)$ which induces the required homotopy 
$H: \Delta^1 \times F(\Delta _R) \to \A^1(\Delta _R) \times F(\Delta _R) \stackrel{h}{\to} G(\Delta _R)$.
\end{proof}


\end{document}